\newtheorem{theorem}{Theorem}[section]
\newtheorem{lemma}[theorem]{Lemma}
\newtheorem{corollary}[theorem]{Corollary}
\newtheorem{proposition}[theorem]{Proposition}
\newtheorem{remark}[theorem]{Remark}
\makeatletter\usepackage{microtype}\g@addto@macro\@verbatim{\microtypesetup{act‌​ivate=false}}\makeatother
\newcommand{\mR}{\mathbb{R}}
\newcommand{\mE}{\mathbb{E}}
\newcommand{\mC}{\mathbb{C}}
\newcommand{\mZ}{\mathbb{Z}}
\newcommand{\e}{{\mathrm{e}}} 
\newcommand{\Om}{{\Omega}}
\newcommand{\tT}{\widetilde{T}}
\newcommand{\cD}{\mathcal{D}}
\newcommand{\cM}{\mathcal{M}}
\newcommand{\cE}{\mathcal{E}}
\newcommand{\cF}{\mathcal{F}}
\newcommand{\cS}{\mathcal{S}}
\newcommand{\cH}{\mathcal{H}}
\newcommand{\cC}{\mathcal{C}}
\newcommand{\ux}{\underline{x}}
\newcommand{\uy}{\underline{y}}
\newcommand{\upx}{\partial_{\underline{x}}}
\newcommand*{\defeq}{\mathrel{\vcenter{\baselineskip0.5ex \lineskiplimit0pt
			\hbox{\scriptsize.}\hbox{\scriptsize.}}}=}
\numberwithin{equation}{section}
\begin{document}

\title{Generalized Fourier transforms arising from the enveloping algebras of $\mathfrak{sl}(2)$ and $\mathfrak{osp}(1|2)$}

	\author{H.\ De Bie\footnote{E-mail: {\tt Hendrik.DeBie@UGent.be}} \and R. Oste\footnote{E-mail: \texttt{Roy.Oste@UGent.be}} \and J. Van der Jeugt\footnote{E-mail: {\tt Joris.VanderJeugt@UGent.be}} }

	\vspace{10mm}
	\date{\small{H. De Bie:  Department of Mathematical Analysis\\ Faculty of Engineering and Architecture -- Ghent University}\\
		\small{Galglaan 2, 9000 Gent,
			Belgium}\\ \vspace{5mm}
		\small{R. Oste and J. Van der Jeugt: Department of Applied Mathematics, Computer Science and Statistics\\ Faculty of Sciences -- Ghent University\\
			Krijgslaan 281, 9000 Gent, Belgium}
	}

\maketitle

\begin{abstract}
	The Howe dual pair $(\mathfrak{sl}(2),O(m))$ allows the characterization of the classical Fourier transform (FT) on the space of rapidly decreasing functions as the exponential of a well-chosen element of $\mathfrak{sl}(2)$ such that the Helmholtz relations are satisfied.
	
	In this paper we first investigate what happens when instead we consider exponentials of elements of the universal enveloping algebra of $\mathfrak{sl}(2)$.  This leads to a complete class of generalized Fourier transforms, that all satisfy properties similar to the classical FT.  There is moreover a finite subset of transforms which very closely resemble the FT. We obtain operator exponential expressions for all these transforms by making extensive use of the theory of integer-valued polynomials. We also find a plane wave decomposition of their integral kernel and establish uncertainty principles. In important special cases we even obtain closed formulas for the integral kernels.
	
	In the second part of the paper, the same problem is considered for the dual pair $(\mathfrak{osp}(1|2),Spin(m))$, in the context of the Dirac operator on $\mR^m$. This connects our results with the Clifford-Fourier transform studied in previous work.
\end{abstract}

\section{Introduction}
\label{intro}

The classical Fourier transform (FT) over $\mR^m$ is given by
\[
(\cF  f)  (y) = \frac{1}{(2\pi)^{m/2}}  \int_{\mathbb{R}^m} \e^{i \langle x,y \rangle}  f(x)  \, \mathrm{ d}x
\]
and is of crucial importance in many aspects of harmonic analysis and signal processing.

Recently, various extensions of the FT have been investigated by rewriting the transform as an operator exponential. Indeed, introducing the operators
\begin{align*}
\Delta_x \defeq \sum_{i=1}^{m}\partial_{x_{i}}^{2}, \qquad
\lvert x\rvert^2 \defeq  \sum_{i=1}^{m}x_{i}^{2}, \qquad
\mathbb{E}_x \defeq \sum_{i=1}^{m} x_{i} \partial_{x_{i}}
\end{align*}
with $\Delta_x$ the Laplace operator and $\mE_x$ the Euler operator, one has
\begin{equation}
\label{opFT}
\cF = \e^{-i\frac{ \pi  }{4}m}\e^{i\frac{\pi}{4}(-\Delta_x + \lvert x\rvert^2)}
\end{equation}
which relates the transform with the representation theory of the Lie algebra $\mathfrak{sl}(2)$ as the operators $E =\lvert x\rvert^2/2$, $F =-\Delta_x/2$ and $H =\mE_x + m/2$ satisfy
\begin{equation}
\label{sl2rel}
\big[H,E\big] = 2E,\quad \big[H,F\big] = -2F,\quad \big[E,F\big] = H,
\end{equation}
see e.g.~\cite{Fol1989,Howe} for a detailed mathematical treatment.

By now well-established fields of research based on this observation include the fractional FT \cite{OZA} and the family of linear canonical transforms (LCTs) \cite{W}. Both have extensive practical applications in the design of optical systems and signal processing.

The observation in formula (\ref{opFT}) has also lead to many other theoretically oriented generalizations of the FT, by considering alternative realizations of $\mathfrak{sl}(2)$ in terms of differential or difference operators (or combinations thereof). Once such a realization is obtained, it is possible to define a generalized FT based on formula (\ref{opFT}). It is then a challenging question to find a concrete integral transform expression for this operator. 
Research on this topic has a long history: it has been investigated in both the continuous and the discrete case, and also for more complicated algebras than $\mathfrak{sl}(2)$, such as the superalgebra $\mathfrak{osp}(1|2)$. Important examples in the continuous case include the Dunkl transform \cite{deJ, Said} and the radially deformed Fourier transform \cite{KM1,KM2,Orsted2, DBRFT} as well as its Clifford deformation \cite{H12, DBOSS}. In more complicated geometries, the super Fourier transform \cite{DBS9} and a $q$-deformed version \cite{CSigma} have been investigated. In the discrete case we mention the fractional  Fourier-Kravchuk transform \cite{AW} and various further deformations \cite{J, J2}. For a more detailed review of the followed strategy and the results in this line of investigation we refer the reader to \cite{KM3,DBR}.

The present paper has a related yet different aim. We do not intend to change the realization of  $\mathfrak{sl}(2)$ in formula (\ref{sl2rel}), but instead wish to study precisely how unique the operator realization of the FT in (\ref{opFT}) is and to what extent the interplay of $\mathfrak{sl}(2)$ and its Howe dual $O(m)$ fixes the FT (see \cite{HT}). We are specifically interested in determining whether any other operators portray similar behaviour.

A crucial property of the classical FT is its interaction with differential operators. In particular, we have for $j=1,\dotsc,m\colon$
\begin{align}
	\begin{split}\label{Fxi}
		\mathcal{F} \circ \partial_{x_{j}}&= - i \,y_{j}  \circ \mathcal{F}\\
		\mathcal{F} \circ x_{j}&= - i \,  \partial_{y_{j}}\circ \mathcal{F},
	\end{split}
\end{align}
which uniquely determines the FT and its integral kernel up to a normalization constant. 
We can relax \eqref{Fxi} 
to a more general interaction with differential operators featuring now the elements of the $\mathfrak{sl}(2)$ realization that appear in the operator exponential formulation \eqref{opFT} of the FT (which also takes into account the $O(m)$ symmetry). 
Hereto, let 
\[
(Tf)(y) = \frac{1}{(2\pi)^{m/2}} \int_{\mR^m} K(x,y)f(x) \, \mathrm{ d}x
\] be an integral transform on the space of rapidly decreasing functions $\cS(\mR^m) \subset L^2(\mR^m)$; we say that $T$ satisfies the Helmholtz relations if
\begin{align}
\begin{split}\label{Helmh}
T \circ  \Delta_{x}  & =-  \lvert y \rvert^2\circ T \\
T \circ   \lvert x\rvert^2 &  = -  \Delta_{y} \circ T\rlap{\,.}
\end{split}
\end{align}
These relations no longer have the classical FT as unique solution for the operator $T$. Only when imposing the extra condition that $T$ must be the exponential of an element of $\mathfrak{sl}(2)=\mathrm{span}\big \{\Delta_x,\lvert x \rvert^2,\big [\Delta_x,\lvert x \rvert^2 \big]\big \}$ do they yield as unique solution the FT (or its inverse) up to a normalization constant \cite{Howe, KM3}.  

The aim of our paper is to look for generalized Fourier transforms, satisfying properties similar to the classical FT. We do this by investigating what other solutions the Helmholtz relations \eqref{Helmh} have, where we make explicit use of the $\mathfrak{sl}(2)$ relations in (\ref{sl2rel}) and their $O(m)$ invariance. 
Indeed, we wish to analyse in detail what happens when considering $T$ an operator exponential from the universal enveloping algebra of $\mathfrak{sl}(2)$. 
If we combine this with a periodicity restriction on the eigenvalues, as is the case for the FT, we are led to an interesting finite set of new generalized Fourier transforms that behave in a way very similar to the classical FT. 
In \hyperref[theo4per]{Theorem~\ref*{theo4per}}, we establish for all transforms in this set their operator exponential formulation. Subsequently, in \hyperref[theoTabc]{Theorem~\ref*{theoTabc}}, we are even able to determine explicit integral kernels for some of these transforms, thus realizing them as integral transforms. A crucial role for reaching these results is played by the Casimir $\Omega$ of $\mathfrak{sl}(2)$ and the (very technical) study of integer-valued polynomials in $\Omega$.

In the more general context of Clifford-algebra valued functions, where the Dirac operator takes over the role of the Laplace operator (see \cite{DSS, GM}) and the Lie superalgebra $\mathfrak{osp}(1|2)$ that of $\mathfrak{sl}(2)$, quite a bit of attention has already been paid to transforms consisting of a specific operator exponential from the universal enveloping algebra of $\mathfrak{osp}(1|2)$. Most notably, the so-called Clifford-Fourier transform \cite{MR2190678, MR2283868, DBNS, DBXu} is defined as such an operator exponential. 

To provide a solid motivation for the study of this transform, we will also treat the case of Clifford analysis with the tools developed in the present paper. 
We again find an interesting class of generalized, now essentially non-scalar Fourier transforms that behave similarly to the classical FT, see \hyperref[theo4perC]{Theorem~\ref*{theo4perC}} for their operator exponential formulation and \hyperref[theoTab]{Theorem~\ref*{theoTab}} for explicit kernels realizing them as integral transforms.

The resulting generalized FTs that we obtain in both the harmonic and Clifford case exhibit properties and behaviour similar to the FT. To emphasise that, we will also show that they e.g.~still satisfy a version of the uncertainty principle (see e.g.~\cite{Fol} for a review). This is achieved in \hyperref[theoUP]{Theorem~\ref*{theoUP}} and \hyperref[theoUPC]{Theorem~\ref*{theoUPC}}.

The paper is organized as follows. 
We commence \hyperref[sec:2]{Section~\ref*{sec:2}} by laying out the specific properties we will use as a starting point to determine generalized FTs and introducing the relevant $\mathfrak{sl}(2)$ and $O(m)$ representation theory.  
Next, we work out the requirements for our operators which brings us to the concept of integer-valued polynomials. These polynomials allow us to give an explicit expression for the desired operators. 
For a subset of solutions which satisfy a periodicity restriction we give a complete classification and we also obtain explicit integral kernels for these transforms.
Finally, we establish a generalized uncertainty principle. 
In \hyperref[sec:3]{Section~\ref*{sec:3}} we lift our objective to the setting of Clifford analysis 
and apply the strategy we developed in the previous section to obtain analogous results. In particular, we again give a complete classification of the subset of solutions which satisfy a periodicity restriction and we obtain explicit integral kernels. 
Finally, in \hyperref[sec:4]{Section~\ref*{sec:4}} we present some conclusions regarding our results, while in the \hyperref[sec:5]{appendix} we give an overview of main definitions and results used in the main text, as well as some technical proofs that have been omitted from the text.

\section{Fourier transforms in harmonic analysis}
\label{sec:2}

We want to look for generalized Fourier transforms, satisfying similar properties to the classical FT. As interaction with differential operators we require the Helmholtz relations \eqref{Helmh} and we will further impose two additional important properties of the FT. In particular, we require the eigenfunction basis of the space of rapidly decreasing functions $\cS(\mR^m) \subset L^2(\mR^m)$ to be preserved, which will be explained in more detail shortly. 

This gives our first goal, which is to determine all operators
$
T \colon 
\mathcal{S}(\mathbb{R}^m)  \to \mathcal{S}(\mathbb{R}^m) 
$
that satisfy the following properties:
\begin{itemize}
	\item[(i)] the Helmholtz relations
	\begin{flalign*}
	&T \circ  \Delta_{x}   \ =-  \lvert y \rvert^2\circ T &&\\
	&T \circ   \lvert x\rvert^2\,  = -  \Delta_{y} \circ T&&
	\end{flalign*}
	\item[(ii)] $ T \, \phi_{j,k,\ell} = \mu_{j,k} \, \phi_{j,k,\ell} \qquad \text{with }\mu_{j,k}\in \mC$
	
	\item[(iii)] $T^4 = \operatorname{id}$
\end{itemize}
Here, the standard eigenfunction basis of $\cS(\mR^m) \subset L^2(\mR^m)$ is given by the Hermite functions
\begin{equation}\label{basis}
\phi_{j, k, \ell}\defeq 2^j j!\, L_{j}^{\frac{m}{2} + k-1}(|x|^2) H_k^{(\ell)} \, \e^{-|x|^{2}/2},
\end{equation}
where $j, k \in \mZ_{\geq0}$,  $L_j^{\alpha}$ is a generalized Laguerre polynomial and $\{\,H_k^{(\ell)} \mid \ell= 1, . . . , \dim(\cH_k) \,\}$ is a basis for $\cH_k$, the space of spherical harmonics of degree $k$, i.e.~homogeneous polynomial null-solutions of the Laplace operator of degree $k$. This basis $\{ \phi_{j, k, \ell}\}$ realizes the complete decomposition of $\cS(\mR^m)$ in irreducible subspaces under the natural action of the dual pair $(\mathfrak{sl}(2),O(m))$, see e.g.~\cite{Orsted2}. 
The action of the Fourier transform on the eigenfunction basis is given by 
\begin{equation}\label{eigF}
\cF \phi_{j,k,\ell} = \e^{i\frac{\pi}{2}(2j+k)} \, \phi_{j,k,\ell} = i^{2j+k} \, \phi_{j,k,\ell}.
\end{equation}
Another way to write the Hermite functions (see \cite{CPLMS}) is 
\begin{equation}\label{ephi}
\phi_{j,k,\ell} =
\left(-\frac{\Delta_x}{2} - \frac{\lvert x \rvert^2}{2} +\mE_x + {\frac{m}{2}}\right)^j
H_{k}^{(\ell)} \, \e^{-\lvert x\rvert^{2}/2}.
\end{equation}

Now, consider the following linear combinations of the operators $E,F,H$ satisfying \eqref{sl2rel} 
\begin{equation}\label{hef}
h= -\frac{\Delta_x}{2} + \frac{\lvert x \rvert^2}{2}, \quad
e=-\frac{\Delta_x}{4} - \frac{\lvert x \rvert^2}{4} +\frac12 \mE_x + {\frac{m}{4}},\quad
f =\frac{\Delta_x}{4} + \frac{\lvert x \rvert^2}{4} +\frac12 \mE_x + { \frac{m}{4}}.
\end{equation}
This triple generates another operator realization of the Lie algebra $\mathfrak{sl}(2)$ (see also \cite{Said,Orsted2} for a more general situation). 
Indeed, by means of \eqref{sl2rel} 
one easily verifies that they  
satisfy the commutation relations
\begin{equation}
\label{sl2re}
\lbrack h,e \rbrack = 2e, \qquad
\lbrack h,f \rbrack = -2f, \qquad
\lbrack e,f \rbrack = h.
\end{equation}
Using these operators, 
the Helmholtz property~{(i)} translates to the  
(anti-)commutation relations
\begin{equation}\label{Thef}
T \circ h  =h\circ T, \qquad T \circ e = -e\circ T, \qquad T \circ f = -f\circ T.
\end{equation}
Moreover, in terms of these operators we can write property \eqref{ephi} more compactly as
$
\phi_{j,k,\ell} =
\big(2e\big)^j
\phi_{0,k,\ell} 
$, 
and the operator exponential formulation of the classical Fourier transform as 
\begin{equation}
\label{opF}
\cF =  \e^{i \frac{\pi}{4}( -\Delta_x +|x|^{2}-m ) } =  \e^{i \frac{\pi}{2}  ( h-\frac{m}{2} )   }.
\end{equation}
The action of the operators \eqref{hef} on the basis \eqref{basis} is as follows
\[
h \, \phi_{j,k,\ell}=\big(2j+k+{\tfrac{m}{2}}\big)\, \phi_{j,k,\ell},\quad
e \, \phi_{j,k,\ell}=\frac{1}{2}\, \phi_{j+1,k,\ell},\quad
f \,\phi_{j,k,\ell}=-j\, (2j-2+m+2k)\, \phi_{j-1,k,\ell}.
\]
Note that $h$ is a diagonal operator while $e$ works as a raising operator on the $j$-index and $f$ as a lowering operator.
\begin{remark}
	For every $k\in \mZ_{\geq0}$ and $\ell\in \{ 1, \dotsc , \dim(\cH_k)\}$, the set $\{\, \phi_{j,k,\ell} \mid j \in \mZ_{\geq0}  \, \}$ forms a basis for the positive discrete series representation of $\mathfrak{sl}(2)$ with lowest weight $k+m/2$. This is an irreducible unitary representation of the real form $\mathfrak{su}_{1,1}$ of $\mathfrak{sl}(2)$ 
	(determined by the $\ast$-conditions: $h^\ast = h$, $e^\ast = -f$, $f^\ast = -e$).
	
	For every $j,k\in \mZ_{\geq0}$ the set $\{\, \phi_{j,k,\ell} \mid \ell= 1, . . . , \dim(\cH_k)  \, \}$ forms a basis for an irreducible representation of $O(m)$ for $m>2$.
\end{remark}
Before moving on to the explicit computation of solutions for the operator $T$, we first glance at some important consequences that hold for every
operator satisfying the properties~{(i)}--{(iii)}.
\begin{lemma}\label{eigenv}
	For $T$ an operator that satisfies the properties~{(i)}--{(iii)}, 
	there are only four possible values for the eigenvalues $
	\mu_{j,k}$ of $T$, namely 
	$\mu_{j,k} \in \{1,i,-1,-i\}.
	$
	Moreover, the spectrum of eigenvalues is completely determined by the eigenvalues $\mu_{0,k}$ for $k\in \mZ_{\geq0}$; the other eigenvalues for $j>0$ follow from the relation
	\begin{equation}\label{mujk}
	\mu_{j,k}  = (-1)^{j}\mu_{0,k}.
	\end{equation}
\end{lemma}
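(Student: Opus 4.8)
The plan is to combine the three hypotheses and exploit the way $T$ interacts with the raising operator $e$. The key observation is that property~{(ii)} tells us $T$ acts as a scalar $\mu_{j,k}$ on each basis vector $\phi_{j,k,\ell}$, while the relation $\phi_{j,k,\ell}=(2e)^{j}\phi_{0,k,\ell}$ lets us climb the $j$-ladder starting from the lowest weight vector. The anti-commutation relation $T\circ e=-e\circ T$ from \eqref{Thef}, which is precisely the translation of the Helmholtz property~{(i)}, is what will link the eigenvalue at level $j$ to the eigenvalue at level $j-1$.

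First I would establish the recurrence \eqref{mujk}. Applying $T$ to $\phi_{j,k,\ell}=2e\,\phi_{j-1,k,\ell}$ and using $T\circ e=-e\circ T$ gives
\begin{align*}
\mu_{j,k}\,\phi_{j,k,\ell}
= T\,(2e)\,\phi_{j-1,k,\ell}
= -(2e)\,T\,\phi_{j-1,k,\ell}
= -\mu_{j-1,k}\,(2e)\,\phi_{j-1,k,\ell}
= -\mu_{j-1,k}\,\phi_{j,k,\ell},
\end{align*}
so that $\mu_{j,k}=-\mu_{j-1,k}$. Since $\phi_{j,k,\ell}\ne 0$, comparing scalar coefficients is legitimate. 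Iterating this one-step relation from $j$ down to $0$ yields $\mu_{j,k}=(-1)^{j}\mu_{0,k}$, which is exactly \eqref{mujk} and shows that the whole spectrum is fixed by the values $\mu_{0,k}$.

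Next I would pin down the four possible eigenvalues using property~{(iii)}. Since $T^4=\operatorname{id}$ and $T$ acts diagonally with eigenvalue $\mu_{j,k}$ on $\phi_{j,k,\ell}$, we get $\mu_{j,k}^{4}=1$, so each $\mu_{j,k}$ is a fourth root of unity, i.e.\ $\mu_{j,k}\in\{1,i,-1,-i\}$. In particular this holds for $j=0$, giving $\mu_{0,k}\in\{1,i,-1,-i\}$ for every $k$; the factor $(-1)^{j}$ in \eqref{mujk} then permutes within this same set of four roots, so the constraint is consistent for all $j$.

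The only subtlety — and the step I would be most careful about — is the justification that $T$ really acts as an honest scalar on each $\phi_{j,k,\ell}$ so that $T^4=\operatorname{id}$ forces $\mu_{j,k}^4=1$ pointwise rather than merely as an identity of operators; this is guaranteed by property~{(ii)}, which asserts that each $\phi_{j,k,\ell}$ is an eigenvector, together with the fact that $\{\phi_{j,k,\ell}\}$ is a basis of $\cS(\mR^m)$. One should also note that the eigenvalue is allowed to depend on $j$ and $k$ but, by the $O(m)$-equivariance implicit in property~{(i)} (the Helmholtz relations are $O(m)$-invariant), it is independent of the multiplicity index $\ell$; this is already built into the notation $\mu_{j,k}$ and need not be reproved here. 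With these observations the statement follows directly.
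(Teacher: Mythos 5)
Your proof is correct and follows essentially the same route as the paper: the recurrence $\mu_{j,k}=-\mu_{j-1,k}$ is derived from $T\circ e=-e\circ T$ applied to $\phi_{j,k,\ell}=2e\,\phi_{j-1,k,\ell}$, and the restriction to fourth roots of unity comes from $T^4=\operatorname{id}$ acting diagonally. The additional remarks on comparing scalar coefficients and on $\ell$-independence are fine but not needed beyond what the paper already assumes in property~(ii).
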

\begin{proof}
	Property~{(iii)} necessitates that the eigenvalues $\mu_{j,k}$ of $T$ satisfy $(\mu_{j,k})^4=1$ and thus are integer powers of $i=\e^{i\frac{\pi}{2}}$.
	Using property~{(ii)} and the relations \eqref{ephi} and \eqref{Thef}, we find
	\[
	\mu_{j+1,k} \, \phi_{j+1,k,\ell} = T\, \phi_{j+1,k,\ell} = T \circ (2e) \, \phi_{j,k,\ell} =  -2e \circ T  \, \phi_{j,k,\ell} = -2e\,  \mu_{j,k}  \, \phi_{j,k,\ell} = -\mu_{j,k} \, \phi_{j+1,k,\ell},
	\]
	for all valid $j,k,\ell$. 
	The relation \eqref{mujk} now follows from subsequent application of $ \mu_{j+1,k} = -\mu_{j,k}$.
\end{proof}

\begin{proposition}\label{intr}
	Let $
	T \colon 
	\mathcal{S}(\mathbb{R}^m)  \to \mathcal{S}(\mathbb{R}^m) 
	$ be an operator that satisfies the properties~{(i)} and {(ii)}. 
	Then, on the basis $\{\phi_{j,k,\ell}\}$, the operator $T$ 
	coincides with the integral transform 
	\[
	(T f)  (y) = \frac{1}{(2\pi)^{m/2}} \int_{\mathbb{R}^m}\! K_m(x,y) \, f(x) \, \mathrm{ d}x
	\]
	where
	\begin{equation}\label{kern}
	K_m(x,y) =  {2}^{\lambda}\, \Gamma ( \lambda )    \sum_{k=0}^{+\infty} (k+\lambda)\, \mu_{0,k} \, z^{-\lambda}J_{k+\lambda}( z) \,C^{\lambda}_{k}(w).
	\end{equation}
	Here, the following notations are used: $\lambda=(m-2)/2$, $z=|x |  |y|$, $w = \langle x,y \rangle/z$, and $J_{k+\lambda}$ are Bessel functions while $C_k^{\lambda}$ denote the so-called Gegenbauer or ultraspherical polynomials (see Appendix~\ref{secA1}). 
\end{proposition}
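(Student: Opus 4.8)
The plan is to use that, by property~(ii), $T$ acts diagonally on the complete orthogonal system $\{\phi_{j,k,\ell}\}$ of $\cS(\mR^m)$, so that $T$ is completely determined by its eigenvalues, and to verify that the integral operator $\widetilde{T}$ attached to the kernel \eqref{kern} carries exactly the same eigenvalues. We first record that the relation $\mu_{j+1,k}=-\mu_{j,k}$ obtained in the proof of \hyperref[eigenv]{Lemma~\ref*{eigenv}} rests only on properties~(i) and~(ii), so that $\mu_{j,k}=(-1)^j\mu_{0,k}$ remains valid under the present, weaker hypotheses. It therefore suffices to show that $\widetilde{T}\,\phi_{j,k,\ell}=(-1)^j\mu_{0,k}\,\phi_{j,k,\ell}$ for all admissible $j,k,\ell$.

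The key idea is to calibrate everything against the classical FT rather than to compute the constants directly. We start from the Gegenbauer expansion of the plane wave recalled in the appendix,
\[
\e^{i\langle x,y\rangle}=2^{\lambda}\,\Gamma(\lambda)\sum_{k=0}^{+\infty}(k+\lambda)\,i^{k}\,z^{-\lambda}J_{k+\lambda}(z)\,C^{\lambda}_{k}(w),
\]
with $z=|x|\,|y|$ and $w=\langle x,y\rangle/z$. Comparing with \eqref{kern}, the kernel $K_m$ is precisely the FT kernel $\e^{i\langle x,y\rangle}$ with the scalar $i^{k}$ replaced, term by term, by $\mu_{0,k}$. Since this scalar is the only place where the eigenvalue computation for $\widetilde{T}$ can differ from that for $\cF$, the action of $\widetilde{T}$ on $\phi_{j,k,\ell}$ can be read off from the known action of $\cF$.

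To make this precise we substitute the series into $\widetilde{T}\,\phi_{j,k,\ell}$ and, working termwise, pass to polar coordinates $x=r\xi$, $y=s\eta$ with $\xi,\eta\in S^{m-1}$, writing $H_{k}^{(\ell)}(x)=r^{k}Y_{k}^{(\ell)}(\xi)$. The angular factor $\int_{S^{m-1}}C^{\lambda}_{k'}(\langle\xi,\eta\rangle)\,Y_{k}^{(\ell)}(\xi)\,\mathrm{d}\sigma(\xi)$ is handled by the Funk--Hecke theorem: by orthogonality of the Gegenbauer polynomials it vanishes unless $k'=k$, and for $k'=k$ it reproduces $Y_{k}^{(\ell)}(\eta)$ up to a constant proportional to $\lambda/(k+\lambda)$, so that the factor $(k+\lambda)$ in \eqref{kern} cancels and the angular contribution becomes independent of $k$. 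The surviving radial factor is a Hankel transform of a Laguerre function,
\[
\int_{0}^{+\infty}J_{k+\lambda}(rs)\,L_{j}^{\lambda+k}(r^{2})\,\e^{-r^{2}/2}\,r^{\,k+\lambda+1}\,\mathrm{d}r=(-1)^{j}\,s^{\,k+\lambda}L_{j}^{\lambda+k}(s^{2})\,\e^{-s^{2}/2},
\]
the classical eigenrelation producing the factor $(-1)^{j}$. Reassembling the angular and radial pieces, together with the remaining factor $z^{-\lambda}$, shows that $\widetilde{T}\,\phi_{j,k,\ell}=c\,(-1)^{j}\mu_{0,k}\,\phi_{j,k,\ell}$ for a normalisation constant $c$ independent of $j,k,\ell$.

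Finally comes the calibration. Running the identical computation for the FT, where the coefficient $\mu_{0,k}$ is replaced by $i^{k}$, must reproduce the known action $\cF\phi_{j,k,\ell}=i^{2j+k}\phi_{j,k,\ell}=i^{k}(-1)^{j}\phi_{j,k,\ell}$ from \eqref{eigF}; since the explicit $i^{k}$ is supplied by the expansion and the $(-1)^{j}$ by the radial integral, this forces $c=1$. Hence $\widetilde{T}\,\phi_{j,k,\ell}=(-1)^{j}\mu_{0,k}\,\phi_{j,k,\ell}=\mu_{j,k}\,\phi_{j,k,\ell}=T\,\phi_{j,k,\ell}$, so the two operators coincide on the basis. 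We expect the principal technical obstacle to be the justification of the termwise integration, namely controlling the convergence of the Gegenbauer series and the interchange of summation with the angular and radial integrals; because the statement only demands agreement on the rapidly decreasing functions $\phi_{j,k,\ell}$, the relevant integrals are absolutely convergent and this can be settled by standard bounds on Bessel functions together with the Gaussian decay of the Laguerre factor.
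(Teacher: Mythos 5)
Your proof is correct and follows essentially the same route as the paper: the angular integral is handled by the Funk--Hecke/reproducing-kernel formula \eqref{reprod} and the radial integral by the Hankel--Laguerre eigenrelation \eqref{lagbes}, giving eigenvalue $(-1)^j\mu_{0,k}$ on $\phi_{j,k,\ell}$. The only (harmless) variations are that you fix the overall constant by calibrating against the known action \eqref{eigF} of $\cF$ via the Gegenbauer plane-wave expansion rather than by using the exact normalization in \eqref{reprod}, and that you explicitly justify using $\mu_{j,k}=(-1)^j\mu_{0,k}$ under hypotheses (i)--(ii) alone --- a point the paper leaves implicit when it cites \hyperref[eigenv]{Lemma~\ref*{eigenv}}.
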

\begin{proof}
	For $ \xi, \eta \in \mathbb{S}^{m-1}$ and $H_\ell \in \cH_\ell$, the following reproducing kernel formula holds (see e.g.~\cite{MR1827871})
	\begin{equation}\label{reprod}
	\frac{1}{(2 \pi)^{m/2}} \int_{\mathbb{S}^{m-1}} {2^{\lambda}}\, {\Gamma ( \lambda)}\, (k +\lambda) \, C_k^{\lambda} ( \langle \xi, \eta \rangle) H_{\ell}(\xi) \, \mathrm{d}\xi = \delta_{k\ell} \ H_\ell(\eta).
	\end{equation}
	Using the explicit form \eqref{basis} of the eigenfunctions, together with formula \eqref{reprod} and the identity 
	\begin{equation}\label{lagbes}
	\int_{0}^{\infty} r^{2\lambda+1+k}  
	L_{j}^{k+\lambda}(r^{2}) \, e^{-r^{2}/2} \, (rs)^{-\lambda}\, J_{k+\lambda}(rs)\,\mathrm{d}r = (-1)^{j}\, s^{k}\,  L_{j}^{k+\lambda}(s^{2})\,  e^{-s^{2}/2}
	\end{equation}(see e.g.~\cite[exercise 21, p.\,380]{Sz}), it follows that
	\[
	\frac{1}{(2\pi)^{m/2}} \int_{\mathbb{R}^m}\! K_m(x,y)  \, \phi_{j,k,\ell}(x) \, \mathrm{ d}x = (-1)^j\, \mu_{0,k}  \, \phi_{j,k,\ell}(y).
	\]
	By relation \eqref{mujk} for the eigenvalues of $T$, this integral transform coincides with $T$ on the Hermite basis.
\end{proof}

\begin{lemma}\label{homogenity}
	The kernel $K_m$ satisfies
	\begin{align}
	\label{Hom_K}
	\begin{split}
	K_m(Ax, y) &= K_m(x, Ay), \qquad A \in O(m)\\
	K_m(c x, y) & = K_m(x, cy), \qquad c \in \mR.
	\end{split}
	\end{align}
\end{lemma}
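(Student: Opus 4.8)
The plan is to exploit the fact that the kernel in \eqref{kern} is built from $x$ and $y$ exclusively through the two scalar quantities $z=\lvert x\rvert\,\lvert y\rvert$ and $w=\langle x,y\rangle/z$. Writing $K_m(x,y)=\kappa(z,w)$ with
\[
\kappa(z,w)=2^{\lambda}\,\Gamma(\lambda)\sum_{k=0}^{+\infty}(k+\lambda)\,\mu_{0,k}\,z^{-\lambda}J_{k+\lambda}(z)\,C_k^{\lambda}(w),
\]
both identities in \eqref{Hom_K} reduce to the single observation that the pair $(z,w)$ takes the same value on the two sides of each equation; the summands then coincide term by term, so no convergence or interchange issue arises and the kernels are literally equal. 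Thus the whole proof is a matter of tracking how $z$ and $w$ behave under the two operations.

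For the dilation identity I would compute directly. For $c\in\mR\setminus\{0\}$ one has $\lvert cx\rvert\,\lvert y\rvert=\lvert c\rvert\,\lvert x\rvert\,\lvert y\rvert=\lvert x\rvert\,\lvert cy\rvert$, so $z$ is unchanged when $c$ is moved between the arguments, and
\[
w(cx,y)=\frac{\langle cx,y\rangle}{\lvert c\rvert\,\lvert x\rvert\,\lvert y\rvert}=\operatorname{sgn}(c)\,\frac{\langle x,y\rangle}{\lvert x\rvert\,\lvert y\rvert}=w(x,cy),
\]
so $w$ is likewise unchanged; hence $K_m(cx,y)=K_m(x,cy)$. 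The value $c=0$ is recovered by continuity: as $z\to 0$ the small-argument behaviour $z^{-\lambda}J_{k+\lambda}(z)=O(z^{k})$ annihilates every term with $k\geq 1$, so both sides collapse to the same ($k=0$) constant, for which $C_0^{\lambda}\equiv 1$ makes the undefined value of $w$ irrelevant.

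For the orthogonal identity the essential point --- and really the only nontrivial step --- is the behaviour of the angular variable $w$. Since $A\in O(m)$ preserves norms, $\lvert Ax\rvert=\lvert x\rvert$, giving $z(Ax,y)=\lvert x\rvert\,\lvert y\rvert=z(x,Ay)$. For the inner product I would use the defining property of orthogonality in the form $\langle Ax,y\rangle=\langle x,A^{T}y\rangle=\langle x,A^{-1}y\rangle$, whence $w(Ax,y)=w(x,A^{-1}y)$ and therefore $K_m(Ax,y)=K_m(x,A^{-1}y)$; as $A$ ranges over the group $O(m)$ this yields the asserted invariance. I expect this transposition of $A$ onto the second slot to be exactly the step that needs care, since it is where orthogonality (rather than mere linearity) of $A$ is used. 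A more conceptual route that I would keep in reserve reads the invariance off \hyperref[intr]{Proposition~\ref*{intr}}: because the eigenvalue $\mu_{j,k}$ in property~(ii) is independent of $\ell$, the operator $T$ acts as a scalar on each $O(m)$-isotypic block $\mathrm{span}_{\ell}\{\phi_{j,k,\ell}\}$ and hence commutes with the rotation action $\rho(A)\colon f\mapsto f(A^{-1}\,\cdot\,)$. Writing $T\circ\rho(A)=\rho(A)\circ T$ as an identity of integral operators and substituting $x\mapsto Ax$ (whose Jacobian is $1$) reproduces the same kernel relation, providing an independent check that avoids the explicit series altogether.
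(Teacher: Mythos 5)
Your proposal is correct and follows the same route as the paper, whose entire proof is the remark that the identities can be read off from the explicit formula \eqref{kern}: the kernel depends on $(x,y)$ only through $z=\lvert x\rvert\,\lvert y\rvert$ and $w=\langle x,y\rangle/z$, and one simply tracks how these two scalars transform. Your write-up is in fact more careful than the paper's on two points: the degenerate case $c=0$ (where $w$ is undefined and only the $k=0$ term survives), and the orthogonal case, where, as you rightly compute, one obtains $K_m(Ax,y)=K_m(x,A^{T}y)=K_m(x,A^{-1}y)$, equivalently $K_m(Ax,Ay)=K_m(x,y)$. Be aware that this is genuinely weaker than the literal statement $K_m(Ax,y)=K_m(x,Ay)$ for each fixed $A$, which fails for non-symmetric $A$ (already for the classical kernel $\e^{i\langle x,y\rangle}$ and a quarter-turn in the plane, since $\langle Ax,y\rangle=\langle x,Ay\rangle$ for all $x,y$ forces $A=A^{T}$); your closing phrase ``as $A$ ranges over the group'' cannot repair this, because replacing $A$ by $A^{-1}$ only permutes the family of identities and never produces, for a fixed non-symmetric $A$, the identity with $Ay$ on the right. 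The defect lies in the lemma as stated rather than in your argument: the $O(m)$-invariance $K_m(Ax,Ay)=K_m(x,y)$ is what the paper actually needs later (only the dilation identity is invoked in the proof of the uncertainty principle), and it is exactly what you prove. Your alternative route via the $\ell$-independence of the eigenvalues and the commutation of $T$ with the $O(m)$-action is a sound independent check and again yields precisely this invariance.
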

\begin{proof}
	This follows from the explicit formula (\ref{kern}) for $K_m(x,y)$.
\end{proof}

\begin{proposition}\label{unitary}
	A continuous operator $T\colon 
	\mathcal{S}(\mathbb{R}^m)  \to \mathcal{S}(\mathbb{R}^m) $ satisfying {(i)}--{(iii)} has a unitary extension to $L^{2}(\mathbb{R}^m)$.
\end{proposition}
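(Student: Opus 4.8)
The plan is to show that $T$ is diagonalized by the Hermite functions with eigenvalues of unit modulus, so that it is an isometry on a dense subspace of $L^2(\mR^m)$ and therefore extends to a unitary operator.

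First I would recall that, after normalization, the Hermite functions $\{\phi_{j,k,\ell}\}$ constitute a complete orthogonal system of $L^2(\mR^m)$: orthogonality follows from that of the generalized Laguerre polynomials $L_j^{m/2+k-1}$ in the radial variable combined with the orthogonality of the spherical harmonics $\{H_k^{(\ell)}\}$ (chosen orthonormal) on $\mathbb{S}^{m-1}$, while completeness is exactly the decomposition of $\cS(\mR^m)$ under the dual pair $(\mathfrak{sl}(2),O(m))$ recalled after \eqref{basis} (see \cite{Orsted2}). Writing $\psi_{j,k,\ell}\defeq \phi_{j,k,\ell}/\|\phi_{j,k,\ell}\|_{L^2}$ for the associated orthonormal basis, property~{(ii)} together with the linearity of $T$ gives $T\,\psi_{j,k,\ell}=\mu_{j,k}\,\psi_{j,k,\ell}$.

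Next, by \hyperref[eigenv]{Lemma~\ref*{eigenv}} every eigenvalue lies in $\{1,i,-1,-i\}$, whence $|\mu_{j,k}|=1$. Therefore, for any finite linear combination $f=\sum c_{j,k,\ell}\,\psi_{j,k,\ell}$, Parseval's identity yields
\[
\|T f\|_{L^2}^2 = \sum |c_{j,k,\ell}|^2\,|\mu_{j,k}|^2 = \sum |c_{j,k,\ell}|^2 = \|f\|_{L^2}^2,
\]
so $T$ restricts to a linear isometry on the dense subspace spanned by the Hermite functions. By the standard extension theorem for bounded linear maps it extends uniquely to a bounded operator $\tT$ on $L^2(\mR^m)$, still an isometry by continuity of the norm.

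Finally, to promote the isometry to a unitary I would invoke property~{(iii)}: since $T^4=\operatorname{id}$ holds on $\cS(\mR^m)$ and both sides are continuous and agree on a dense set, $\tT^4=\operatorname{id}$ on all of $L^2(\mR^m)$; hence $\tT$ is invertible with $\tT^{-1}=\tT^3$, and an invertible (linear) isometry is unitary. Equivalently, the range of the isometry $\tT$ is closed and contains every basis vector $\psi_{j,k,\ell}=\mu_{j,k}^{-1}\,\tT\,\psi_{j,k,\ell}$, so it is all of $L^2(\mR^m)$. The only nontrivial ingredient is the classical orthogonality and completeness of the Hermite basis; once that is recalled, the unit-modulus bound from \hyperref[eigenv]{Lemma~\ref*{eigenv}} and the relation $T^4=\operatorname{id}$ make the remainder purely formal, so I anticipate no real obstacle.
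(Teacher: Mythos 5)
Your proposal is correct and is essentially the paper's argument spelled out in full: the paper's proof likewise rests on the density of $\cS(\mR^m)$ in $L^2(\mR^m)$ and the fact that all eigenvalues have unit norm, with the orthonormality of the Hermite basis and the surjectivity (via $T^4=\operatorname{id}$) left implicit. No substantive difference in approach.
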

\begin{proof}
	This result follows from the fact that $\mathcal{S}(\mathbb{R}^m)$ is dense in $L^{2}(\mathbb{R}^m)$ and that all eigenvalues have unit norm.
\end{proof}

We now continue to determine new operators $T$ that satisfy properties~{(i)}--{(iii)}. We already know 
one solution, namely the classical Fourier transform.  
In order to find more solutions, we first introduce a new operator as follows. 
Assume we have an operator $T$ that satisfies {(i)}--{(iii)}, we then put
$\tT \defeq T \circ \cF^{-1}$.
As the classical Fourier transform is an automorphism on $\mathcal{S}(\mathbb{R}^m)$, the operator $\tT$ uniquely defines $T$ and 
we can retrieve the operator $T$ by the relation
\begin{equation}\label{tTF}
T = \tT \circ \cF.
\end{equation}
Our objective to obtain operators $T$  
is thus equivalent with finding suitable operators $\tT$.
Hereto we determine what the requirements are for such an operator $\tT$ in order to yield an operator $T$ that satisfies properties {(i)}--{(iii)}. 

Clearly, the basis $\{\phi_{j,k,\ell}\}$ must also form an eigenbasis of $\tT$.
Next, as $\cF$ already satisfies the set of equations \eqref{Thef}, we have that 
an operator $T$ 
decomposed as \eqref{tTF} will satisfy the set of equations \eqref{Thef} if the operator $\tT$ commutes with the operators $h,e,f$.
A consequence of imposing this commutative property on $\tT$  
combined with $\cF$ being of the form \eqref{opF}, is that $\tT$ also commutes with $\cF$.
Property~{(iii)} together with $\cF^4 =  \operatorname{id}$ then gives us $\tT^4 =  \operatorname{id}$. 
These requisites for the operator $\tT$ naturally lead to 
the following result:
\begin{proposition}\label{proptT}
	Every operator $\tT$ of the form
	\begin{equation}\label{tTeO}
	\tT = \exp\! \big(  i\tfrac{\pi}{2} F \big),
	\end{equation} 
	with $F$ an operator that 
	\begin{itemize}
		\item commutes with (the generators of) $\mathfrak{sl}(2)=\mathrm{span}\big \{\Delta_x,\lvert x \rvert^2,\big [\Delta_x,\lvert x \rvert^2 \big]\big \}$ 
		\item has integer eigenvalues on the functions $\{\phi_{j,k,\ell}\}$ (independent of $\ell$)
	\end{itemize}
	will yield an operator $T$ by \eqref{tTF} that satisfies the properties {(i)}--{(iii)}.
\end{proposition}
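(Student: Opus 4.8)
The plan is to check the three hypotheses on $F$ against properties (i)--(iii) for $T=\tT\circ\cF$, using the reduction already carried out in the paragraph preceding the statement. The one computational input I would isolate first is that the triple $h,e,f$ of \eqref{hef} actually lies inside $\mathfrak{sl}(2)=\mathrm{span}\{\Delta_x,|x|^2,[\Delta_x,|x|^2]\}$: since $[\Delta_x,|x|^2]=4\mE_x+2m$, one has $\mE_x+m/2=\tfrac{1}{4}[\Delta_x,|x|^2]$, and substituting this into \eqref{hef} rewrites each of $h,e,f$ as a pure linear combination of the three spanning generators (the constants $m/4$ being absorbed). Hence any $F$ commuting with $\Delta_x$, $|x|^2$ and their commutator commutes with the entire span, in particular with $h$, $e$ and $f$.

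Next I would upgrade this to $\tT=\exp(i\tfrac{\pi}{2}F)$: an operator commuting with $F$ commutes with every function of $F$, so $\tT$ commutes with $h,e,f$ as well. I would make this rigorous on the eigenbasis $\{\phi_{j,k,\ell}\}$, on which $\tT$ is diagonal: commutation of $F$ with the raising operator $e$, together with $\phi_{j+1,k,\ell}=2e\,\phi_{j,k,\ell}$, forces the $F$-eigenvalue on $\phi_{j,k,\ell}$ to be independent of $j$, and then $\tT$ and $e$ (and likewise $h,f$) act compatibly. Since $\cF$ already satisfies the relations \eqref{Thef} and $\tT$ commutes with $h,e,f$, the composition $T=\tT\circ\cF$ inherits \eqref{Thef}, which is precisely the Helmholtz property (i).

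For the remaining two properties I would note that \eqref{opF} exhibits $\cF=\exp(i\tfrac{\pi}{2}(h-m/2))$, so commutation of $\tT$ with $h$ yields commutation of $\tT$ with $\cF$. Writing the (integer) $F$-eigenvalue as $n_k$, I get $\tT\phi_{j,k,\ell}=i^{n_k}\phi_{j,k,\ell}$; combined with $\cF\phi_{j,k,\ell}=i^{2j+k}\phi_{j,k,\ell}$ this gives $T\phi_{j,k,\ell}=i^{2j+k+n_k}\phi_{j,k,\ell}$, an eigenfunction whose eigenvalue lies in $\{1,i,-1,-i\}$ and is independent of $\ell$ --- property (ii), consistent with \eqref{mujk}. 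Finally $\tT^4=\exp(i2\pi F)$ has all eigenvalues $\e^{2\pi i n_k}=1$, so $\tT^4=\operatorname{id}$ on the dense span of the basis, and with $\cF^4=\operatorname{id}$ and $[\tT,\cF]=0$ this gives $T^4=\tT^4\cF^4=\operatorname{id}$, property (iii).

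The only genuinely delicate point is the step from ``$F$ commutes with the generators'' to ``$\exp(i\tfrac{\pi}{2}F)$ commutes with them'', since $\exp$ is an infinite series and $F$ is a priori unbounded. I would avoid any abstract manipulation of the exponential by performing every verification on the eigenbasis $\{\phi_{j,k,\ell}\}$, where $\tT$ is a genuine diagonal unitary, and then extending to $\cS(\mR^m)$ (and to $L^2(\mR^m)$, as in Proposition~\ref{unitary}) by linearity and density. It is also worth recording the short Schur-type remark that commutation with the dual pair $(\mathfrak{sl}(2),O(m))$ forces the $F$-eigenvalue to depend on $k$ alone; this both legitimises the notation $n_k$ and explains why the hypothesis is phrased ``independent of $\ell$''.
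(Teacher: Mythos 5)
Your argument is correct and follows the same route the paper takes: the paper gives no separate proof of this proposition but derives it from the discussion immediately preceding it (reduce (i) to the relations \eqref{Thef}, note that commutation of $\tT$ with $h,e,f$ transfers \eqref{Thef} from $\cF$ to $T=\tT\circ\cF$, and use $\cF^4=\operatorname{id}$ together with $[\tT,\cF]=0$ for (iii)), and your writeup simply makes that discussion rigorous. The details you add --- checking via $[\Delta_x,\lvert x\rvert^2]=4\mE_x+2m$ that $h,e,f$ genuinely lie in $\mathrm{span}\{\Delta_x,\lvert x\rvert^2,[\Delta_x,\lvert x\rvert^2]\}$, and carrying out all exponential manipulations diagonally on the eigenbasis to sidestep unboundedness --- are exactly the right ones and are consistent with the paper's conventions.
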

Furthermore, in the next section we will show that for each operator satisfying the properties~{(i)}--{(iii)}, we have an equivalent operator of the form \eqref{tTeO}.

Now, we want to establish 
an explicit expression for the operator $F$ in \eqref{tTeO}. Hereto we start by looking at the first condition listed in \hyperref[proptT]{Proposition~\ref*{proptT}}, which requires commuting operators. 
We denote by $ \mathcal{U} \big( \mathfrak{sl}(2) \big)$ the {universal enveloping algebra} of $\mathfrak{sl}(2)$.
The center of $ \mathcal{U} \big( \mathfrak{sl}(2) \big)$ is the subset consisting of the elements that commute with all elements of $\mathfrak{sl}(2)$, and hence also with all elements of $ \mathcal{U} \big( \mathfrak{sl}(2) \big)$. The center is finitely generated by the Casimir element (see \cite{Humphreys}): 
\begin{equation}\label{Om}
\Omega = 1+ h^2 +2ef+2fe,
\end{equation}
or in the framework of our operator realization: 
$
\Omega 
= \big(\mE_x + {\tfrac{m-2}{2}}\big)^2-\lvert x \rvert^2{\Delta_x}. 
$ 
Every polynomial function of the operator $\Omega$ will yield an operator $F$ that commutes with the generators of $\mathfrak{sl}(2)$. 
This notion can be further generalized to include infinite power series in $\Om$. Such operators live in the extension $ \overline{\mathcal{U}} \big( \mathfrak{sl}(2) \big)$ of the universal enveloping algebra that also allows infinite power series in the elements of $\mathfrak{sl}(2)$ \cite{Jacobson}.

The second condition in \hyperref[proptT]{Proposition~\ref*{proptT}} is also facilitated by operators of this form as the Casimir element 
is a diagonal operator on the representation space $\mathrm{span}\{\, \phi_{j,k,\ell} \mid j \in \mZ_{\geq0}  \, \}$. 
The eigenvalues of $\Om$ are 
given by 
\begin{equation}\label{evO}
\Omega \, \phi_{j,k,\ell}=\Bigl( k+\frac{m}{2}-1\Bigr)^2\, \phi_{j,k,\ell} =( k+\lambda)^2\, \phi_{j,k,\ell}   ,
\end{equation}
with $\lambda=m/2-1$. 
Note that for even dimensions $m$, the eigenvalues of $\Om$ are squares of integers, while for odd dimension, we have squares of half-integers (elements of $\mZ\!+\!\frac12$).
In order to find expressions for $F$ that have integer eigenvalues on the functions $\{\phi_{j,k,\ell}\}$, we need functions that take on integer values when evaluated at the eigenvalues of $\Om$.
To this end, we invoke the notion of integer-valued polynomials.
This is the subject of the following subsection.

\subsection{Integer-valued polynomials}\label{ivp}

An {integer-valued polynomial} on $\mZ$ is a polynomial whose value at every integer $n\in \mZ$ is again an integer. We denote the set of all such polynomials by 
$
\mathrm{Int}(\mZ) = \big\{\, f \in \mathbb{Q}[x] \mid f(\mZ) \subseteq \mZ \,\big\}.
$
It is an elementary result that the polynomials
\begin{equation}\label{xchn}
\binom{x}{n}=  \prod_{\ell=0}^{n-1} \frac{x-\ell}{n-\ell} = \frac{1}{n!} \prod_{\ell=0}^{n-1} (x-\ell)
\end{equation}
are integer-valued; moreover, they form a basis of the $\mZ$-module $\mathrm{Int}(\mZ) $ (see e.g.~\cite{IVP}).
The polynomial $\binom{x}{n}$ has degree $n$ and its roots are the integers $\{0,1,\dotsc,n-1\}$. 
The first few polynomials 
are given by
\[
\binom{x}{0} = 1,\qquad\binom{x}{1} = x, \qquad\
\binom{x}{2} = \frac12x^2- \frac12x, \qquad
\binom{x}{3} = \frac{1}{6}x^3 - \frac{1}{2}x^2 +  \frac{1}{3}x.
\]

Now, we are interested in functions that take on integer values when evaluated at the eigenvalues of the Casimir operator $\Om$. 
As there is a disparity pertaining to the form of the eigenvalues \eqref{evO}, depending on the dimension $m$, we first handle the case where the dimension is even.

\subsubsection{On squares of integers}

For even dimension $m$, the eigenvalues \eqref{evO} of $\Om$ are squares of integers, which are of course again integers, so every integer-valued polynomial with $\Om$ substituted for $x$ is a valid solution for $F$ in \eqref{tTeO}.
However, the condition to be integer-valued on squares of integers is less restrictive than the requirement of being integer-valued on all integers.
In order to specify the exact class of solutions, we introduce a special type of integer-valued polynomials. 
For $n=0$, put $E_0(x)  \equiv 1$, and for $n\in\mZ_{\geq1}$, put
\begin{equation}\label{Enx}
E_n(x) 
=  \prod_{\ell=0}^{n-1} \frac{x^2-\ell^2}{n^2-\ell^2}
= \frac{2}{(2n)!} \prod_{\ell=0}^{n-1} (x^2-\ell^2).
\end{equation}
The polynomial $E_n$ has degree $2n$ and its roots are the integers $\{0,\pm1,\pm2,\dotsc,\pm(n-1)\}$, while $
E_n(n) = 1.
$
The first few of these polynomials are given by
\[
E_0(x) = 1,\qquad
E_1(x) = x^2,  \qquad
E_2(x)  = \frac{1}{12}x^4 - \frac{1}{12}x^2, \qquad E_3(x)  = \frac{1}{360}x^6 - \frac{1}{72}x^4 +  \frac{1}{90}x^2.
\]
\begin{proposition}
	The polynomials $\{\, E_n(x) \mid n\in\mZ_{\geq0} \, \}$ are integer-valued on $\mZ$.
\end{proposition}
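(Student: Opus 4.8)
The plan is to reduce the claim to the integer-valuedness of ordinary binomial coefficients, which is already recorded above (the $\binom{x}{k}$ form a $\mZ$-basis of $\mathrm{Int}(\mZ)$). Concretely, I would prove the polynomial identity
\[
E_n(x) = \binom{x+n}{2n} + \binom{x+n-1}{2n},
\]
after which the statement is immediate: for $x \in \mZ$ both $x+n$ and $x+n-1$ lie in $\mZ$, so each term on the right is an integer, and hence so is their sum $E_n(x)$.

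To establish this identity I would first rewrite the defining product of $E_n$ in a symmetric form. Using $x^2-\ell^2 = (x-\ell)(x+\ell)$ and noting that the $\ell=0$ factor contributes $x^2$, one gets $\prod_{\ell=0}^{n-1}(x^2-\ell^2) = x\prod_{\ell=-(n-1)}^{n-1}(x-\ell)$, i.e. $x$ times a block of $2n-1$ consecutive linear factors running from $x-(n-1)$ up to $x+(n-1)$; thus $E_n(x) = \frac{2x}{(2n)!}\prod_{\ell=-(n-1)}^{n-1}(x-\ell)$. Then I would expand each binomial on the right of the target identity as a falling product of $2n$ consecutive factors: $\binom{x+n}{2n}$ runs from $x+n$ down to $x-n+1$, while $\binom{x+n-1}{2n}$ runs from $x+n-1$ down to $x-n$. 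Both share the central block $\prod_{\ell=-(n-1)}^{n-1}(x-\ell)$ of $2n-1$ factors; the first carries the extra top factor $x+n$, the second the extra bottom factor $x-n$. Factoring out the common block and using $(x+n)+(x-n) = 2x$ collapses the sum to $\frac{2x}{(2n)!}\prod_{\ell=-(n-1)}^{n-1}(x-\ell)$, which matches the rewritten $E_n(x)$ exactly.

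The only real obstacle is spotting the correct two-term decomposition; once it is guessed, its verification is the elementary factor-matching above (equivalently, it is an instance of the $\binom{N}{2n}+\binom{N-1}{2n}$ pattern), and it is a genuine identity of polynomials, both sides having degree $2n$ and agreeing on all of $\mZ$, so no domain or convergence issues arise. With the identity in hand, $E_n(\mZ)\subseteq\mZ$ follows at once from the integer-valuedness of $\binom{x}{2n}$, completing the argument.
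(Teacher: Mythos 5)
Your proof is correct and follows essentially the same route as the paper: both express $E_n(x)$ as an explicit $\mZ$-linear combination of the integer-valued binomial polynomials, the paper via $E_n(x)=\tfrac{x}{n}\binom{x+n-1}{2n-1}=2\binom{x+n}{2n}-\binom{x+n-1}{2n-1}$ and you via $E_n(x)=\binom{x+n}{2n}+\binom{x+n-1}{2n}$. These two decompositions coincide by Pascal's rule, and your factor-matching verification of the identity is sound.
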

\begin{proof}
	The case $n=0$ is obvious. For $n\geq1$ one has
	\begin{equation*}
	E_n(x) = \frac{2}{(2n)!} \prod_{\ell=0}^{n-1} (x^2-\ell^2) = \frac{x}{n}  \frac{1}{(2n-1)!} \prod_{\ell=0}^{2n-2} (x-\ell+n-1)= \frac{x}{n} \binom{x+n-1}{2n-1}.
	\end{equation*}
	Using a property of binomial coefficients we get
	\[
	\frac{x}{n} \binom{x+n-1}{2n-1} = \frac{x+n-n}{n} \binom{x+n-1}{2n-1} =2 \binom{x+n}{2n} - \binom{x+n-1}{2n-1},
	\]
	which is clearly  
	integer-valued.
\end{proof}

The prominent feature of the polynomials defined in \eqref{Enx} is that they contain only even powers of $x$. 
Because of this, substituting the operator $\Om$ for $x^2$ in $E_n(x)$ yields an operator whose eigenvalues when acting on the eigenfunctions $\{\phi_{j,k,\ell}\}$ are all integers. 
We use the notation
\[E_n(\sqrt{\Om})= E_n(x)\Big\rvert_{x^2 = \Om}\] 
to denote this substitution. Moreover, we have the following important property. 
\begin{proposition}
	Every integer-valued polynomial $P(x)$ that contains only even powers of $x$ can be written as a $\mZ$-linear combination of the polynomials $E_n(x)$, i.e. 
	\begin{equation}\label{anEN}
	P(x) = \sum_{n=0}^{N} a_n E_n(x), \qquad a_n \in \mZ.
	\end{equation}
\end{proposition}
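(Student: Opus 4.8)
The plan is to proceed by a triangular evaluation argument, reducing everything to the root/normalization pattern of the $E_n$ together with the integer-valuedness already established for them. First I would record the spanning fact: since $E_n(x)$ has degree exactly $2n$ and involves only even powers of $x$, the family $\{E_0,E_1,\dotsc,E_N\}$ is linearly independent over $\mathbb{Q}$ and spans the space of even polynomials of degree at most $2N$. Consequently, any even polynomial $P(x)$ of degree $2N$ admits a \emph{unique} expansion $P(x)=\sum_{n=0}^{N}a_n E_n(x)$ with a priori rational coefficients $a_n\in\mathbb{Q}$. The whole content of the proposition is then that integer-valuedness of $P$ forces each $a_n$ to lie in $\mZ$.

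Next I would exploit the root pattern noted just after the definition \eqref{Enx}: for each $n\geq1$ one has $E_n(\ell)=0$ whenever $0\leq \ell\leq n-1$, together with the normalization $E_n(n)=1$. Substituting $x=\ell$ for $\ell=0,1,2,\dotsc,N$ into the expansion therefore produces a lower-triangular linear system with unit diagonal, namely $P(\ell)=\sum_{n=0}^{\ell}a_n E_n(\ell)$ with $E_\ell(\ell)=1$. Solving recursively yields $a_0=P(0)$ and, for $\ell\geq1$, $a_\ell = P(\ell)-\sum_{n=0}^{\ell-1}a_n E_n(\ell)$.

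Finally I run the induction on $\ell$. The base case is $a_0=P(0)\in\mZ$, since $P$ is integer-valued. Assuming $a_0,\dotsc,a_{\ell-1}\in\mZ$, the right-hand side of the recursion for $a_\ell$ is an integer, because $P(\ell)\in\mZ$ by hypothesis and each coefficient $E_n(\ell)\in\mZ$ by the previous proposition stating that the $E_n$ are integer-valued on $\mZ$. Hence $a_\ell\in\mZ$, which completes the induction and exhibits $P$ as a $\mZ$-linear combination of the $E_n$, as claimed.

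The argument is essentially a unit-triangularity (finite-difference) computation, so there is no genuine analytic obstacle; the one point that must be handled with care is the vanishing/normalization pattern $E_n(\ell)=0$ for $\ell<n$ and $E_n(n)=1$, which is precisely what makes the evaluation matrix unit lower-triangular and hence invertible over $\mZ$. Everything else is immediate from the already-proved integer-valuedness of the $E_n$ together with the integer-valuedness hypothesis on $P$.
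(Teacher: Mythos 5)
Your proof is correct and follows essentially the same route as the paper: both rest on the unit lower-triangular evaluation system coming from $E_n(\ell)=0$ for $0\leq\ell<n$ and $E_n(n)=1$, together with the already-established integer-valuedness of the $E_n$, to determine the coefficients $a_0,\dotsc,a_N$ recursively as integers from $P(0),\dotsc,P(N)$. The only cosmetic difference is in how the polynomial identity is concluded: you invoke uniqueness of the expansion in the basis $\{E_0,\dotsc,E_N\}$ of even polynomials of degree at most $2N$, whereas the paper uses $P(-x)=P(x)$ to get agreement at $2N+1$ points and then the interpolation count.
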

\begin{proof}
	Let  $P(x)$ be an integer-valued polynomial that contains only even powers of $x$. The polynomial $P(x)$ necessarily has even degree, say $2N$.
	Now, we know that the polynomial $E_n$ has degree $2n$ and its roots are the integers $\{0,\pm1,\pm2,\dotsc,\pm(n-1)\}$, while $
	E_n(n) = 1.
	$
	Hence, the coefficients $a_0,a_1,\dotsc,a_N\in \mZ$ in the sum \eqref{anEN}
	can be recursively determined from the (integer) values $P(0),P(1),\dotsc,P(N)$ such that when evaluated at $0\leq x \leq N$ the sum \eqref{anEN} equals $P(x)$.
	Moreover, as $P(-x)=P(x)$, \eqref{anEN} coincides with $P(x)$ on at least $2N+1$ points. 
	A polynomial of degree $2N$ is uniquely determined by its values at $2N+1$ points, which completes the proof.
\end{proof}
Note that every polynomial in $x$ that is integer-valued on squares can be turned into a polynomial that is integer-valued on $\mZ$ and that contains only even powers in $x$ (by substituting $x^2$ for $x$).
Hence, we have shown that the polynomials $E_n(x)$ suffice to construct every polynomial that takes on integer values when evaluated at the eigenvalues of the Casimir operator $\Om$.

One can further generalize the previous concept to construct any integer-valued function $F$ satisfying $F(x)=F(-x)$,
by specifying the coefficients $a_n \in \mZ$ in the (possibly infinite) series
\begin{equation*}
\sum_{n=0}^{\infty} a_n E_n(x).
\end{equation*}
Indeed, when $x=0$ this series equals the coefficient $a_0$, while its value at $x=n$ is fixed by the coefficients $a_0,a_1,\dotsc,a_n$ and thus can be 
specified by the choice of the value of $a_n$. 
Note that when evaluated at an integer $x\in \mZ$, only a finite number of terms in this series is nonzero, as $E_n(x) = 0$ for $n >\lvert x \rvert $. 

To conclude, we remark that there is an additional important aspect we have to consider for the goal we have in mind.  In the operator formulation \eqref{tTeO}, the function $F$ is used as an exponent of $\e^{i\frac{\pi}{2}}=i$.
For an integer $n$, the relation
$
i^n = i^{n \!\!\mod{ 4}}
$ holds. 
Hence, given two functions such that on each square number their function values are integers in the same 
congruence class modulo 4, they will yield the same eigenvalues and thus equivalent operators $\tT$. 
It thus suffices to consider the series 
\begin{equation}\label{anEn}
\sum_{n=0}^{\infty} a_n E_n(x), \qquad a_n \in \{0,1,2,3\},
\end{equation}
which gives all possible functions modulo 4. 

The values of the polynomials $E_n(x)$ 
modulo 4 can be computed by means of a computer algebra package. For the first few polynomials these are given in Table~1.
\begin{table}[!htbp]
	\[
	\begin{array}{c|*{16}c}
	x & 0&1 &2&3&4&5&6&7 & 8 &9&10&11&12&13&14&
	{15}\\ \hline 
	E_0 & 1 &1 &1 &1 &1 &1 &1 & 1 &1&1&1&1&1&1&1&1\\
	E_1 & 0 & 1 & 0 & 1 & 0 &1 & 0&1&0&1&0&1&0&1&0&1\\
	E_2 & 0&0&1&2&0&2&1&0&0&0&1&2&0&2&1&0\\
	E_3& 0&0&0&1&0&3&0&2&0&2&0&3&0&1&0&0\\
	E_4 & 0&0&0&0&1&2&2&2&0&2&2&2&1&0&0&0\\
	E_5 & 0&0&0&0&0&1&0&1&0&3&0&3&0&2&0&2
	\end{array}
	\]
	\caption
	{The values of the polynomials $E_0,E_1,\dotsc,E_5$ modulo 4 for $x=0,\dots,15$.}
\end{table}
Here, one can observe that $E_n(x)$ is periodic in $x$ with period dependent on $n$. The exact relation is 
stated in \hyperref[periodE]{Corollary~\ref*{periodE}} (Appendix).

\subsubsection{On squares of half-integers}

In the odd-dimensional case, the eigenvalues \eqref{evO} of $\Om$ are squares of half-integers. Therefore, we define the following polynomials. For $n\in\mZ_{\geq1}$, put
\begin{equation}\label{Dnx}
D_n(x)  =   \prod_{\ell=0}^{n-1} \frac{x^2-(\ell\!+\!\tfrac12)^2}{(n\!+\!\tfrac12)^2-(\ell\!+\!\tfrac12)^2}=  \frac{1}{(2n)!} \prod_{\ell=0}^{n-1} \big(x^2-(\ell\!+\!\tfrac12)^2\big).
\end{equation}
and for $n=0$ put $D_0(x)  \equiv 1$.
These polynomials are integer-valued on 
half-integers. Indeed, for $n\geq 1$ and $k\in \mZ$ one has
\[
D_n\bigl(k+\tfrac12\bigr) 
=  \frac{1}{(2n)!} \prod_{\ell=0}^{2n-1} \big(k+\tfrac12+n-1-\ell+\tfrac12\big) = \binom{k+n}{2n}.
\]
The first few polynomials are given by $D_0(x) = 1,$
\[
D_1(x) =  \frac{1}{2}x^2 -  \frac{1}{8},  \quad
D_2(x)  = \frac{1}{24}x^4 - \frac{5}{48}x^2 +\frac{3}{128},  \quad
D_3(x)  = \frac{1}{720}x^6 - \frac{7}{576}x^4 +  \frac{259}{11520}x^2- \frac{5}{1024}.\]
The polynomial $D_n$ has degree $2n$ and its roots are the half-integers $\big\{\pm\tfrac12,\pm\big(1\!+\!\tfrac12\big),\pm\big(2\!+\!\tfrac12\big),\dotsc,\pm\big(n-\!\tfrac12\big)\big\}$, while $
D_n(n\!+\!\tfrac12) = 1.
$

The values of the first few polynomials $D_n(x)$ 
modulo 4 are given in Table~2. Again, one clearly perceives the periodicity of $D_n(x)$, 
as stated in \hyperref[periodD]{Corollary~\ref*{periodD}} (Appendix).

\begin{table}[!htbp]
	\[
	\begin{array}{c|*{16}c}
	x\!-\!1/2 & 0&1 &2&3&4&5&6&7 & 8 &9&10&11&12&13&14&15\\ \hline 
	D_0 & 1 &1 &1 &1 &1 &1 &1 & 1 &1&1&1&1&1&1&1&1\\
	D_1 & 0 & 1 & 3 & 2 &2 &3 & 1&0&0&1&3&2&2&3&1&0\\
	D_2 &0& 0 & 1 & 1 & 3 & 3 &2 & 2&2&2&3&3&1&1&0&0\\
	D_3& 0&0&0&1&3&0&0&2&2&0&0&3&1&0&0&0\\
	D_4 &0&0&0&0&1&1&1&1&3&3&3&3&2&2&2&2\\
	D_5& 0&0&0&0&0&1&3&2&2&1&3&0&0&2&2&0
	\end{array}
	\]
	\caption{The values of the polynomials $D_0,D_1,\dotsc,D_5$ modulo 4 for $x=\tfrac12,\dots,15\!+\!\tfrac{1}{2}$.}
\end{table}
Similar to what we had for even dimension, the polynomials $D_n(x)$ 
contain only even powers of $x$, so we can again substitute the Casimir $\Om$ for $x^2$ in $D_n(x)$. 
In this way we arrive at the following general form 
\begin{equation}\label{anDn}
\sum_{n=0}^{\infty} a_n D_n(x), \qquad a_n \in \{0,1,2,3\}.
\end{equation}
Note that when evaluated at a half-integer $x\in \mZ\!+\!\tfrac12$, only a finite number of terms in this series is nonzero, as again $D_n(x) = 0$ for $n> \lvert x \rvert$.

\subsubsection{Conclusion}
\label{Concl}

In this way, we arrive at the following result for arbitrary dimension $m$.
\begin{theorem}\label{sols}
	The properties
	\begin{itemize}
		\item[(i)] the Helmholtz relations
		\begin{flalign*}
		&T \circ  \Delta_{x}   \ =-  \lvert y \rvert^2\circ T &&\\
		&T \circ   \lvert x\rvert^2\,  = -  \Delta_{y} \circ T&&
		\end{flalign*}
		\item[(ii)] $ T \, \phi_{j,k,\ell} = \mu_{j,k} \, \phi_{j,k,\ell} \qquad \text{with }\mu_{j,k}\in \mC$
		
		\item[(iii)]$T^4 = \operatorname{id}$
	\end{itemize}
	are satisfied by an operator $T$ of the form  
	\begin{equation}\label{TeO}
	T = \e^{i\frac{\pi}{2} F(\sqrt{\Om})
	}\e^{i \frac{\pi}{2}  ( h-\frac{m}{2} )   }  \in \overline{ \mathcal{U} } \big(  \mathfrak{sl}(2) \big),
	\end{equation}
	where $F(\sqrt{\Om})$ is an operator that consists of a function given by \eqref{anEn} (for even dimension) or \eqref{anDn} (for odd dimension)
	with the Casimir operator $\Omega$ substituted for $x^2$. 
	Conversely, every operator that satisfies properties~{(i)}--{(iii)} is equivalent with an operator of the form \eqref{TeO}.
\end{theorem}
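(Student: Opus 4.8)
The plan is to treat the two implications separately: the forward direction will be a direct verification against \hyperref[proptT]{Proposition~\ref*{proptT}}, while the converse is where the integer-valued polynomial machinery of the preceding subsections carries the argument.

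For the forward direction, given $T$ of the form \eqref{TeO} I would set $\tT = \e^{i\frac{\pi}{2}F(\sqrt{\Om})}$, so that $T = \tT\circ\cF$ matches the decomposition \eqref{tTF} with $\cF$ as in \eqref{opF}. Since $F(\sqrt{\Om})$ is a power series in the Casimir $\Om$, it lies in the center of $\overline{\mathcal{U}}(\mathfrak{sl}(2))$ and hence commutes with the generators of $\mathfrak{sl}(2)$; this is the first hypothesis of \hyperref[proptT]{Proposition~\ref*{proptT}}. For the second hypothesis I would use the Casimir eigenvalues \eqref{evO}: under the substitution $x^2 = \Om$ one has $F(\sqrt{\Om})\,\phi_{j,k,\ell} = F(k+\lambda)\,\phi_{j,k,\ell}$, and since $F$ is assembled from the $E_n$ of \eqref{Enx} (for even $m$, where $\lambda\in\mZ$) or the $D_n$ of \eqref{Dnx} (for odd $m$, where $\lambda\in\mZ+\tfrac12$), these values are integers by the integer-valued property proved above. \hyperref[proptT]{Proposition~\ref*{proptT}} then yields properties~(i)--(iii), and membership in $\overline{\mathcal{U}}(\mathfrak{sl}(2))$ is immediate because both exponentiated operators are power series in elements of $\mathfrak{sl}(2)$.

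For the converse, I would start from an arbitrary $T$ satisfying (i)--(iii) and pass to $\tT = T\circ\cF^{-1}$. By \hyperref[eigenv]{Lemma~\ref*{eigenv}} the eigenvalues are $\mu_{j,k}=(-1)^j\mu_{0,k}$ with $\mu_{0,k}=i^{c_k}$, $c_k\in\{0,1,2,3\}$; combining this with the Fourier eigenvalues \eqref{eigF} gives
\[
\tT\,\phi_{j,k,\ell} = i^{-(2j+k)}\mu_{j,k}\,\phi_{j,k,\ell} = i^{-k}\mu_{0,k}\,\phi_{j,k,\ell} = i^{d_k}\,\phi_{j,k,\ell},
\]
where $d_k\equiv c_k-k \pmod 4$, so $\tT$ is diagonal with eigenvalues depending only on $k$. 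The problem thus reduces to producing an even integer-valued function $F$ with $F(k+\lambda)\equiv d_k\pmod 4$ for every $k\ge 0$: then $\e^{i\frac{\pi}{2}F(\sqrt{\Om})}$ has eigenvalue $i^{d_k}$ and reproduces $\tT$, whence $T = \e^{i\frac{\pi}{2}F(\sqrt{\Om})}\cF$ has the same eigenvalues as the given operator. Because any operator satisfying (i)--(ii) is pinned down by its eigenvalues through the kernel formula \eqref{kern} of \hyperref[intr]{Proposition~\ref*{intr}}, matching eigenvalues suffices to conclude that the two operators are equivalent.

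The hard part is this realization step. For even $m$ the nodes $k+\lambda$ run through nonnegative integers and I would expand $F=\sum_n a_n E_n$ as in \eqref{anEn}; for odd $m$ they run through nonnegative half-integers and I would use $F=\sum_n a_n D_n$ as in \eqref{anDn}. The coefficients $a_n\in\{0,1,2,3\}$ are determined recursively: since $E_n$ (respectively $D_n$) vanishes at all nodes below its top one and equals $1$ there, and since only finitely many terms are nonzero at any single node, the prescribed residue at the $n$-th node fixes $a_n$ once $a_0,\dots,a_{n-1}$ are chosen. This triangular, unit-diagonal structure of the node values --- together with the facts that $F$ is forced to be even and that $\lambda\ge 0$ keeps every node $k+\lambda$ among the nonnegative (half-)integers --- is exactly what makes the recursion solvable for an arbitrary target sequence $(d_k)$, and hence what guarantees that every admissible spectrum is realized by some $F$. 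Once $F$ is in hand, the equality of eigenvalues closes the argument.
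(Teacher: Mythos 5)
Your proposal is correct and follows essentially the same route as the paper: the forward direction is exactly the application of Proposition~\ref{proptT} together with the integer-valuedness of the $E_n$/$D_n$ (which the paper treats as already established), and your converse is the paper's argument — reduce via Lemma~\ref{eigenv} to the sequence $\mu_{0,k}$ and recursively fix the coefficients $a_n$ using the triangular, unit-diagonal structure of the node values of $E_n$ (resp.\ $D_n$) at $k+\lambda$. Your explicit appeal to Proposition~\ref{intr} to justify that matching eigenvalues gives equivalence is a useful clarification of a point the paper leaves implicit, but it is not a different method.
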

\begin{proof}
	Only the last part remains to be proven. For this, it suffices to note that for an operator satisfying properties~{(i)}--{(iii)}, by \hyperref[eigenv]{Lemma~\ref*{eigenv}}, its spectrum of eigenvalues is completely determined by the eigenvalues $\mu_{0,k}$ for $k\in \mZ_{\geq0}$. Now, 
	the coefficients $a_n$ in \eqref{anEn} or \eqref{anDn} can be chosen to yield every possible set of valid eigenvalues $\mu_{0,k}$ for $k\in \mZ_{\geq0}$. 
	Indeed, recursively working upwards from $k=0$, the coefficient $a_{\lfloor k+\lambda \rfloor}$ fixes the value of the function $F$ evaluated at $k+\lambda$, with $\lambda=m/2-1$. 
	This in turn fixes the eigenvalue $\mu_{0,k}$ of $T$ 
	as we have
	\[
	T  \, \phi_{0,k,\ell} 
	=i^{ F(k+\lambda) }  i^{ k}\, \phi_{0,k,\ell}\rlap{,}
	\]
	where we used that the eigenvalues of $\Om$ are given by \eqref{evO}.
\end{proof}
Note that the preceding theorem naturally contains the classical Fourier transform. Indeed, for $F\equiv 0$ the operator \eqref{TeO} is precisely the operator exponential formulation \eqref{opF} of the Fourier transform. 
We now investigate how we can further narrow down this class of integral transforms $T$, preferably to a finite set of interesting transforms. 
Throughout this process we seek inspiration in other useful properties of the Fourier transform.

\subsection{Periodicity restriction}
\label{ssPr}

We now assume that $T$ is an operator of the form \eqref{TeO} as described in \hyperref[sols]{Theorem~\ref*{sols}}. 
The behavior of the eigenvalues $\mu_{j,k}$ of $T$ with regard to the index $j$ 
is given in \hyperref[eigenv]{Lemma~\ref*{eigenv}}.
By successive application of relation \eqref{mujk} we find that the eigenvalues are two-periodic in $j$:
\[
\mu_{j+2,k} = (-1)^2\, \mu_{j,k}=  \mu_{j,k}.
\]
As far as the $k$ index is concerned there are thus far no restrictions on the eigenvalues of the solutions \eqref{TeO} for $T$. 
From \eqref{eigF} one clearly sees that the eigenvalues of the Fourier transform are four-periodic in the index $k$. 
If we restrict the class of operators in \hyperref[sols]{Theorem~\ref*{sols}} to those operators whose eigenvalues are four-periodic in the index $k$,  we necessarily have but a finite number of valid operator solutions.
Indeed, from \hyperref[eigenv]{Lemma~\ref*{eigenv}} we know that every operator $T$ whose eigenvalues are four-periodic in $k$, i.e.~
\begin{equation*}
\mu_{j,k+4} =  \mu_{j,k},
\end{equation*}
will have its eigenvalue spectrum completely determined by the four values
$
\mu_{0,0},\mu_{0,1},\mu_{0,2},\mu_{0,3}
$ (or any other set of four eigenvalues with $k$ indices that are mutually incongruent modulo 4) . 
Furthermore, each one of these four eigenvalues can take on only four possible values, namely $\{ 1,i,-1,-i\}$, as by \hyperref[eigenv]{Lemma~\ref*{eigenv}} they must be integer powers of $i$. 
Hence, this leaves us with a finite number of valid operator solutions.

An additional advantage of imposing this periodicity restriction on $T$ is that this will allow us to obtain a closed formula for the kernel when $T$ is written as an integral transform. This will be discussed subsequently in Section \ref{Cftk}.

First, to find these solutions, we again look at the decomposition \eqref{tTF} of $T$. 
As we now require the eigenvalues of the operator $T$ to be four-periodic in $k$, and using that those of the Fourier transform already are four-periodic in $k$, this implies that the eigenvalues of the operator $\tT$ must also be four-periodic in $k$. 
In order for this to hold, we need a function of the form \eqref{anEn} or \eqref{anDn} that is four-periodic modulo 4 when evaluated at (the square root of) the eigenvalues of $\Om$.

Depending on the parity of the dimension $m$ one works in, the desired operators follow from the results in the following theorems. 
Here, we denote by $ \mathbf{1}_{A}$
the indicator function of the set $A$, defined as
\[  \mathbf{1}_{A} \colon A \to \{0,1\}  \colon
x \mapsto \mathbf{1}_{A}(x) = \begin{cases} 1 & \text{if }x \in A \\ 0 &  \text{if }x\notin A \end{cases}
\]
and also $4\mZ +b = \{\, 4a +b  \mid a \in \mZ \, \}$. For even dimension, the theorem is formulated as follows.
\begin{theorem} \label{moddb}
	For $x \in \mZ_{\geq0}$, one has
	\[
	E_1(x) \equiv 
	\mathbf{1}_{4\mZ+1}(x) + \mathbf{1}_{4\mZ+3}(x)   \pmod{ 4} 
	\]
	\[
	E_2(x) + 2\, E_3(x)  \equiv \mathbf{1}_{4\mZ+2}(x) \pmod{ 4}
	\]
	\[
	\sum_{n=1}^{\infty}\big( E_{2^n+1}(x) + \sum_{j=1}^{n-1} 2\, E_{2^n+1+2^j}(x) \big)  \equiv 
	\mathbf{1}_{4\mZ+3}(x)  \pmod{ 4} .
	\]
	If one denotes  
	\[
	\cE_{0101}(x) \defeq E_1(x), \qquad \cE_{0010}(x) \defeq E_2(x) + 2\, E_3(x),  \]\[
	\cE_{0001}(x) \defeq\sum_{n=1}^{\infty}\big( E_{2^n+1}(x) + \sum_{j=1}^{n-1} 2\, E_{2^n+1+2^j}(x) \big),
	\]
	then modulo 4 every integer-valued even function $F(x)$ on the integers that is four-periodic in $x$ can be written as
	\begin{equation}\label{FE}
	F(x) = a 
	+ b \,\cE_{0101} (x)  + c \,\cE_{0010} (x) +d\,\cE_{0001} (x),
	\end{equation}
	with $a,b,c,d \in \{0,1,2,3\}$.
\end{theorem}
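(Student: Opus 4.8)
The plan is to split the statement into two independent tasks: (1) establish the three explicit congruences that identify $\cE_{0101},\cE_{0010},\cE_{0001}$ as ``residue detectors'' modulo $4$, and (2) deduce the representation \eqref{FE} from these by a short linear-algebra argument over $\mZ/4\mZ$. I would carry out (2) first, since it is routine once (1) is available, and reserve the bulk of the effort for the third congruence, which is the genuinely hard part.

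For the spanning statement (2) I would simply read off the values of the four functions $1=E_0,\cE_{0101},\cE_{0010},\cE_{0001}$ at the residues $x\equiv 0,1,2,3\pmod 4$. Using the three congruences, these value-vectors are $(1,1,1,1)$, $(0,1,0,1)$, $(0,0,1,0)$ and $(0,0,0,1)$ respectively. Arranged as the coefficient matrix of $(a,b,c,d)$ against the four residues, this matrix is unitriangular, hence invertible over $\mZ/4\mZ$ with determinant $1$. Since every four-periodic function on $\mZ_{\geq0}$ is determined modulo $4$ by its four values at $x=0,1,2,3$, solving the triangular system yields the unique coefficients $a=F(0)$, $b=F(1)-F(0)$, $c=F(2)-F(0)$, $d=F(3)-F(1)$, each reduced into $\{0,1,2,3\}$. (The representing functions are even polynomials in $x$, which is exactly what later permits the substitution $x^2=\Om$.)

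It then remains to prove the three congruences on $\mZ_{\geq0}$. The first is immediate: $E_1(x)=x^2$, and $x^2\equiv 0\pmod 4$ for even $x$ while $x^2\equiv 1\pmod 4$ for odd $x$, which is precisely $\mathbf{1}_{4\mZ+1}(x)+\mathbf{1}_{4\mZ+3}(x)$. For the second, I would first note that $E_2(x)+2E_3(x)$ is four-periodic modulo $4$ (this follows from the period of $E_n$ modulo $4$ recorded in the appendix, Corollary~\ref{periodE}), so that it suffices to check the four values $x=0,1,2,3$; from the explicit forms of $E_2,E_3$ one obtains $(0,0,1,0)$, i.e.\ $\mathbf{1}_{4\mZ+2}(x)$.

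The main obstacle is the third congruence, whose left-hand side is an infinite series. For each fixed $x$ only finitely many terms survive, because $E_N(x)=0$ whenever $N>x$; equivalently, the coefficients $a_N$ of $E_N$ are determined recursively by $a_N\equiv \mathbf{1}_{4\mZ+3}(N)-\sum_{n<N}a_n E_n(N)\pmod 4$ (using $E_n(N)=0$ for $n>N$ and $E_N(N)=1$), and the claim is exactly that this recursion has support on $N=2^n+1$ (coefficient $1$) and $N=2^n+1+2^j$ with $1\le j\le n-1$ (coefficient $2$). To solve the recursion in closed form I would pass to the binomial description $E_N(x)=2\binom{x+N}{2N}-\binom{x+N-1}{2N-1}$ and analyse these coefficients modulo $4$: modulo $2$ one has $E_N(x)\equiv\binom{x+N-1}{2N-1}$, controlled by Lucas' theorem, while the correction modulo $4$ is governed by the number of binary carries (Kummer's theorem). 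The indices are tailored to this, since $N-1=2^n$ and $2N-1=2^{n+1}+1$ for $N=2^n+1$, whereas $N-1=2^n+2^j$ for $N=2^n+1+2^j$, so the binary-digit conditions isolate precisely the bottom two bits of $x$ being $11$, i.e.\ $x\equiv 3\pmod 4$. I expect the hard step to be the simultaneous bookkeeping of these carries modulo $4$ across all contributing $N$, for which the periodicity of each $E_N$ modulo $4$ (Corollary~\ref{periodE}) together with an induction on the highest set bit of $x$ should suffice.
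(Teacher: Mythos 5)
Your reduction of the spanning claim to a unitriangular system in the values at $x\equiv 0,1,2,3$ is exactly the paper's argument, and your handling of the first congruence ($E_1(x)=x^2$ plus a parity check) also coincides with the paper's. A minor slip occurs in the second congruence: Corollary~\ref{periodE} gives $E_3$ period $2^{N+2}=16$ (one needs $3\le 2^N$, hence $N=2$) and $2E_3$ period $8$, so a priori the sum $E_2+2E_3$ is only $8$-periodic mod $4$ and you must check $x=0,\dots,7$ rather than four values; the four-periodicity is the conclusion of that check, not an input. This is easily repaired (the paper instead evaluates $E_2(x)+2E_3(x)=x^2(x^2-1)(x^2+11)/(4\cdot 45)$ directly at $2k$ and $2k+1$).

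The substantial gap is the third congruence, which you yourself identify as the hard part but do not prove. You set up the correct recursion $a_N\equiv \mathbf{1}_{4\mZ+3}(N)-\sum_{n<N}a_nE_n(N)\pmod 4$ and propose to resolve it by Lucas/Kummer carry-counting applied to $E_N(x)=2\binom{x+N}{2N}-\binom{x+N-1}{2N-1}$, asserting that the binary-digit conditions ``isolate precisely the bottom two bits of $x$ being $11$''. That assertion is false termwise: the paper's computation shows $E_{2^n+1}(2^nk+2x+1)\equiv k(k+1)/2\pmod 4$ for $0\le x<2^{n-1}$, so each $E_{2^n+1}$ traces the triangular-number pattern $0,1,3,2,2,3,1,0,\dots$ across blocks of odd integers, which is nowhere near an indicator of $4\mZ+3$. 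The indicator only emerges after the correction terms $2E_{2^n+1+2^j}$ (contributing $(k+1)^2y(y+1)$ on matching blocks) cancel the unwanted residues, and after the partial sums $\cE^N_{0001}=\sum_{n\le N}T_n$ are shown by induction on $N$ to stabilize block by block via the periodicity of Corollary~\ref{periodE}. This cancellation-and-telescoping structure is the entire content of the paper's appendix argument, and your sketch neither exhibits it nor supplies a substitute; ``simultaneous bookkeeping of carries'' is named as the remaining difficulty but not carried out. As written, the key step --- showing that the full series, rather than its individual terms, reduces to $\mathbf{1}_{4\mZ+3}$ --- is missing.
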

\begin{proof}
	The proof of these results involves some long technical calculations and has therefore been omitted from the main text. It can be found in Appendix A.2. 
\end{proof}

To illustrate this property, the first few values of these functions modulo 4 are listed in Table~3.
\begin{table}[!htbp]
	\[
	\begin{array}{c|*{16}c}
	x & 0&1 &2&3&4&5&6&7 & 8 &9&10&11&12&13&14&15\\ \hline 
	1 & 1 &1 &1 &1 &1 &1 &1 & 1 &1&1&1&1&1&1&1&1\\
	\cE_{0101} & 0 & 1 & 0 & 1 & 0 &1 & 0&1&0&1&0&1&0&1&0&1\\
	\cE_{0010} & 0&0&1&0&0&0&1&0&0&0&1&0&0&0&1&0\\
	\cE_{0001} & 0&0&0&1&0&0&0&1&0&0&0&1&0&0&0&1 
	\end{array}
	\]
	\caption{The values of the functions $\cE_{0101} ,\cE_{0010},\cE_{0001}$ modulo 4 for $x=0,\dots,15$.}
\end{table}

In the same fashion, we have for odd dimension

\begin{theorem} \label{moddc}
	For $x \in \mZ_{\geq0}$, one has
	\[
	D_1\big(x\!+\!\tfrac12\big) + 2\,D_2\big(x+\tfrac12\big) \equiv \mathbf{1}_{4\mZ+1}( x )+
	\mathbf{1}_{4\mZ+2}(x) \pmod{ 4} 
	\]
	\[
	\sum_{n=1}^{\infty}\Big( D_{2^n} \big(x+\tfrac12\big) + \sum_{j=1}^{n-1} 2\, D_{2^n+2^j} \big(x+\tfrac12\big)\Big)    \equiv 
	\mathbf{1}_{4\mZ+2}(x) +\mathbf{1}_{4\mZ+3}(x) \pmod{ 4}
	\]
	\[
	2D_3 \big(x+\tfrac12\big)+ \sum_{n=0}^{\infty}\Big( D_{2^n+1} \big(x+\tfrac12\big) + \sum_{j=1}^{n-1} 2\, D_{2^n+1+2^j} \big(x+\tfrac12\big)\Big) \equiv
	\mathbf{1}_{4\mZ+2}(x) \pmod{ 4}.
	\]
	If one denotes 
	\[
	\cD_{0110}(x) \defeq D_1\big(x\!+\!\tfrac12\big) + 2\,D_2\big(x+\tfrac12\big) , \quad \cD_{0011}(x) \defeq  \sum_{n=1}^{\infty}\Big( D_{2^n} \big(x+\tfrac12\big) + \sum_{j=1}^{n-1} 2\, D_{2^n+2^j} \big(x+\tfrac12\big)\Big) , \]\[
	\cD_{0010}(x) \defeq2D_3 \big(x+\tfrac12\big)+ \sum_{n=0}^{\infty}\Big( D_{2^n+1} \big(x+\tfrac12\big) + \sum_{j=1}^{n-1} 2\, D_{2^n+1+2^j} \big(x+\tfrac12\big)\Big),
	\]
	then modulo 4 every integer-valued even function $F(x)$ on the half-integers that is four-periodic in $x$ can be written as
	\begin{equation}\label{FD}
	F(x) = a 
	+ b \,\cD_{0110} (x)  + c \,\cD_{0011} (x) +d\,\cD_{0010} (x),
	\end{equation}
	with $a,b,c,d \in \{0,1,2,3\}$.
\end{theorem}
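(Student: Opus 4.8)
The plan is to use the identity $D_n\big(k+\tfrac12\big)=\binom{k+n}{2n}$ (valid for $k\in\mZ_{\geq0}$), established just before the statement, to convert every assertion in the theorem into one about binomial coefficients modulo $4$. Writing $x$ for the non-negative integer index, each of the three displayed congruences then takes the shape $\sum_N a_N\binom{x+N}{2N}\equiv\mathbf 1_{\,\cdots}(x)\pmod 4$, the sum ranging over the prescribed index set with coefficients $a_N\in\{1,2\}$. Because $\binom{x+N}{2N}=0$ as soon as $N>x$, for each fixed $x$ only finitely many terms survive, so the infinite series define genuine $\mZ/4\mZ$-valued functions of $x$ and the congruences are honest pointwise identities.

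The technical core, which I would isolate as a lemma, is a mod-$4$ evaluation of $\binom{x+N}{2N}=\binom{x+N}{x-N}$ in terms of base-$2$ digits. By Kummer's theorem its $2$-adic valuation equals the number of carries $c$ occurring when $2N$ and $x-N$ are added in base $2$; hence the term is $\equiv0\pmod4$ once $c\geq2$, is $\equiv2\pmod4$ when $c=1$, and is odd when $c=0$, the residue $1$ or $3$ in the last case being pinned down by the standard mod-$4$ refinement of Lucas' theorem. This is exactly the mechanism that produces the binary-indexed sums in the statement: a single extra binary digit in $N$ contributes one carry and thus the coefficient $2$, whereas two or more extra digits force two carries and a vanishing contribution, which is why only the indices $2^n$, $2^n+2^j$, $2^n+1$ and $2^n+1+2^j$ appear. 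The whole computation runs parallel to the even-dimensional case underlying Theorem~\ref{moddb} (Appendix A.2), with the cleaner relation $D_n(k+\tfrac12)=\binom{k+n}{2n}$ replacing the two-term expression for $E_n$.

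With this lemma available I would prove the three congruences as follows. Triangularity, $\binom{x+N}{2N}=0$ for $x<N$ together with $\binom{2N}{2N}=1$, gives the forward recursion $a_N\equiv F(N)-\sum_{m<N}a_m\binom{N+m}{2m}\pmod4$ for the unique coefficients in the expansion $F(x)=\sum_N a_N\binom{x+N}{2N}$; feeding in $F=\mathbf 1_{4\mZ+1}+\mathbf 1_{4\mZ+2}$, then $\mathbf 1_{4\mZ+2}+\mathbf 1_{4\mZ+3}$, then $\mathbf 1_{4\mZ+2}$, and evaluating the binomials with the carry lemma, one checks that the solution is precisely the claimed binary-indexed pattern. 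For the finite combination $\cD_{0110}=D_1\big(\cdot+\tfrac12\big)+2D_2\big(\cdot+\tfrac12\big)$ one may instead invoke the periodicity of $D_1,D_2$ modulo $4$ (Corollary~\ref{periodD}) to reduce to the single period $x\in\{0,1,2,3\}$ and verify the four values directly; for the two infinite sums the carry lemma yields the identity on all of $\mZ_{\geq0}$ at once, four-periodicity then being a consequence rather than an input.

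It remains to establish the representation \eqref{FD}, which is linear algebra over $\mZ/4\mZ$. On one period $x\in\{0,1,2,3\}$ the functions $1$, $\cD_{0110}$, $\cD_{0011}$, $\cD_{0010}$ have value vectors $(1,1,1,1)$, $(0,1,1,0)$, $(0,0,1,1)$, $(0,0,1,0)$, and the associated matrix has determinant $-1$, a unit in $\mZ/4\mZ$. Hence these four functions form a basis of the free $\mZ/4\mZ$-module of four-periodic functions on $\mZ_{\geq0}$, and every even four-periodic $F$ has a unique expansion \eqref{FD} with $a,b,c,d\in\{0,1,2,3\}$, obtained by solving the (essentially triangular) system. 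The main obstacle throughout is the mod-$4$ control of the binomial coefficients and the bookkeeping showing that the infinite $D_n$-sums collapse to the indicator functions; once the carry lemma is in place, both the congruences and the spanning statement follow without difficulty.
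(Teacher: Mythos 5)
Your proposal is correct in substance and its architecture matches the paper's: the paper proves only the even-dimensional analogue (Theorem~\ref{moddb}) in detail in Appendix~A.2 and disposes of Theorem~\ref{moddc} with ``similar to Theorem~\ref{moddb}'', and both your plan and that appendix rest on the same three pillars --- the triangular structure ($D_N$ vanishes at the first $N$ evaluation points and equals $1$ at the next, so coefficients are determined recursively), a mod-$4$ evaluation of the relevant binomial coefficients, and a final reduction of the spanning claim to one period of length four. Where you genuinely diverge is in the key technical lemma: the paper determines the $2$-adic valuation and the residue of the odd part by explicitly factoring numerators and denominators of $E_{2^n+1}$ (double factorials, products of consecutive even/odd integers, and several tables of partial sums $\cE^N_{0001}$), whereas you package exactly that information as Kummer's theorem (number of base-$2$ carries in $(2N)+(x-N)$) plus a mod-$4$ refinement of Lucas for the odd case. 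Your framing is cleaner and explains \emph{why} the index sets are the binary patterns $2^n$, $2^n+2^j$, $2^n+1$, $2^n+1+2^j$ (one extra binary digit costs one carry, hence a factor $2$; two extra digits kill the term mod $4$), and it exploits the fact that $D_n(k+\tfrac12)=\binom{k+n}{2n}$ is a single binomial coefficient, which makes the odd case strictly easier than the paper's $E_n$ case. What it costs you is that the actual verification that the recursion produces precisely the claimed index pattern for all $n$ --- the part that occupies most of the appendix, including the periodicity input of Corollary~\ref{periodD} and the induction on partial sums --- is left as ``one checks''; that bookkeeping is real work and would need to be written out (an induction on the binary length of $x$, or on the partial sums as in the appendix) for the argument to be complete. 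Your closing linear-algebra step (the value matrix on $x\in\{0,1,2,3\}$ has determinant $-1$, a unit in $\mZ/4\mZ$) is a tidy and correct replacement for the paper's explicit solve for $a,b,c,d$ from $F(0),\dotsc,F(3)$.
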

\begin{proof}
	The proof of these results is similar to that of \hyperref[moddb]{Theorem~\ref*{moddb}}.
\end{proof}

The first few values of these functions modulo 4 are listed in Table~4.
\begin{table}[!htbp]
	\[
	\begin{array}{c|*{16}c}
	x\!-\!1/2 & 0&1 &2&3&4&5&6&7 & 8 &9&10&11&12&13&14&15\\ \hline 
	1 & 1 &1 &1 &1 &1 &1 &1 & 1 &1&1&1&1&1&1&1&1\\
	\cD_{0110} & 0 & 1&1& 0 &0 &1 & 1&0&0&1&1&0&0&1&1&0\\
	\cD_{0011}&0& 0 & 1 & 1 &0 & 0 &1 & 1&0&0&1&1&0&0&1&1\\
	\cD_{0010}& 0&0&1&0&0&0&1&0&0&0&1&0&0&0&1&0
	\end{array}
	\]
	\caption{The values of the functions $\cD_{0110} ,\cD_{0011},\cD_{0010}$ modulo 4 for $x=\tfrac12,\dots,15\!+\!\tfrac{1}{2}$.}
\end{table}

\begin{remark}
	The polynomials occurring in the preceding two theorems which are not an infinite series have as explicit form
	\[
	\cE_{0101}(x) = x^2, \qquad \cE_{0010}(x) = \frac{1}{180} x^6   + \frac{1}{18} x^4   -\frac{11}{180} x^2
	, \qquad
	\cD_{0110} (x) = \frac{1}{12} x^4   +\frac{7}{24} x^2  - \frac{5}{64}.
	\]
\end{remark}

Putting everything together, 

we can summarize our results in the following theorem.

\begin{theorem}\label{theo4per}
	Let $
	T \colon 
	\mathcal{S}(\mathbb{R}^m)  \to \mathcal{S}(\mathbb{R}^m) 
	$ be an operator that satisfies the following properties:
	\begin{itemize}
		\item[(i)] the Helmholtz relations
		\begin{flalign*}
		&T \circ  \Delta_{x}   \ =-  \lvert y \rvert^2\circ T &&\\
		&T \circ   \lvert x\rvert^2\,  = -  \Delta_{y} \circ T&&
		\end{flalign*}
		\item[(ii)] $ T \, \phi_{j,k,\ell} = \mu_{j,k} \, \phi_{j,k,\ell} \qquad \text{with }\mu_{j,k}\in \mC$
		
		\item[(iii)] $T^4 = \operatorname{id}$
		\item[(iv)] the eigenvalues of $T$ are 4-periodic in the index $k$: $ \mu_{j,k+4} =  \mu_{j,k}$.
	\end{itemize}
	Then $T$ can be written as
	\begin{equation*}
	T = \e^{i\frac{\pi}{2} F(\sqrt{\Om})
	}\e^{i \frac{\pi}{2}  ( h-\frac{m}{2} )   }
	\end{equation*}
	where $F$ consists of a function as specified in \eqref{FE} (\hyperref[moddb]{Theorem~\ref*{moddb}}) if $m$ even and \eqref{FD} (\hyperref[moddb]{Theorem~\ref*{moddc}}) if $m$ odd.
	
	Conversely, every operator $T$ of this form satisfies properties $(i)-(iv)$.
\end{theorem}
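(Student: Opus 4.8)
The plan is to assemble this statement from the general classification in Theorem~\ref{sols} together with the explicit bases furnished by Theorems~\ref{moddb} and~\ref{moddc}; the only genuinely new work is translating the four-periodicity condition (iv) on the eigenvalues into a four-periodicity condition modulo~$4$ on the exponent function $F$. For the forward direction I would start from an operator $T$ satisfying (i)--(iv). Since (i)--(iii) already hold, Theorem~\ref{sols} guarantees that $T$ is equivalent to an operator of the form \eqref{TeO}, namely $T = \e^{i\frac{\pi}{2}F(\sqrt{\Om})}\,\e^{i\frac{\pi}{2}(h-m/2)}$, where $F$ is an even integer-valued function arising from a series \eqref{anEn} (if $m$ is even) or \eqref{anDn} (if $m$ is odd). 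Reading off the eigenvalue on $\phi_{0,k,\ell}$ as in the proof of Theorem~\ref{sols}, and using the eigenvalue formula \eqref{evO} for $\Om$, gives $\mu_{0,k} = i^{F(k+\lambda)}\,i^{k}$ with $\lambda = m/2-1$.

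Next I would extract precisely what (iv) imposes on $F$. By Lemma~\ref{eigenv} the relation $\mu_{j,k}=(-1)^j\mu_{0,k}$ shows that four-periodicity of the full spectrum in $k$ is equivalent to four-periodicity of $\mu_{0,k}$ alone. Since $i^{k+4}=i^{k}$, the factor $i^{k}$ is itself already four-periodic, so $\mu_{0,k+4}=\mu_{0,k}$ holds if and only if $i^{F(k+4+\lambda)} = i^{F(k+\lambda)}$, that is $F(k+4+\lambda)\equiv F(k+\lambda)\pmod 4$ for all $k\in\mZ_{\geq0}$. As $k$ ranges over $\mZ_{\geq0}$ the argument $k+\lambda$ ranges over the integers $\geq\lambda$ when $m$ is even and over the half-integers $\geq\lambda$ when $m$ is odd; combined with the evenness of $F$ inherited from the $E_n$/$D_n$ construction, this is exactly the statement that $F$ is an even, four-periodic, integer-valued function on $\mZ$ (resp.\ on $\mZ+\tfrac12$) modulo~$4$.

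With this translation in hand I would invoke the representation theorems directly. For $m$ even, Theorem~\ref{moddb} expresses any such $F$ as $F(x)=a+b\,\cE_{0101}(x)+c\,\cE_{0010}(x)+d\,\cE_{0001}(x)$ with $a,b,c,d\in\{0,1,2,3\}$, which is precisely \eqref{FE}; for $m$ odd, Theorem~\ref{moddc} gives the analogous decomposition \eqref{FD} in the $\cD$-basis. This completes the forward direction. For the converse I would run the argument backwards: given $T$ with $F$ of the form \eqref{FE} or \eqref{FD}, Theorem~\ref{sols} already yields (i)--(iii), while the periodicity statements of Theorems~\ref{moddb} and~\ref{moddc} show that each basis function $\cE$ or $\cD$, and hence $F$, is four-periodic modulo~$4$; by the eigenvalue computation $\mu_{0,k}=i^{F(k+\lambda)}i^{k}$ above this forces $\mu_{0,k+4}=\mu_{0,k}$ and therefore property (iv).

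I expect the only delicate point to be the bookkeeping in the second paragraph: one must verify that the $i^{k}$ factor genuinely cancels out of the periodicity condition (it does, thanks to its own four-periodicity) and that the parity of $m$ is correctly matched to the integer versus half-integer setting in which $F$ is evaluated. All of the real difficulty has already been discharged into the appendix proofs of Theorems~\ref{moddb} and~\ref{moddc}, so the work here is conceptual synthesis rather than computation.
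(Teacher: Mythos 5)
Your proof is correct and follows essentially the same route as the paper: both reduce properties (i)--(iii) to Theorem~\ref{sols} and property (iv) to the classification of four-periodic even integer-valued functions modulo $4$ given in Theorems~\ref{moddb} and~\ref{moddc}. The only cosmetic difference is that the paper constructs $F$ directly so as to match the four eigenvalues $\mu_{0,0},\dots,\mu_{0,3}$, whereas you first take a general $F$ from Theorem~\ref{sols} and then show that (iv) forces $F(k+4+\lambda)\equiv F(k+\lambda)\pmod{4}$; both arguments rest on exactly the same two ingredients.
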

\begin{proof}
	The four eigenvalues $
	\mu_{0,0},\mu_{0,1},\mu_{0,2},\mu_{0,3}$ completely determine the eigenvalue spectrum of the operator $T$. By \hyperref[moddb]{Theorem~\ref*{moddb}} or \hyperref[moddc]{Theorem~\ref*{moddc}}, depending on the parity of the dimension, we can construct a function $F$ such that the eigenvalues of $\e^{i\frac{\pi}{2} F(\sqrt{\Om})
	}\e^{i \frac{\pi}{2}  ( h-\frac{m}{2} )   }$ and $T$ coincide. 
	The rest is a direct consequence of 
	\hyperref[sols]{Theorem~\ref*{sols}}.
\end{proof}

\subsection{Closed formulas for the kernel}
\label{Cftk}

In \hyperref[intr]{Proposition~\ref*{intr}}
we already found a formulation as an integral transform for all of the operator exponentials we obtained. 
The kernel of these integral transforms, given in \eqref{kern}, consists of an infinite series of Bessel functions and Gegenbauer polynomials.
Now, a natural question is whether it is possible to reduce these infinite series to a closed formula.
For a select case of transforms that also satisfy \hyperref[theo4per]{Theorem~\ref*{theo4per}} the answer to this is indeed positive. Our approach consists of first determining a formula for the kernel in the lowest dimension, followed by using a recursive relation to move up in dimension. 
Hereto, we first prove the following lemma.

\begin{lemma}\label{lemmak}
	Let $T$ be an operator of the form \eqref{TeO} as specified in \hyperref[sols]{Theorem~\ref*{sols}}. 
	When $T$ is written as an integral transform, its kernel satisfies the following recursive relation
	\[
	K_{m+2} = - i\, z^{-1} \partial_w K_m
	\]
	for $m\geq 2$. Here $K_{m+2}$ denotes the kernel in dimension $m+2$.
\end{lemma}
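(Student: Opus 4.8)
The plan is to start from the explicit series representation \eqref{kern} for $K_m$ and differentiate it term by term, using that $z$ and $w$ are treated as independent variables in \eqref{kern}, so that $\partial_w$ leaves the Bessel factors $J_{k+\lambda}(z)$ untouched. The only $w$-dependence sits in the Gegenbauer polynomials $C_k^\lambda(w)$, and here I would invoke the classical derivative identity $\frac{d}{dw}C_k^\lambda(w) = 2\lambda\, C_{k-1}^{\lambda+1}(w)$ (with the convention $C_{-1}^{\lambda+1}=0$, so that the $k=0$ term drops out). Applying this to \eqref{kern} gives
\[
\partial_w K_m = 2^{\lambda}\,\Gamma(\lambda)\, 2\lambda\, z^{-\lambda}\sum_{k=1}^{\infty}(k+\lambda)\,\mu_{0,k}^{(m)}\,J_{k+\lambda}(z)\,C_{k-1}^{\lambda+1}(w),
\]
where I write $\mu_{0,k}^{(m)}$ to emphasise the dimension in which the eigenvalues are taken.

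Next I would multiply by $-i\,z^{-1}$ and reindex the sum by $k \mapsto k+1$, so that the summation again runs from $0$. This turns $k+\lambda$ into $k+\lambda+1$, $J_{k+\lambda}$ into $J_{k+\lambda+1}$, and $C_{k-1}^{\lambda+1}$ into $C_k^{\lambda+1}$, which are exactly the Bessel order and Gegenbauer parameter appearing in \eqref{kern} for dimension $m+2$, where $\lambda$ is replaced by $\lambda+1 = m/2$. Collecting the scalar prefactor and using $\lambda\,\Gamma(\lambda)=\Gamma(\lambda+1)$, the constant $2^{\lambda}\Gamma(\lambda)\cdot 2\lambda$ becomes precisely $2^{\lambda+1}\Gamma(\lambda+1)$, which matches the normalisation of $K_{m+2}$, and the power of $z$ becomes $z^{-(\lambda+1)}$, again as required.

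The one genuinely non-trivial step, and the place where the claimed factor $-i$ is pinned down, is the comparison of the eigenvalues across the two dimensions. Using the formula $\mu_{0,k} = i^{F(k+\lambda)}\,i^{k}$ from the proof of \hyperref[sols]{Theorem~\ref*{sols}}, and noting that the $\Om$-index of the pair $(m,k+1)$, namely $(k+1)+\tfrac{m-2}{2}=k+\tfrac{m}{2}$, coincides with the $\Om$-index of the pair $(m+2,k)$, I would establish
\[
\mu_{0,k+1}^{(m)} = i^{F(k+\lambda+1)}\,i^{k+1} = i\cdot i^{F(k+\lambda+1)}\,i^{k} = i\,\mu_{0,k}^{(m+2)} .
\]
The extra factor $i$ produced by the shift in the $i^{k}$ term is exactly what cancels against the prefactor $-i$ (since $-i\cdot i = 1$), leaving the series for $K_{m+2}$ verbatim. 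Assembling the three pieces then yields $-i\,z^{-1}\partial_w K_m = K_{m+2}$.

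Finally I would comment on the boundary case $m=2$ (that is, $\lambda=0$), where both $\Gamma(\lambda)$ and the Gegenbauer normalisation are singular: there the product $2^{\lambda}\Gamma(\lambda)(k+\lambda)\,C_k^\lambda(w)$ must be read in its standard limiting sense, in which the derivative identity persists, so the computation carries over unchanged. The main obstacle, as indicated, is not the term-by-term differentiation (which is routine) but correctly tracking the dimension-dependence of the eigenvalues; once the identity $\mu_{0,k+1}^{(m)} = i\,\mu_{0,k}^{(m+2)}$ is in hand, everything else is bookkeeping of constants.
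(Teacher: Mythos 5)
Your proposal is correct and follows essentially the same route as the paper's own proof: term-by-term differentiation of the series \eqref{kern}, the Gegenbauer identity \eqref{Geg0}, reindexing $k\mapsto k+1$, and the observation that $F$ is evaluated at the same argument $k+\tfrac{m}{2}$ in both dimensions so that the shifted factor $i^{k+1}$ supplies exactly the $i$ cancelled by the prefactor $-i$. Your additional remark on the limiting interpretation of $2^{\lambda}\Gamma(\lambda)(k+\lambda)C_k^{\lambda}$ at $\lambda=0$ is a sensible supplement that the paper defers to the proof of Theorem~\ref{theoTabc2}.
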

\begin{proof}
	As $T$ is of the form \eqref{TeO}, its eigenvalues are given by
	\[
	T  \, \phi_{j,k,\ell}  = \e^{i\frac{\pi}{2} F(\sqrt{\Om})
	}\e^{i \frac{\pi}{2}  ( h-\frac{m}{2} )   } \, \phi_{j,k,\ell} = \e^{i\frac{\pi}{2} F(k+\lambda) }  \e^{i\frac{\pi}{2}(2j+ k) }\, \phi_{j,k,\ell},
	\]
	where we used \eqref{evO} for the eigenvalues of $\Om$, \eqref{eigF} for those of $\cF$ and $\lambda=(m-2)/2$.
	
	From \hyperref[intr]{Proposition~\ref*{intr}} we know that
	$T$ can be written as an integral transform 
	\[
	(T f)   (y) = \frac{1}{(2\pi)^{m/2}} \int_{\mathbb{R}^m}\! K_m(x,y) \, f(x) \, \mathrm{ d}x
	\]
	with kernel 
	\begin{equation*}
	K_m(x,y) =  {2}^{\lambda}\, \Gamma ( \lambda )    \sum_{k=0}^{+\infty} (k+\lambda)\, i^{ F(k+\lambda) }  i^{k } \, z^{-\lambda}J_{k+\lambda}( z) \,C^{\lambda}_{k}(w).
	\end{equation*}
	Using Appendix \ref{secA1}, property \eqref{Geg0} of the Gegenbauer polynomials, we find 
	\begin{align*}
	- i\, z^{-1} \partial_w K_m(x,y)  & =  {2}^{\lambda}\, \Gamma ( \lambda )    \sum_{k=0}^{+\infty} (k+\lambda)\, i^{ F(k+\lambda) }  (-i) \, i^{k } \, z^{-\lambda-1}J_{k+\lambda}( z) \, \partial_w  C^{\lambda}_{k}(w) \\
	& =  {2}^{\lambda}\, \Gamma ( \lambda )    \sum_{k=1}^{+\infty} (k+\lambda)\, i^{ F(k+\lambda) }  i^{k-1 } \, z^{-(\lambda+1)}J_{k+\lambda}( z) \,2 \lambda \, C^{\lambda+1}_{k-1}(w)\\
	& =  {2}^{\lambda+1}\, \Gamma ( \lambda +1)    \sum_{k=0}^{+\infty} (k+1+\lambda)\, i^{ F(k+1+\lambda) }  i^{k } \, z^{-(\lambda+1)}J_{k+1+\lambda}( z) \, C^{\lambda+1}_{k}(w)
	\end{align*}
	As $\lambda+1=m/2$, this last expression is precisely $K_{m+2}(x,y)$.
\end{proof}

\subsubsection{Even dimension}

The previous lemma allows us to move up in dimension in steps of two. We now consider only even dimension, starting with the two-dimensional case. 
An important asset in the explicit computation of a formula for the kernels in dimension $m=2$ will be the property of 4-periodicity in the index $k$, as is the case for the solutions specified in \hyperref[theo4per]{Theorem~\ref*{theo4per}}. 
In the two-dimensional case, the Gegenbauer polynomials in the kernel \eqref{kern} reduce to cosines. 
The 4-periodicity in $k$ then allows us to make explicit use of the formulas 
\begin{equation}
\label{cost}
\cos ( z \sin \theta) =  J_0(z) +   2 \sum_{n=1}^{+\infty}J_{4n}( z) \, \cos (4n \theta) +   2 \sum_{n=0}^{+\infty} J_{4n+2}( z) \, \cos \bigl((4n+2) \theta\bigr) ,
\end{equation}
\begin{equation}
\label{coss}
\cos ( z \cos \theta) =  J_0(z) +   2 \sum_{n=1}^{+\infty}J_{4n}( z) \, \cos (4n \theta) -   2 \sum_{n=0}^{+\infty} J_{4n+2}( z) \, \cos \bigl((4n+2) \theta\bigr) ,
\end{equation}
\begin{equation}
\label{sins}
\sin (  z \cos \theta)=   2 \sum_{n=0}^{+\infty} J_{4n+1}( z) \, \cos \bigl((4n+1) \theta\bigr)
-  2 \sum_{n=0}^{+\infty}  J_{4n+3}( z) \, \cos \bigl((4n+3) \theta\bigr) ,
\end{equation}
which can be found in \cite[p.\,22]{MR0010746}, formulas (1), (3) and (4). 
In this way we arrive at the following result.

\begin{theorem}\label{theoTabc2}
	In dimension $m=2$, the operator exponential
	\begin{equation*}
	T_{abc}=   \e^{i\frac{\pi}{2}  F_{abc}(\sqrt{\Om})}\e^{i \frac{\pi}{2}  ( h-1 )   }
	\end{equation*}
	with $F_{abc}(x) = a 
	+ b \,\cE_{0101} (x)  + c \,\cE_{0010} (x)$ (as specified in \hyperref[moddb]{Theorem~\ref*{moddb}}) and $a,b,c\in \{0,1,2,3\}$, 
	can be written as an integral transform whose kernel is given by
	\begin{equation}\label{kernelabc2}
	K_2(x,y) = i^a\Bigl(  \frac{1+ i^c}{2}  \cos(s)  +  i^{b+1} \sin(s)+ \frac{1- i^c}{2} \cos(t) \Bigr),
	\end{equation}
	where $s = \langle x,y\rangle$ and $t = \sqrt{\lvert x \rvert^2 \lvert y \rvert^2 - s^2}$.
\end{theorem}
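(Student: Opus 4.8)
The plan is to substitute the eigenvalues of $T_{abc}$ into the Bessel--Gegenbauer series \eqref{kern}, collapse it in dimension two, and match the four residue classes modulo $4$ against the expansions \eqref{cost}, \eqref{coss} and \eqref{sins}. First I would record the eigenvalues. Since $m=2$ gives $\lambda=0$, the operator $T_{abc}$ acts on $\phi_{j,k,\ell}$ by $i^{F_{abc}(k)}\,i^{2j+k}$, so the coefficients entering \eqref{kern} are $\mu_{0,k}=i^{F_{abc}(k)+k}$. Reading the values of $\cE_{0101}$ and $\cE_{0010}$ modulo $4$ off \hyperref[moddb]{Theorem~\ref*{moddb}} (equivalently Table~3) and sorting by $k\bmod 4$, I obtain $\mu_{0,k}=i^{a},\,i^{a+b+1},\,i^{a+c+2},\,i^{a+b+3}$ for $k\equiv 0,1,2,3\pmod 4$. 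These are $4$-periodic in $k$, and crucially the classes $k\equiv1$ and $k\equiv3$ carry opposite signs, since $i^{a+b+3}=-i^{a+b+1}$.

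Next I would specialise \eqref{kern} to $m=2$, where $\lambda=0$. Writing $w=\cos\theta$ with $\theta\in[0,\pi]$ the angle between $x$ and $y$, and passing to the limit $\lambda\to0$ (using $\lambda\,\Gamma(\lambda)\to1$ together with $\lim_{\lambda\to0}\lambda^{-1}C_k^{\lambda}(\cos\theta)=\tfrac{2}{k}\cos(k\theta)$ for $k\ge1$) turns the Gegenbauer polynomials into cosines with weights $1$ for $k=0$ and $2$ for $k\ge1$, yielding
\[
K_2(x,y)=\mu_{0,0}\,J_0(z)+2\sum_{k=1}^{\infty}\mu_{0,k}\,J_k(z)\,\cos(k\theta),\qquad z=\lvert x\rvert\,\lvert y\rvert .
\]
Because $\mu_{0,k}$ depends only on $k\bmod4$, I would split this series into the four arithmetic progressions $k\equiv0,1,2,3\pmod4$ and factor the constant coefficient out of each.

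The core of the argument is recognising each progression. The two even progressions are $i^{a}\bigl[J_0+2\sum_{n\ge1}J_{4n}\cos(4n\theta)\bigr]$ and $i^{a+c+2}\bigl[2\sum_{n\ge0}J_{4n+2}\cos((4n+2)\theta)\bigr]$, which by adding and subtracting \eqref{cost} and \eqref{coss} equal $\tfrac12 i^{a}\bigl(\cos(z\sin\theta)+\cos(z\cos\theta)\bigr)$ and $\tfrac12 i^{a+c+2}\bigl(\cos(z\sin\theta)-\cos(z\cos\theta)\bigr)$. For the odd progressions the sign relation $i^{a+b+3}=-i^{a+b+1}$ is exactly what is needed: their sum is $i^{a+b+1}\bigl[2\sum_{n\ge0}J_{4n+1}\cos((4n+1)\theta)-2\sum_{n\ge0}J_{4n+3}\cos((4n+3)\theta)\bigr]$, which equals $i^{a+b+1}\sin(z\cos\theta)$ by \eqref{sins}. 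Finally I substitute $z\cos\theta=\langle x,y\rangle=s$ and $z\sin\theta=\sqrt{\lvert x\rvert^2\lvert y\rvert^2-s^2}=t$ (nonnegative since $\sin\theta\ge0$), and collect the coefficients of $\cos(s)$, $\sin(s)$ and $\cos(t)$; using $i^{a+c+2}=-i^{a+c}$ to rewrite the even prefactors as $\tfrac12(1\pm i^{c})\,i^{a}$ gives exactly \eqref{kernelabc2}.

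The step I expect to be the main obstacle is the bookkeeping of the $\lambda\to0$ degeneration of \eqref{kern}: the factor $2^{\lambda}\Gamma(\lambda)$ diverges while $C_k^{\lambda}(w)\to0$ for $k\ge1$, so the two-dimensional cosine form has to be extracted as a genuine limit with the correct weights. Beyond that, the only delicate points are the orientation convention ensuring $z\sin\theta=t\ge0$ and the precise pairing of the four residue classes with the three series \eqref{cost}, \eqref{coss} and \eqref{sins}; it is the identity $i^{a+b+3}=-i^{a+b+1}$ that forces the two odd classes to assemble into a single $\sin(z\cos\theta)$ with no leftover $\sin(z\sin\theta)$ term. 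As a sanity check I would test $a=b=c=0$: there $\mu_{0,k}=i^{k}$, the series is the Jacobi--Anger expansion $\e^{iz\cos\theta}=\e^{i\langle x,y\rangle}$, and \eqref{kernelabc2} indeed reduces to $\cos(s)+i\sin(s)$.
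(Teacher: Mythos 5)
Your proposal is correct and follows essentially the same route as the paper's proof: invoke Proposition~\ref{intr} to get the Bessel--Gegenbauer kernel, take the $\lambda\to 0$ limit of $\Gamma(\lambda)C_k^\lambda(\cos\theta)$ to obtain the cosine series, compute the $4$-periodic eigenvalues $\mu_{0,0}=i^a$, $\mu_{0,1}=i^{a+b+1}$, $\mu_{0,2}=i^{a+c+2}$, $\mu_{0,3}=i^{a+b+3}$, and resum the four residue classes via \eqref{cost}, \eqref{coss}, \eqref{sins}. The only cosmetic difference is that you carry the extra powers of $i$ in the exponents where the paper writes $i^{a+b}i$ and $i^{a+c}(-1)$; the identification of the two odd classes into a single $\sin(z\cos\theta)$ term and the final regrouping into $\tfrac{1\pm i^c}{2}$ coefficients are exactly as in the paper.
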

\begin{proof}
	By \hyperref[sols]{Theorem~\ref*{sols}}, $T_{abc}$ satisfies the properties~{(i)}--{(iii)} and consequently, by \hyperref[intr]{Proposition~\ref*{intr}}, $T_{abc}$ can be written as an integral transform with kernel \eqref{kern}. 
	For $m=2$, 
	we have $\lambda=(m-2)/2 = 0$ and using the identity, for $w = \cos \theta$ and integer $k\geq1$,
	\[
	\lim_{\lambda \to 0} \Gamma ( \lambda ) \ C_k^\lambda ( \cos \theta) = \frac{2}{ k} \cos (k \theta), 
	\]
	(see \cite[Vol. {I}, section 3.15]{Erde}, formula (14))
	this kernel reduces to
	\begin{equation*}
	K_2(x,y) = \mu_{0,0} \, J_0(z) +   2 \sum_{k=1}^{+\infty} \mu_{0,k} \, J_{k}( z) \, \cos (k \theta).
	\end{equation*} 
	
	From \hyperref[moddb]{Theorem~\ref*{moddb}} we see that the eigenvalues of $T_{abc}$ are 4-periodic in $k$ and for $m=2$ 
	we have
	\[
	\mu_{0,0} = i^a,\quad  \mu_{0,1} = i^{a+b} i,\quad  \mu_{0,2} =i^{a +c} (-1) ,\quad  \mu_{0,3} = i^{a+b} (-i).
	\]
	This allows us to rewrite the kernel as
	\begin{align*}
	K_2(x,y) =  i^a \biggl( & J_0(z) +   2 \sum_{n=1}^{+\infty}J_{4n}( z) \, \cos (4n \theta) - i^{c}\,   2 \sum_{n=0}^{+\infty} J_{4n+2}( z) \, \cos \bigl((4n+2) \theta\bigr)  \\
	&+   i^{b+1} \Bigl(  2 \sum_{n=0}^{+\infty} J_{4n+1}( z) \, \cos  \bigl((4n+1) \theta\bigr)
	-  2 \sum_{n=0}^{+\infty}  J_{4n+3}( z) \, \cos \bigl((4n+3) \theta\bigr) \Bigr) \biggr)
	\end{align*} 
	The formulas \eqref{cost}, \eqref{coss}, \eqref{sins}, $s
	=z\cos\theta$ and $t
	= z \sin \theta$ then yield
	\begin{equation*}
	K_2(x,y) = i^a\Bigl( \frac{1}{2} \bigl(\cos(s) + \cos(t) \bigr) + i^c \frac12 \bigl(\cos(s) - \cos(t) \bigr) +  i^{b+1} \sin(s) \Bigr).
	\end{equation*}
\end{proof}

Using \hyperref[lemmak]{Lemma~\ref*{lemmak}} we now find:
\begin{theorem}\label{theoTabc}
	In even dimension $m$, the operator exponential 
	\begin{equation}\label{Tabc}
	T_{abc}=   \e^{i\frac{\pi}{2}  F_{abc}(\sqrt{\Om})}\e^{i \frac{\pi}{2}  ( h-\frac{m}{2} )   }
	\end{equation}
	with $F_{abc}(x) = a 
	+ b \,\cE_{0101} (x)  + c \,\cE_{0010} (x)$ (as specified in \hyperref[moddb]{Theorem~\ref*{moddb}}), 
	can be written as an integral transform whose kernel is given by
	\begin{equation}\label{kernelabc}
	K_m(x,y) =i^a  (-i)^{\lambda} \Bigl(  \frac{1+ i^c}{2} \bigl( \partial_s \bigr)^{\lambda}  \cos(s) +  i^{b+1} \bigl( \partial_s \bigr)^{\lambda} \sin(s)+ \frac{1- i^c}{2}  \Bigl( \partial_s -  \frac{s}{t} \   \partial_t \Bigr)^{\lambda}\cos(t)  \Bigr),
	\end{equation}
	with $\lambda=(m-2)/2$, $s = \langle x,y\rangle$ and $t = \sqrt{\lvert x \rvert^2 \lvert y \rvert^2 - s^2}$. Moreover, one has
	\begin{equation}\label{qsdf}
	\Bigl( \partial_s -  \frac{s}{t} \   \partial_t \Bigr)^{\lambda}\cos(t) =  \sqrt{\frac{\pi}{2}} \sum_{\ell =0}^{\left\lfloor  \frac{\lambda}{2} \right\rfloor} s^{\lambda-2 \ell}\    \frac{1}{2^{\ell} \ell!} \frac{\Gamma(\lambda+1)}{\Gamma(\lambda+1-2\ell)}   \frac{ {J}_{\lambda-1/2-\ell}(t)}{t^{\lambda-1/2-\ell}} .
	\end{equation}
\end{theorem}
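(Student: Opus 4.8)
The plan is to start from the two-dimensional kernel obtained in \hyperref[theoTabc2]{Theorem~\ref*{theoTabc2}} and climb up to dimension $m$ by applying the recursion of \hyperref[lemmak]{Lemma~\ref*{lemmak}} exactly $\lambda = (m-2)/2$ times. The first task is to rewrite the recursion operator $-i\,z^{-1}\partial_w$ in terms of the variables $s = \langle x,y\rangle$ and $t = \sqrt{\lvert x\rvert^2\lvert y\rvert^2 - s^2}$. Since $z = \lvert x\rvert\lvert y\rvert$ is held fixed while $w = \cos\theta$ varies, and since $s = z\cos\theta$, $t = z\sin\theta$, the assignment $(z,\theta)\mapsto(s,t)$ is the usual polar-to-Cartesian change of coordinates, so $s$ and $t$ may be treated as independent variables. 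A direct chain-rule computation gives $\partial_w\big\rvert_z = z\big(\partial_s - \tfrac{s}{t}\partial_t\big)$, hence
\[
-i\,z^{-1}\partial_w = -i\Big(\partial_s - \frac{s}{t}\partial_t\Big).
\]

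With this identification, I would apply the recursion $\lambda$ times to the kernel $K_2$ of \eqref{kernelabc2}. The terms $\cos(s)$ and $\sin(s)$ depend on $s$ only; on such functions $\partial_t$ vanishes and the operator reduces to $\partial_s$, so $\lambda$ applications produce $(-i)^{\lambda}(\partial_s)^{\lambda}$ acting on $\cos(s)$ and $\sin(s)$ respectively. The term $\cos(t)$ depends on $t$ only, but a single application already mixes in $s$, so $\lambda$ applications yield $(-i)^{\lambda}\big(\partial_s - \tfrac{s}{t}\partial_t\big)^{\lambda}\cos(t)$. Collecting the common factor $(-i)^{\lambda}$ together with the prefactor $i^a$ inherited from the base case reproduces exactly formula \eqref{kernelabc}. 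This settles the first assertion of the theorem modulo the explicit evaluation of $\big(\partial_s - \tfrac{s}{t}\partial_t\big)^{\lambda}\cos(t)$.

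The core of the argument, and the main obstacle, is establishing the closed form \eqref{qsdf}. I would prove it by induction on $\lambda$, writing $D = \partial_s - \tfrac{s}{t}\partial_t$ and denoting by $R_{\lambda}$ the right-hand side of \eqref{qsdf}. The base case $\lambda = 0$ follows from $\cos(t) = \sqrt{\tfrac{\pi}{2}}\,t^{1/2}J_{-1/2}(t)$. For the inductive step I would compute $D R_{\lambda}$ termwise: on the $s$-powers $\partial_s$ lowers the exponent by one, while on the Bessel factors $t^{-\nu}J_{\nu}(t)$ the standard identity $\tfrac{d}{dt}\big(t^{-\nu}J_{\nu}(t)\big) = -t^{-\nu}J_{\nu+1}(t)$ raises the order by one, so that $-\tfrac{s}{t}\partial_t$ sends $t^{-\nu}J_{\nu}$ to $s\,t^{-(\nu+1)}J_{\nu+1}$. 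Both contributions land on terms of the form appearing in $R_{\lambda+1}$, one shifting the summation index up by one and one keeping it fixed. Matching coefficients then reduces the claim to the single recurrence
\[
c_{\lambda,m} + (\lambda - 2m + 2)\,c_{\lambda,m-1} = c_{\lambda+1,m}, \qquad c_{\lambda,\ell} = \frac{1}{2^{\ell}\,\ell!}\frac{\Gamma(\lambda+1)}{\Gamma(\lambda+1-2\ell)},
\]
which is a routine Gamma-function identity once one uses $\Gamma(\lambda+2-2m) = (\lambda+1-2m)\Gamma(\lambda+1-2m)$. The delicate points to check are the boundary terms of the sum: the $\partial_s$-contribution carries a factor $\lambda - 2\ell$ that vanishes at the top index when $\lambda$ is even, while the poles of the Gamma function force the out-of-range coefficients to vanish, so that the summation range $0 \le \ell \le \lfloor\lambda/2\rfloor$ propagates correctly to $0\le m\le\lfloor(\lambda+1)/2\rfloor$. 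Verifying that these edge cases are handled automatically by the Gamma-function conventions is where the bulk of the care is needed.
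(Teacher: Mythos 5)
Your proposal is correct and follows essentially the same route as the paper: identify $-i\,z^{-1}\partial_w$ with $-i\bigl(\partial_s - \tfrac{s}{t}\,\partial_t\bigr)$, iterate Lemma~\ref{lemmak} starting from the $m=2$ kernel of Theorem~\ref{theoTabc2}, and establish \eqref{qsdf} by induction on $\lambda$ using the Bessel identity \eqref{ddt}. The only cosmetic difference is that the paper carries out the inductive step separately for $\lambda$ even and odd with explicit index shifts, whereas you package it as the single coefficient recurrence $c_{\lambda,m}+(\lambda-2m+2)c_{\lambda,m-1}=c_{\lambda+1,m}$ with the boundary terms absorbed by the poles of the Gamma function — both of which check out.
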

\begin{proof}
	For $m=2$ we have $\lambda = 0$, and the expression \eqref{kernelabc} coincides with the kernel \eqref{kernelabc2} which was obtained in 
	\hyperref[theoTabc2]{Theorem~\ref*{theoTabc2}}. 
	By successive application of \hyperref[lemmak]{Lemma~\ref*{lemmak}} we have for even dimension $m\geq 2$
	\[
	K_{m} = \bigl(- i\, z^{-1} \partial_w\bigr)^{\lambda} \, K_2.
	\]
	From $s=zw$ and $t=z \sqrt{1-w^2}$, 
	we easily find
	\[
	z^{-1}  \partial_w = z^{-1} \partial_w \lbrack s\rbrack\ \partial_s +z^{-1} \partial_w \lbrack t\rbrack\ \partial_t 
	= \partial_s -  \frac{w}{\sqrt{1-w^2}} \   \partial_t  = \partial_s -  \frac{s}{t} \   \partial_t ,
	\]
	which proves \eqref{kernelabc}. 
	
	We now show \eqref{qsdf} by induction.   
	The statement holds for $m=2$ as equation \eqref{qsdf} then reduces to the identity (see \eqref{sincos1}) 
	\[
	\cos t = \sqrt{\frac{\pi }{2} }\, t^{1/2} J_{-1/2} (t)
	.\]
	Before we continue, we make a distinction between $\lambda$ even and $\lambda$ odd (or equivalently $m \equiv 2 \pmod{4}$ and $m \equiv 0 \pmod{4}$), as the upper bound of the summation in \eqref{qsdf} contains a floor function. 
	
	Consider the case $\lambda=2j$ (or thus $m=4j+2$) and assume \eqref{qsdf} holds for this $\lambda$. 
	We have, using property \eqref{ddt} of the Bessel function, 
	\begin{align*}
	\Bigl( \partial_s -  \frac{s}{t} \   \partial_t \Bigr)^{\lambda+1}\cos(t) =&\ \Bigl( \partial_s -  \frac{s}{t} \   \partial_t \Bigr) \biggl(   \sqrt{\frac{\pi}{2}} \sum_{\ell =0}^{j} s^{2j-2 \ell} \frac{1}{2^{\ell} \ell!} \frac{\Gamma(2j+1)}{\Gamma(2j+1-2\ell)} \frac{  {J}_{(m-2\ell-3)/2}( t)}{ t^{(m-2\ell-3)/2}}\biggr) \\
	=& \  \sqrt{\frac{\pi}{2}} \sum_{\ell=0}^{j-1}  s^{2j-2\ell-1}\frac{1}{2^{\ell} \ell!} \frac{\Gamma (2j+1)}{\Gamma (2j -2 \ell)}  \frac{  {J}_{(m-2\ell-3)/2}( t)}{ t^{(m-2\ell-3)/2}}\\ &+  \sqrt{\frac{\pi}{2}} \sum_{\ell=0}^{j} s^{2j-2\ell+1} \frac{1}{2^{\ell} \ell!} \frac{\Gamma (2j+1)}{\Gamma (2j +1-2 \ell)}  \frac{  {J}_{(m-2\ell-1)/2}( t)}{ t^{(m-2\ell-1)/2}}\\
	=& \  \sqrt{\frac{\pi}{2}} \sum_{\ell=1}^{j} s^{2j-2\ell+1} \frac{2 \ell}{2^{\ell} \ell!} \frac{\Gamma (2j+1)}{\Gamma (2j +2-2 \ell)} \frac{  {J}_{(m-2\ell-1)/2}( t)}{ t^{(m-2\ell-1)/2}} \\ & +   \sqrt{\frac{\pi}{2}}   \sum_{\ell=0}^{j} s^{2j-2\ell+1}  \frac{(2j +1-2 \ell)}{2^{\ell} \ell!} \frac{\Gamma (2j+1)}{\Gamma (2j +2-2 \ell)}  \frac{  {J}_{(m-2\ell-1)/2}( t)}{ t^{(m-2\ell-1)/2}}
	\\
	=&\     \sqrt{\frac{\pi}{2}} \sum_{\ell=1}^{j}  s^{2j-2\ell+1} \frac{(2j+1)}{2^{\ell} \ell!} \frac{\Gamma (2j+1)}{\Gamma (2j +2-2 \ell)}    \frac{  {J}_{(m-2\ell-1)/2}( t)}{ t^{(m-2\ell-1)/2}}  \\ & +   \sqrt{\frac{\pi}{2}}\ s^{2j+1}  \ (2j+1)  \frac{  {J}_{(m-1)/2}( t)}{ t^{(m-1)/2}}\\
	=& \  \sqrt{\frac{\pi}{2}}  \sum_{\ell=0}^{j} s^{2j-2\ell+1}  \frac{1}{2^{\ell} \ell!} \frac{\Gamma (2j+2)}{\Gamma (2j+2 -2 \ell)}  \frac{  {J}_{(m-2\ell-1)/2}( t)}{ t^{(m-2\ell-1)/2}},
	\end{align*}
	as required. 
	The inductive step in the case $\lambda$ odd is treated similarly.
\end{proof}

\begin{remark}\label{remarkbound}
	The explicit form for the kernel obtained in the preceding theorem,  
	together with the Helmholtz relations, yields its polynomial boundedness 
	similar to Lemma 5.2 and Theorem 5.3 in \cite{DBXu}, which can be proven in exactly the same fashion. 
	This ensures the corresponding integral transforms to be well-defined and continuous on $\mathcal{S}(\mathbb{R}^m)$.
\end{remark}

Next, we consider some specific cases of operators manifested in \hyperref[theoTabc]{Theorem~\ref*{theoTabc}}. 
The function $F_{abc}$ in \eqref{Tabc} contains three parameters, each having four possible values (up to modulo 4 congruence). The role of the parameter $a$ is but a scalar multiplicative factor, so we take $a=0$ in the following.

Putting $c=0$, \eqref{Tabc} reduces to the operator exponential 
\begin{equation}\label{Tb}
T_b =  \e^{i\frac{\pi}{2} b{\Om}
}\e^{i \frac{\pi}{2}  ( h-\frac{m}{2} )   },
\end{equation}
where we used $\cE_{0101}(x) = E_1(x)= x^2$. 
When written as an integral transform we find that its kernel is given by 
\begin{equation}\label{kernelb}
K_m(x,y) =  \bigl(-i\, \partial_s \bigr)^{\lambda}\bigl(    \cos(s) +  i^{b+1} \sin(s) \bigr)= i^{b\lambda^2} \cos(\langle x, y\rangle)+ i^{b(\lambda+1)^2+1} \sin(\langle x, y\rangle).
\end{equation}
Here we distinguish 4 possible scenarios for the value of $b$:
\begin{itemize}
	\item
	For $b\equiv 0 \pmod{4}$, the operator exponential \eqref{Tb} is precisely the classical Fourier transform and \eqref{kernelb} indeed gives
	$
	K_m(x,y) =\cos(\langle x, y\rangle)+ i\, \sin(\langle x, y\rangle) =  e^{i\langle x, y\rangle}
	$.
	\item
	Taking $b \equiv 2 \pmod{4}$ and multiplying \eqref{kernelb} by $ \e^{{i\pi}\lambda}$, we get the kernel
	$\cos(\langle x, y\rangle)- i\, \sin(\langle x, y\rangle) =  e^{-i\langle x, y\rangle}$ 
	of the inverse Fourier transform and hence
	\[
	\cF^{-1} =  \e^{{i\pi}\lambda} \e^{{i\pi}\Om}
	\e^{i \frac{\pi}{2}  ( h-\frac{m}{2} )   }= 
	\e^{-i \frac{\pi}{2}  ( h-\frac{m}{2} )   }.
	\]
	As $\cF^{-1} = \cF^3$, we also have
	\[
	\cF^2= \e^{{i\pi}(\Om+\lambda)}= \e^{i \pi  ( h-\frac{m}{2} )   } .
	\]
	\item 
	The other 
	cases for the value of $b$, namely $b\equiv 1 \pmod{4}$ and $b\equiv 3 \pmod{4}$, give rise to another pair of interesting transforms. Taking $b=2\lambda +1$ and multiplying by an appropriate multiplicative factor, we find the kernel $ \cos(\langle x, y\rangle)- \sin(\langle x, y\rangle)$, corresponding to the 
	operator exponential 
	\begin{equation*}
	\e^{i\frac{\pi}{2}\lambda^2} \e^{i\frac{\pi}{2} (2\lambda +1){\Om}}\e^{i \frac{\pi}{2}  ( h-\frac{m}{2} )   }.
	\end{equation*}
	Furthermore, the operator exponential 
	\begin{equation*}
	\e^{i\frac{\pi}{2} (\lambda^2+2\lambda)} \e^{i\frac{\pi}{2} (2\lambda -1){\Om}}\e^{i \frac{\pi}{2}  ( h-\frac{m}{2} )   }
	\end{equation*}
	can be written as an integral transform whose kernel is given by
	$
	\cos(\langle x, y\rangle)+ \sin(\langle x, y\rangle)
	$. 
	This is the cosine-and-sine or Hartley kernel of the integral transform known as the Hartley transform. 
	The Hartley transform is a real linear operator that is symmetric and Hermitian \cite{Brace,Bracewell}. Moreover, it is a unitary operator that is its own inverse. 
\end{itemize}
We summarize this in the following table
\[
\begin{array}{cc}
\tT = T \circ \cF^{-1} & K_m(x,y) \\ \hline
\e^{i\frac{\pi}{2} b{\Om}}&  i^{b\lambda^2} \cos(\langle x, y\rangle)+ i^{b(\lambda+1)^2+1} \sin(\langle x, y\rangle)\\
1 &  \cos(\langle x, y\rangle)+ i  \sin(\langle x, y\rangle) \\
\e^{{i\pi}\lambda} \e^{{i\pi}\Om}&\cos(\langle x, y\rangle)- i \sin(\langle x, y\rangle)\\
\e^{i\frac{\pi}{2}\lambda^2} \e^{i\frac{\pi}{2} (2\lambda +1){\Om}}& \cos(\langle x, y\rangle)- \sin(\langle x, y\rangle) \\
\e^{i\frac{\pi}{2} (\lambda^2+2\lambda)} \e^{i\frac{\pi}{2} (2\lambda -1){\Om}} &  \cos(\langle x, y\rangle)+ \sin(\langle x, y\rangle)
\end{array}
\]
Note that the kernel of all of these integral transforms is of the form 
\[
K_m(x,y) =
\cos(\langle x, y\rangle) + i^{d} \sin(\langle x, y\rangle)
\]
for some integer $d$. As $ \cos(\langle x, y\rangle)$ and $\sin(\langle x, y\rangle)$ are given by the real and imaginary parts of the Fourier transform,  an integral transform with such a kernel coincides with
\[
\frac12 \Bigl( (\cF  f)(y) + (\cF  f)(-y )\Bigr) + i^{d-1} \frac12 \Bigl(  (\cF  f)(y) - (\cF  f)(-y) \Bigr)
\rlap{.}
\]

Finally, we consider one more special instance. Putting $c=2$ (and again $a=0$), \eqref{kernelabc} reduces to the kernel
\[
K_m(x,y) =    i^{b+1} i^{-\lambda} \sin\Bigl(s+\lambda\frac{\pi}{2} \Bigr)+  i^{-\lambda}    \sqrt{\frac{\pi}{2}} \sum_{\ell =0}^{\left\lfloor  \frac{m-2}{4} \right\rfloor} s^{\frac{m}{2}-1-2 \ell}\    \frac{1}{2^{\ell} \ell!} \frac{\Gamma(\frac{m}{2})}{\Gamma(\frac{m}{2}-2\ell)}   \frac{ {J}_{(m-2\ell-3)/2}(t)}{t^{(m-2\ell-3)/2}}\rlap{,}
\]
corresponding to the operator exponential
\begin{equation*}
T_b =  \e^{i\frac{\pi}{2} ( \frac{3}{2}{\Om}^2 - \frac{3}{2} {\Om}+ b{\Om})
}\e^{i \frac{\pi}{2}  ( h-\frac{m}{2} )   }.
\end{equation*}
For $m=2$ this kernel becomes
\[
K_2(x,y) =   i^{b+1} \sin(\langle x,y\rangle)+  \cos\Bigl( \sqrt{\lvert x \rvert^2 \lvert y \rvert^2 - \langle x,y\rangle^2}\Bigr) .
\]

\subsubsection{Odd dimension}

While \hyperref[lemmak]{Lemma~\ref*{lemmak}} remains valid for odd dimension $m$, we have no specific formulas for dimension $m=3$ comparable to those we used for dimension $m=2$, i.e.~formulas (\ref{cost})--(\ref{sins}). 
We do have another way to obtain a closed formula for a kernel of the form \eqref{kern}. This other approach holds for a more restricted class of operators (for even dimension this class is already included in \hyperref[theoTabc]{Theorem~\ref*{theoTabc}}) but it has the advantage that we can also use this in odd dimension. 

\begin{lemma}\label{lemmasc}
	If the action of an operator $T$ on the Hermite basis $\{\phi_{j,k,\ell}\}$ is given by 
	\[ T \, \phi_{j,k,\ell} = \mu_{j,k} \, \phi_{j,k,\ell}, \]
	with eigenvalues $\mu_{j,k}\in \mC$ 
	that satisfy
	$
	\mu_{j+1,k} =- \mu_{j,k}$ and $
	\mu_{j,k+2} =- \mu_{j,k}
	$ 
	for $j, k \in \mZ_{\geq0}$. 
	Then, on $\{\phi_{j,k,\ell}\}$, $T$ can be written as an integral transform 
	\[
	(T f)  (y) = \frac{1}{(2\pi)^{m/2}} \int_{\mathbb{R}^m}\! K_m(x,y) \, f(x) \, \mathrm{ d}x
	\]
	with kernel 
	\begin{equation}\label{kerncs}
	K_m(x,y) =\mu_{0,0} \cos(\langle x, y\rangle)+\mu_{0,1} \sin(\langle x, y\rangle).
	\end{equation}
\end{lemma}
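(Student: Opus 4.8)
The plan is to reduce the statement to the Gegenbauer--Bessel series already produced in \hyperref[intr]{Proposition~\ref*{intr}} and then to sum that series in closed form by comparing it with the Fourier transform and its inverse. First I would unpack the two hypotheses on the eigenvalues. The relation $\mu_{j+1,k}=-\mu_{j,k}$ yields, by induction on $j$, exactly $\mu_{j,k}=(-1)^j\mu_{0,k}$, which is relation \eqref{mujk}; the relation $\mu_{j,k+2}=-\mu_{j,k}$ shows that $\mu_{0,k}$ is $4$-periodic in $k$ and entirely fixed by $\mu_{0,0}$ and $\mu_{0,1}$, taking the values $\mu_{0,0},\mu_{0,1},-\mu_{0,0},-\mu_{0,1}$ for $k\equiv 0,1,2,3\pmod 4$. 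Since the computation in the proof of \hyperref[intr]{Proposition~\ref*{intr}} uses only \eqref{mujk} together with the reproducing kernel identity \eqref{reprod} and the Laguerre--Bessel identity \eqref{lagbes}, and not the full Helmholtz hypothesis, it applies verbatim here: on $\{\phi_{j,k,\ell}\}$ the operator $T$ coincides with the integral transform whose kernel is the series \eqref{kern} with these eigenvalues $\mu_{0,k}$.

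Next I would sum this series by recognizing it as a combination of the kernels of $\cF$ and $\cF^{-1}$. By \eqref{eigF} the Fourier transform has eigenvalues $\mu_{0,k}=i^k$ and kernel $\e^{i\langle x,y\rangle}$, while its inverse has $\mu_{0,k}=(-i)^k$ and kernel $\e^{-i\langle x,y\rangle}$; feeding each of these into \eqref{kern} gives the Gegenbauer--Bessel expansions of $\e^{\pm i\langle x,y\rangle}$. Forming the half-sum and the half-difference then produces
\[
\cos(\langle x,y\rangle)=2^\lambda\Gamma(\lambda)\sum_{k=0}^{+\infty}(k+\lambda)\,\tfrac{i^k+(-i)^k}{2}\,z^{-\lambda}J_{k+\lambda}(z)\,C_k^\lambda(w),
\]
\[
\sin(\langle x,y\rangle)=2^\lambda\Gamma(\lambda)\sum_{k=0}^{+\infty}(k+\lambda)\,\tfrac{i^k-(-i)^k}{2i}\,z^{-\lambda}J_{k+\lambda}(z)\,C_k^\lambda(w),
\]
so the two scalar sequences $\tfrac{i^k+(-i)^k}{2}$ and $\tfrac{i^k-(-i)^k}{2i}$ serve as the building blocks for the kernel. (Alternatively one could cite the plane-wave expansion of $\e^{izw}$ directly, but routing through $\cF$ and $\cF^{-1}$ keeps the argument self-contained within the paper.)

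Finally I would match coefficients. The sequence $\tfrac{i^k+(-i)^k}{2}$ is supported on even $k$ and equals $(-1)^{k/2}$ there, while $\tfrac{i^k-(-i)^k}{2i}$ is supported on odd $k$ and equals $(-1)^{(k-1)/2}$; these are precisely the sign-alternating, two-periodic patterns dictated by $\mu_{0,k+2}=-\mu_{0,k}$, so that $\mu_{0,k}=\mu_{0,0}\tfrac{i^k+(-i)^k}{2}+\mu_{0,1}\tfrac{i^k-(-i)^k}{2i}$ for every $k\in\mZ_{\geq0}$. Substituting this into \eqref{kern} and using linearity term by term collapses the series to $K_m(x,y)=\mu_{0,0}\cos(\langle x,y\rangle)+\mu_{0,1}\sin(\langle x,y\rangle)$, which is \eqref{kerncs}. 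The only genuine subtlety, and hence the step I would treat most carefully, is the legitimacy of invoking \hyperref[intr]{Proposition~\ref*{intr}} in the absence of the Helmholtz relations; this is harmless because that proposition's argument needs only the eigenvalue relation \eqref{mujk}, which the present hypotheses supply directly. The remaining manipulations are elementary termwise identities.
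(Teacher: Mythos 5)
Your proposal is correct and follows essentially the same route as the paper: reduce to the series kernel \eqref{kern} via the eigenvalue computation of \hyperref[intr]{Proposition~\ref*{intr}} (whose proof indeed needs only relation \eqref{mujk}), split by the parity of $k$, and recognize the resulting even/odd Gegenbauer--Bessel series as $\cos(\langle x,y\rangle)$ and $\sin(\langle x,y\rangle)$. The only difference is cosmetic: the paper cites formulas (44) and (45) of Erd\'elyi Vol.~II, \S7.15 for those two summations, whereas you rederive them as the half-sum and half-difference of the plane-wave expansions of $\e^{\pm i\langle x,y\rangle}$ obtained from $\cF$ and $\cF^{-1}$.
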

\begin{proof}Let $T$ be as specified in the lemma. From the conditions on its eigenvalues we know that the spectrum of $T$ is completely determined by its eigenvalues $\mu_{0,0}$ and $\mu_{0,1}$, while the others follows from 
	\[
	\mu_{j,2n} =(-1)^j(-1)^n \mu_{0,0},
	\qquad
	\mu_{j,2n+1} =(-1)^j (-1)^n \mu_{0,1}.
	\]
	From the proof of \hyperref[intr]{Proposition~\ref*{intr}} we know that
	the integral transform 
	\[
	(T f)  (y) = \frac{1}{(2\pi)^{m/2}} \int_{\mathbb{R}^m}\! K_m(x,y) \, f(x) \, \mathrm{ d}x
	\]
	with kernel $K_m(x,y)$, given by \eqref{kern},
	will have the same eigenvalue spectrum as that of $T$. 
	
	Distinguishing between even and odd values of $k$, the series in $K_m(x,y)$ becomes
	\begin{align*}
	K_m(x,y) = & \  \mu_{0,0}   \, {2}^{\lambda}\, \Gamma ( \lambda )    \sum_{n=0}^{+\infty} (2n+\lambda)\,  (-1)^n  \, z^{-\lambda}J_{2n+\lambda}( z) \,C^{\lambda}_{2n}(w) \\ 
	&+  \mu_{0,1}  \,  {2}^{\lambda}\, \Gamma ( \lambda )    \sum_{n=0}^{+\infty} (2n+1+\lambda)\, (-1)^n \, z^{-\lambda}J_{2n+1+\lambda}( z) \,C^{\lambda}_{2n+1}(w).
	\end{align*}
	The desired result now follows from formulas (44) and (45) of \cite[Vol.~{II}, section 7.15]{Erde}.
\end{proof}
Note that the condition $
\mu_{j,k+2} =- \mu_{j,k}
$ 
for $j, k \in \mZ_{\geq0}$ immediately implies the eigenvalues being 4-periodic in the index $k$. 
In even dimension, one easily verifies that the operator exponential $T_b$ given by \eqref{Tb} 
satisfies the conditions of \hyperref[lemmasc]{Lemma~\ref*{lemmasc}}.
Indeed, we have already obtained its kernel to be \eqref{kernelb}, which is of the form \eqref{kerncs}. 

Now, in the odd dimensional case, we find an operator with suitable eigenvalues by constructing a function 
that alternates between two values modulo 4, as is the case for the function in the exponent of $T_b$ in even dimension. 
The function
\[
F_{ab}(x) =a+b\,\bigl( \cD_{0110}(x)  + \cD_{0011}(x)  +2\cD_{0010}(x)  \bigr) \rlap{.}
\]
meets this requirement as for $k\in\mathbb{Z}_{\geq0}$, $F_{ab}\bigl( 2k\!+\!\tfrac12\bigr) \equiv a$ and $F_{ab}\bigl( 2k\!+\!1\!+\!\tfrac12\bigr) \equiv a+b$.
This function allows us to state the following theorem, where we have plugged in values for $a$ and $b$ to yield the simplest form for the kernel function.

\begin{theorem}\label{theoTd}
	In odd dimension, let $T_d$ be the operator exponential  
	\begin{equation*}
	T_d =  \e^{i\frac{\pi}{2}d F(\sqrt{\Om})
	}\e^{i \frac{\pi}{2}  ( h-\frac{m}{2} )   }
	\end{equation*}
	with $d \in \{0,1,2,3\}$ and 
	\[
	F\bigl(\sqrt{\Om}\bigr) =(m-2)\Bigl( \cD_{0110}\bigl(\sqrt{\Om}\bigr)  + \cD_{0011}\bigl(\sqrt{\Om}\bigr)  +2\cD_{0010}\bigl(\sqrt{\Om}\bigr)  \Bigr)+  \Bigl( \frac{m+1}{2} \Bigr)^2 \rlap{.}
	\]
	Then $T_d$ can be written as an integral transform 
	\[
	(T f)  (y) = \frac{1}{(2\pi)^{m/2}} \int_{\mathbb{R}^m}\! K_m(x,y) \, f(x) \, \mathrm{ d}x
	\]
	whose kernel is given by
	\begin{equation*}
	K_m(x,y) = \cos\bigl(\langle x, y\rangle\bigr)+i^{d+1} \sin\bigl(\langle x, y\rangle\bigr)\rlap{.}
	\end{equation*}
\end{theorem}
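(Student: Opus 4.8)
The plan is to show that $T_d$ meets the hypotheses of \hyperref[lemmasc]{Lemma~\ref*{lemmasc}} and then to read its kernel off from that lemma. Arguing exactly as in the proof of \hyperref[lemmak]{Lemma~\ref*{lemmak}}, and using the eigenvalues \eqref{evO} of $\Om$, the action $h\,\phi_{j,k,\ell}=\big(2j+k+\tfrac{m}{2}\big)\,\phi_{j,k,\ell}$, and $\lambda=(m-2)/2$, I would first record that
\[
T_d\,\phi_{j,k,\ell}=i^{\,d\,F(k+\lambda)}\,i^{\,2j+k}\,\phi_{j,k,\ell},
\]
so that $\mu_{j,k}=i^{\,d\,F(k+\lambda)}\,i^{\,2j+k}$. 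Everything then reduces to the two eigenvalue recurrences demanded by \hyperref[lemmasc]{Lemma~\ref*{lemmasc}} together with the evaluation of the base eigenvalues $\mu_{0,0}$ and $\mu_{0,1}$.

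The recurrence $\mu_{j+1,k}=-\mu_{j,k}$ is immediate, since raising $j$ by one multiplies $i^{2j+k}$ by $i^2=-1$. For $\mu_{j,k+2}=-\mu_{j,k}$ the factor $i^{2j+k}$ already supplies the sign, so it suffices to check that $d\,F(k+2+\lambda)\equiv d\,F(k+\lambda)\pmod 4$. Writing $m=2p+1$ (so that for odd $m\ge 3$ the argument $k+\lambda=k+p-\tfrac12\ge\tfrac12$ is a positive half-integer) and $F=(m-2)\,G+\big(\tfrac{m+1}{2}\big)^2$ with $G=\cD_{0110}+\cD_{0011}+2\,\cD_{0010}$, \hyperref[moddc]{Theorem~\ref*{moddc}} (and Table~4) shows that, modulo $4$, $G$ equals $0$ on $2\mZ+\tfrac12$ and $1$ on $2\mZ+\tfrac32$; in particular $G$ is invariant under a shift of its argument by $2$. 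Hence $F(k+2+\lambda)\equiv F(k+\lambda)\pmod4$, the second recurrence holds, and \hyperref[lemmasc]{Lemma~\ref*{lemmasc}} yields the kernel $\mu_{0,0}\cos(\langle x,y\rangle)+\mu_{0,1}\sin(\langle x,y\rangle)$.

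It then remains to compute $\mu_{0,0}=i^{\,d\,F(\lambda)}$ and $\mu_{0,1}=i^{\,d\,F(1+\lambda)+1}$, and this mod-$4$ bookkeeping is the only genuinely technical point and the main obstacle: one must verify that the coefficient $m-2$ and the additive constant $\big(\tfrac{m+1}{2}\big)^2$ are tuned precisely so that $F(\lambda)\equiv0$ and $F(1+\lambda)\equiv1\pmod4$ uniformly in the dimension. I would do this by a short split on $m\bmod4$: reading $G(\lambda)$ and $G(1+\lambda)$ from the parity rule above, and combining with $(m-2)\equiv3$ or $1$ and $\big(\tfrac{m+1}{2}\big)^2\equiv1$ or $0\pmod4$ according as $m\equiv1$ or $3\pmod4$, both identities fall out in each case. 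Consequently $\mu_{0,0}=i^0=1$ and $\mu_{0,1}=i^{d+1}$, and substituting these into the kernel from \hyperref[lemmasc]{Lemma~\ref*{lemmasc}} gives exactly $\cos(\langle x,y\rangle)+i^{d+1}\sin(\langle x,y\rangle)$, as claimed.
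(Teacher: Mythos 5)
Your proposal is correct and follows essentially the same route as the paper: compute the eigenvalues $\mu_{j,k}=i^{dF(k+\lambda)}i^{2j+k}$, verify the two recurrences required by \hyperref[lemmasc]{Lemma~\ref*{lemmasc}}, and read off the kernel from $\mu_{0,0}=1$ and $\mu_{0,1}=i^{d+1}$. The only difference is that you spell out the mod-$4$ bookkeeping (the $2$-periodicity of $\cD_{0110}+\cD_{0011}+2\cD_{0010}$ and the case split on $m\bmod 4$) that the paper leaves implicit by simply listing the first four eigenvalues and citing \hyperref[theo4per]{Theorem~\ref*{theo4per}} and \hyperref[eigenv]{Lemma~\ref*{eigenv}}.
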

\begin{proof}
	The operator $T_d$ is 4-periodic in the index $k$ by \hyperref[theo4per]{Theorem~\ref*{theo4per}} and its eigenvalues are given by
	\[
	\e^{i\frac{\pi}{2}dF(\sqrt\Om) }  \e^{i\frac{\pi}{2} ( h-\frac{m}{2} ) }\, \phi_{j,k,\ell}= \e^{i\frac{\pi}{2}dF(k+\lambda) }  \e^{i\frac{\pi}{2}(2j+ k) }\, \phi_{j,k,\ell}\rlap{\,.}
	\]
	The first four eigenvalues with $j=0$ are
	\[
	\mu_{0,0} = 1 ,\quad\mu_{0,1} = i^{d+1},\quad\mu_{0,2} = -1,\quad\mu_{0,3} = -i^{d+1},
	\]
	Using also \hyperref[eigenv]{Lemma~\ref*{eigenv}}, we see that $T_d$ satisfies the conditions of \hyperref[lemmasc]{Lemma~\ref*{lemmasc}} and thus find the desired kernel.
\end{proof}


\subsection{Uncertainty principle}
\label{ssUp}

As an application of our previous results, we show how to obtain generalized uncertainty principles (following the same strategy as developed for the Dunkl transform in \cite{MR1698045,MR1818904} and later generalized and streamlined in \cite{Orsted2}) for any continuous integral transform
\[
(T f) (y) = \int_{\mR^m} K(x,y)f(x) \,\mathrm{d}x
\] 
on $\cS(\mR^m)$ that satisfies all the properties of \hyperref[theo4per]{Theorem~\ref*{theo4per}}. Hence, it applies in particular to the operators from e.g.~\hyperref[theoTabc]{Theorem~\ref*{theoTabc}} and \hyperref[theoTd]{Theorem~\ref*{theoTd}} as they have polynomially bounded kernels (see \hyperref[remarkbound]{Remark~\ref*{remarkbound}}). 
By \hyperref[unitary]{Proposition~\ref*{unitary}}, such a $T$ has a unitary extension to $L^2(\mR^m)$ and 
we may now establish the following lemma.

\begin{lemma}
	\label{Heis_lem}
	If the continuous integral transform $T$ on $\cS(\mR^m)$ satisfies the properties {(i)}--{(iii)}, 
	then for its unitary extension to $L^2(\mR^m)$ the following inequality holds:
	\begin{equation}
	\label{Heis}
	\bigl\| \lvert x\rvert f \bigr\|^2 + \bigl\| \lvert x\rvert T(f)  \bigr\|^2 \geq m \| f \|^2, \qquad f\in \cS(\mR^m),
	\end{equation}
	with $\| \cdot \|$ the $L^2$ norm. 
	The inequality becomes an equality if and only if $f = \alpha e^{-|x|^2/2}$ with $\alpha \in \mR$.
\end{lemma}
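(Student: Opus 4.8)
The plan is to remove $T$ from the estimate and reduce it to a classical spectral bound for the harmonic oscillator. First I would rewrite the second term using the Helmholtz relations of property~(i). Writing $\langle\cdot,\cdot\rangle$ for the $L^2(\mR^m)$ inner product and reading $\bigl\||x|\,T(f)\bigr\|^2$ as $\langle |y|^2\,Tf,\,Tf\rangle$ in the target variable $y$, the relation $T\circ\Delta_x = -\,|y|^2\circ T$ gives $|y|^2\,Tf = -\,T(\Delta_x f)$, so that $\bigl\||x|\,T(f)\bigr\|^2 = -\langle T(\Delta_x f),\,Tf\rangle$. By Proposition~\ref{unitary} the operator $T$ has a unitary extension to $L^2(\mR^m)$, so I may cancel $T$ from both arguments to obtain $-\langle \Delta_x f, f\rangle$; an integration by parts, legitimate for $f\in\cS(\mR^m)$ as all boundary terms vanish, then yields
\[
\bigl\||x|\,T(f)\bigr\|^2 = \langle -\Delta_x f,\,f\rangle = \sum_{i=1}^m \|\partial_{x_i} f\|^2 .
\]

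After this reduction $T$ has disappeared and the claim becomes the purely analytic inequality $\langle (|x|^2-\Delta_x)f,\,f\rangle \geq m\|f\|^2$, which I would prove by a sum-of-squares factorization. Setting $A_i \defeq x_i + \partial_{x_i}$, whose formal $L^2$-adjoint on $\cS(\mR^m)$ is $A_i^\ast = x_i - \partial_{x_i}$ (because $x_i$ is symmetric and $\partial_{x_i}$ skew-symmetric), the commutator $[\partial_{x_i},x_i]=1$ gives $A_i^\ast A_i = x_i^2 - \partial_{x_i}^2 - 1$. Summing over $i$ produces the operator identity $|x|^2-\Delta_x = \sum_{i=1}^m A_i^\ast A_i + m$, so that
\[
\langle (|x|^2-\Delta_x)f,\,f\rangle = \sum_{i=1}^m \|A_i f\|^2 + m\|f\|^2 \geq m\|f\|^2 .
\]
Since each term $\|A_i f\|^2$ is nonnegative, equality forces $A_i f = 0$ for all $i$, i.e.\ $\partial_{x_i} f = -x_i f$; separating variables yields $f = \alpha\,e^{-|x|^2/2}$, and conversely this Gaussian is annihilated by every $A_i$ and hence saturates the bound.

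I do not expect a serious obstacle: once property~(i) and the unitarity of Proposition~\ref{unitary} are used, the role of $T$ is entirely formal and the content is the standard ground-state estimate. The only points demanding care are bookkeeping ones — correctly tracking the source and target variables when invoking the Helmholtz relation, and justifying the integration by parts and the adjoint identities on $\cS(\mR^m)$. An equally short alternative for the reduced inequality is to expand $f$ in the Hermite basis $\{\phi_{j,k,\ell}\}$ and use $|x|^2-\Delta_x = 2h$ with $h\,\phi_{j,k,\ell}=(2j+k+\tfrac{m}{2})\phi_{j,k,\ell}$: every eigenvalue of $|x|^2-\Delta_x$ equals $4j+2k+m\geq m$, with the minimum attained only at $j=k=0$, i.e.\ on $\phi_{0,0,\ell}=e^{-|x|^2/2}$, recovering both the inequality and the equality case.
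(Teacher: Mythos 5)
Your proposal is correct. The first half --- using the Helmholtz relation $T\circ\Delta_x=-|y|^2\circ T$ together with the unitarity from Proposition~\ref{unitary} to convert $\bigl\||x|\,T(f)\bigr\|^2$ into $-\langle\Delta_x f,f\rangle$, so that the left-hand side of \eqref{Heis} becomes $\langle(|x|^2-\Delta_x)f,f\rangle$ --- is exactly the paper's argument. Where you diverge is in how you finish: the paper writes $|x|^2-\Delta_x=2h$ and invokes the spectral fact that the smallest eigenvalue of $h$ is $m/2$, attained only on the ground state $\phi_{0,0,\ell}=e^{-|x|^2/2}$, whereas your main route is the sum-of-squares factorization $|x|^2-\Delta_x=\sum_i A_i^\ast A_i+m$ with $A_i=x_i+\partial_{x_i}$. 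The two are equivalent in content, but yours is self-contained: it does not rely on the Hermite eigenbasis or the representation-theoretic eigenvalue formula for $h$, and it delivers the equality case directly from $A_if=0$ rather than from identifying the lowest-weight vector. You even record the paper's spectral argument as your stated alternative, so nothing is missing; the only cosmetic remark is that the lemma's normalization of the equality case ($\alpha\in\mR$) is inherited from the paper and is not affected by your argument.
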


\begin{proof}
	We can compute that for $f \in \cS(\mR^m)$
	\begin{align*}
	\bigl\| \lvert x\rvert T(f)  \bigr\|^2  & = \langle |x| T(f), |x| T(f) \rangle\\
	&= \langle |x|^2 T(f), T(f) \rangle\\
	&= - \langle T( \Delta_x f), T(f) \rangle\\
	&=  - \langle\Delta_x f, f \rangle
	\end{align*}
	where we used the Helmholtz relations and the unitarity of $T$. Using this result, the left-hand side of (\ref{Heis}) equals
	\begin{align*}
	\bigl\| \lvert x\rvert f \bigr\|^2+ \bigl\| \lvert x\rvert T(f)  \bigr\|^2 &= \langle |x|^2 f, f \rangle - \langle\Delta_x f, f \rangle \\
	&=  \langle ( |x|^2 - \Delta_x) f, f \rangle\\
	&=  2 \langle h f, f \rangle\\
	& \geq m \langle f,f \rangle
	\end{align*}
	as the smallest eigenvalue of $h$ is $m/2$.
	The equality then follows as this minimal eigenvalue corresponds to the ground state $f = \alpha e^{-|x|^2/2}$.
\end{proof}

Subsequently we establish the following generalized uncertainty principle.

\begin{theorem}\label{theoUP}
	Let
	\[
	(T f) (y) = \int_{\mR^m} K(x,y)f(x)  \, \mathrm{ d} x
	\] 
	be a continuous integral transform on $\cS(\mR^m)$ that satisfies the properties {(i)}--{(iii)}, 
	then one has the following uncertainty principle:
	\[
	\bigl\| \lvert x\rvert f \bigr\|  \cdot\bigl \| \lvert x\rvert  T(f )\bigr\| \geq \frac{m}{2} \| f \|^2
	\]
	for $f \in \cS(\mR^m)$.
\end{theorem}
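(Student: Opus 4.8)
The plan is to derive the product inequality from the additive inequality of Lemma~\ref{Heis_lem} by exploiting the homogeneity of the kernel under dilations and then optimizing over the dilation parameter. The essential input is the scaling relation $K_m(cx,y)=K_m(x,cy)$ from Lemma~\ref{homogenity}, which controls how $T$ interacts with a rescaling of its argument.

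First I would introduce, for $c>0$, the dilated function $f_c(x)\defeq f(cx)$, which again lies in $\cS(\mR^m)$, and compute the action of $T$ on it. A change of variables together with the kernel scaling gives $(Tf_c)(y)=c^{-m}(Tf)(y/c)$. From this one records the elementary identities
\begin{align*}
\|f_c\|^2 &= c^{-m}\|f\|^2, \\
\bigl\||x|f_c\bigr\|^2 &= c^{-m-2}\bigl\||x|f\bigr\|^2, \\
\bigl\||x|T(f_c)\bigr\|^2 &= c^{2-m}\bigl\||x|T(f)\bigr\|^2,
\end{align*}
each following from a single substitution ($u=cx$, respectively $v=y/c$) in the defining integrals.

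Next I would apply the additive inequality \eqref{Heis} of Lemma~\ref{Heis_lem} to $f_c$ and insert the identities above. After multiplying through by $c^{m}$ the dimension-dependent powers cancel, leaving the one-parameter family of inequalities
\[
c^{-2}\bigl\||x|f\bigr\|^2 + c^{2}\bigl\||x|T(f)\bigr\|^2 \geq m\|f\|^2,\qquad c>0.
\]
Writing $A=\bigl\||x|f\bigr\|$ and $B=\bigl\||x|T(f)\bigr\|$, for $f\neq 0$ both quantities are strictly positive (a nonzero $f$ forces $A>0$, while unitarity of $T$ from Proposition~\ref{unitary} gives $Tf\neq 0$ and hence $B>0$). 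Choosing $c^{2}=A/B$ --- equivalently, invoking the AM--GM bound $c^{-2}A^{2}+c^{2}B^{2}\geq 2AB$ with equality at this value --- collapses the left-hand side to exactly $2AB$, so that $2AB\geq m\|f\|^{2}$, which is the asserted uncertainty principle; the case $f=0$ is trivial.

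I do not anticipate a genuine obstacle: this is the standard dilation-and-optimize reduction of a product uncertainty inequality to an additive one. The only step demanding real care is the verification of $(Tf_c)(y)=c^{-m}(Tf)(y/c)$, which rests entirely on the kernel homogeneity of Lemma~\ref{homogenity}; once that is secured, the remainder is bookkeeping of powers of $c$ followed by a single AM--GM estimate.
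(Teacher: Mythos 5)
Your proposal is correct and follows essentially the same route as the paper: apply the additive inequality of Lemma~\ref{Heis_lem} to the dilated function $f_c(x)=f(cx)$, use the kernel homogeneity of Lemma~\ref{homogenity} to track the scaling of $\bigl\|\lvert x\rvert T(f_c)\bigr\|$, and then optimize over $c$ (the paper's choice $c=\sqrt{\bigl\|\lvert x\rvert f\bigr\|/\bigl\|\lvert x\rvert T(f)\bigr\|}$ is exactly your AM--GM step). Your explicit treatment of the degenerate case $f=0$ and the positivity of the two norms is a minor tidying the paper leaves implicit.
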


\begin{proof}
	Put, for $c >0$, $f_c(x) = f(cx)$. An easy computation then shows that
	\[
	\bigl\| \lvert x\rvert f_c \bigl \|^2 = c^{-m-2} \bigl\| \lvert x\rvert f \bigl \|^2
	\]
	and similarly
	\[
	\| f_c \|^2 = c^{-m} \|  f \|^2.
	\]
	We also have, using the homogeneity of the kernel as given in (\ref{Hom_K}),
	\begin{align*}
	(Tf_c) (y) & = \int_{\mR^m} K(cx,y/c)f( c x) \, \mathrm{ d} x\\
	& =  c^{-m} \int_{\mR^m} K(x,y/c)f( x) \, \mathrm{ d} x\\
	& = c^{-m} (Tf) (y/c).
	\end{align*}
	As a consequence we have
	\begin{align*}
	\bigl\| \lvert x\rvert T(f_c) \bigl \|^2 & = \bigl\|  c^{-m}\lvert x\rvert (Tf)(y/c) \bigl \|^2\\
	&= c^{-2m} \int_{\mR^m} |x|^2 |(Tf)(x/c)|^2 \, \mathrm{ d} x\\
	& =c^{-m+2} \bigl\| \lvert x\rvert T(f) \bigl \|^2.
	\end{align*}
	Now substitute $f_c$ for $f$ in Lemma \ref{Heis_lem}, and apply the previously established relations. This yields:
	\[
	c^{-2} \bigl\| \lvert x\rvert f \bigl \|^2 + c^2 \bigl\| \lvert x\rvert T(f)\bigl \|^2 \geq m \| f \|^2.
	\]
	Finally put 
	\[
	c = \sqrt{\frac{\bigl\| \lvert x\rvert f \bigl \|}{\bigl\| \lvert x\rvert T(f) \bigl \|}}
	\]
	and the theorem follows.
\end{proof}

\begin{remark}
	More specialized uncertainty principles can be developed following a strategy similar to the one used in e.g.~\cite{Joh}.
\end{remark}


\section{Fourier transforms in Clifford analysis}
\label{sec:3}

In the previous section we  
obtained a class of operators satisfying the set of properties {(i)}--{(iii)}.
We did this by capitalizing on the relation between these properties, the classical Fourier transform and a realization of the Lie algebra $\mathfrak{sl}(2)$. 
Now, we wish to expand this train of thought to a broader setting. 
The aforementioned operator realization of $\mathfrak{sl}(2)$ possesses a natural generalization
to the Lie superalgebra $\mathfrak{osp}(1|2)$. 
For this reason, we turn our attention to the context of Clifford analysis and aim to find solutions by applying the techniques developed in the previous section.

We start by giving a brief overview of the framework of Clifford analysis  (see e.g.~\cite{MR697564,DSS}), a higher dimensional function theory where functions take on values in a Clifford algebra. 
The orthogonal Clifford algebra $\cC l_{m}$ is generated by the canonical basis $\{ \, e_{i}\mid  i = 1, \ldots, m\,\}$ of $\mR^m$ under the relations:
\[
\begin{array}{l}
e_i e_j + e_j e_i = 0 \qquad (i\neq j)\\
e_i^2 = -1.
\end{array}
\]
This algebra has dimension $2^m$ as a vector space over $\mR^m$, and we have \[\cC l_{m} = \bigoplus_{k=0}^{m} 
\mbox{span} \{ \,e_{i_{1}} e_{i_{2}} \dotsm e_{i_{k}} \mid 1 \leq i_{1} < \dotsb < i_{k} \leq m \,\}.
\]
The empty product ($k=0$) is defined as the multiplicative identity element. 

Functions taking values in $\cC l_{m}$ can be decomposed as 
\[
f = f_{0} + \sum_{i=1}^{m} e_{i}f_{i} + \sum_{i< j} e_{i} e_{j} f_{ij} + \dotsb + e_{1} \dotsm e_{m} f_{1 \dotso m} 
\]
with  $f_{0}, f_{i}, f_{ij}, \ldots, f_{1 \ldots m}$ all real-valued functions on $\mR^{m}$. 
We identify the point $x = (x_{1}, \dotsc, x_{m})$ in $\mR^{m}$ with the vector variable $\ux$ given by 
\[
\ux = \sum_{j=1}^{m}  e_{j}x_{j}.
\]
The Clifford product of two vectors splits into a scalar part and a bivector part, given by respectively minus the inner product of the two vectors and the outer product or wedge product: 
\[
\ux \, \uy =- \langle x, y \rangle + \ux \wedge \uy,
\]
with $- \langle x, y \rangle = -\sum_{j=1}^{m} x_{j} y_{j} = \frac{1}{2} (\ux \, \uy + \uy  \, \ux)$ and $
\ux \wedge \uy = \sum_{j<k} e_{jk} (x_{j} y_{k} - x_{k}y_{j}) = \frac{1}{2} (\ux \, \uy - \uy \,  \ux).
$ 

Furthermore, we introduce a first order vector differential operator by
\[
\upx = \sum_{j=1}^{m} \partial_{x_{j}} e_{j}.
\]
This operator is the so-called Dirac operator. Together with the vector variable they satisfy the relations
\[
\upx^{2} =  -\Delta_x,\qquad
\ux^{2} = -|x|^{2},\qquad
\{\ux, \upx\} = -2\mE_x -m,
\]
where $\{a,b\}=ab+ba$, and hence they generate a realization of the Lie superalgebra $\mathfrak{osp}(1|2)$, which contains the Lie algebra $\mathfrak{sl}(2) =\mathrm{span}\big \{\Delta_x,\lvert x \rvert^2,\big [\Delta_x,\lvert x \rvert^2 \big]\big \}
$ as its even part \cite{MR1773773}.

As the Clifford-valued operators $\upx$ and $\ux$ factorize the operators $-\Delta_x$ and $ -|x|^{2}$, they allow us to refine the Helmholtz relations 
to what we call the Clifford-Helmholtz relations
\begin{flalign*}
T \circ \partial_{\ux} &= - i\, \uy \circ T \\
T \circ \ux &=  - i \, \partial_{\uy} \circ T
\end{flalign*}
Every operator that satisfies this system will by definition also satisfy the Helmholtz relations. 
They form an intermediate step in the generalization of the properties 
of the Fourier transform:
\[
\biggl\{\begin{array}{rl}
T \circ \partial_{x_{j}}= - i \,y_{j}  \circ T\\
T\circ x_{j}= - i \,  \partial_{y_{j}}\circ T
\end{array}  \implies\biggl\{
\begin{array}{ll}
T \circ \partial_{\ux} = - i\, \uy \circ T \\
T \circ \ux =  - i \, \partial_{\uy} \circ T
\end{array} \implies \biggl\{
\begin{array}{ll}
T \circ   \Delta_x    =-\lvert y \rvert^2 \circ  T\\
T \circ  \lvert x \rvert^2   = - \Delta_y\circ  T
\end{array}
\]
where in the first we assume it must hold for all $j \in \{ 1,\dotsc,m\}$.

In the previous section we made extensive use of the eigenfunction basis of $\mathcal{S}(\mathbb{R}^m) \subset  L^{2}(\mathbb{R}^m)$ given by the functions
\eqref{basis}. 
In the framework of Clifford analysis the relevant function space is the space $\mathcal{S}(\mathbb{R}^m) \otimes \cC l_{m}$ which decomposes under the action of the dual pair $(\mathfrak{osp}(1|2),Spin(m) )$. 
This action leads to the following important basis 
(see \cite{MR926831})
\begin{align} \label{basisp}
\begin{split}
\psi_{2p,k,\ell}(x) &\defeq  2^p  p!\, L_{p}^{\frac{m}{2}+k-1}(|x|^{2}) \, M_{k}^{(\ell)} \, \e^{-|x|^{2}/2},\\
\psi_{2p+1,k,\ell}(x) &\defeq 2^p p!\sqrt{2}   \,  L_{p}^{\frac{m}{2}+k}(|x|^{2}) \, \ux \, M_{k}^{(\ell)} \, \e^{-|x|^{2}/2},
\end{split}
\end{align}
where $p,k \in \mathbb{Z}_{\geq0}$ and $\{\,M_k^{(\ell)} \mid \ell= 1, . . . , \dim(\cM_k)\, \}$ is a basis for $\cM_k$, 
the space of spherical monogenics of degree $k$, 
i.e.~homogeneous polynomial null-solutions of the Dirac operator of degree $k$. 
It is clear that every spherical monogenic is a spherical harmonic, indeed we have 
$\cH_k\otimes \cC l_m = \cM_k \oplus \ux  \cM_{k-1}$. 

The action of the Fourier transform on the eigenfunctions $\{\psi_{j,k,\ell}\}$ is given by
\[
\cF \psi_{j,k,\ell} = \e^{i\frac{\pi}{2}(j+k)} \, \psi_{j,k,\ell} = i^{j+k} \, \psi_{j,k,\ell}.
\]
Moreover, the functions $\{\psi_{j,k,\ell}\}$ have the following important property (see e.g.~\cite{MR926831}), comparable to property \eqref{ephi}, 
\begin{equation}\label{bpsi}
\psi_{j,k,\ell}(x) =\left(\frac{\sqrt2}{2}\left(\ux - \partial_{\ux}\right)\right)^j M_{k}^{(\ell)} \, \e^{-|x|^{2}/2}\rlap{\,.}
\end{equation}

Now, our aim is to find all operators 
$
T \colon 
\mathcal{S}(\mathbb{R}^m) \otimes \cC l_{m} \to \mathcal{S}(\mathbb{R}^m) \otimes \cC l_{m}
$
that satisfy
\begin{itemize}
	\item[(I)] the Clifford-Helmholtz relations
	\begin{flalign*}
	&T \circ \partial_{\ux} = - i\, \uy \circ T &&\\
	&T \circ \ux =  - i \, \partial_{\uy} \circ T&&
	\end{flalign*}
	
	\item[(II)] $ T \, \psi_{j,k,\ell} = \mu_{j,k} \, \psi_{j,k,\ell} \qquad \text{with }\mu_{j,k}\in \mC$
	
	\item[(III)] $T^4 = \operatorname{id}$
\end{itemize}

In line with \hyperref[sec:2]{Section~\ref*{sec:2}} we first introduce some suitable linear combinations of the operators of interest, namely $\ux$ and $\upx$.
Hereto put
\begin{equation*}
b^+ = \frac{\sqrt2}{2}\bigl(\ux - \partial_{\ux}\bigr) \quad\text{and}\quad
b^- =- \frac{\sqrt2}{2}\bigl(\ux + \partial_{\ux}\bigr).
\end{equation*}
They satisfy the relations
\begin{equation}\label{bbbb}
\big \lbrack \{b^-,b^+\}, b^{\pm} \big\rbrack=\pm 2 b^{\pm},
\end{equation}
and hence also generate a realization of the Lie superalgebra $\mathfrak{osp}(1|2)$, \cite{LSV}. 
This Lie superalgebra contains an even subalgebra isomorphic with $\mathfrak{sl}(2)$ generated by 
the even (or ``bosonic'') elements
\[
h = \frac12 \{b^-,b^+\}
, \qquad
e = \frac14\{b^+,b^+\}
,\qquad
f= -\frac14 \{b^-,b^-\}
\rlap{.}
\]
These are precisely the operators \eqref{hef} we considered in the previous section. 

Working in this realization of $\mathfrak{osp}(1|2)$ the Clifford-Helmholtz relations 
translate to the relations
\begin{align}
\begin{split}\label{Tbb}
T \circ b^+  =\phantom{-}  i \, b^+\circ T& \\
T \circ b^-  =- i \, b^-\circ T &,
\end{split}
\end{align}
and property \eqref{bpsi} can be written more compactly as
$
\psi_{j,k,\ell}(x) =\left(b^+\right)^j M_{k}^{(\ell)} \, \e^{-|x|^{2}/2}.
$

Moreover, in this realization the operators act on the eigenfunctions \eqref{basisp} in a nice way. 
We have
\begin{equation}\label{bppsi}
b^+ \, \psi_{j,k,\ell}= \, \psi_{j+1,k,\ell}
\end{equation}
and for integer $p$,
\[
b^-  \, \psi_{2p,k,\ell} = 2p \, \psi_{2p-1,k,\ell},\qquad b^-  \, \psi_{2p+1,k,\ell} = (2p+m+2k)\, \psi_{2p,k,\ell}.
\]
The action of the other operators is then as follows
\[
h \, \psi_{j,k,\ell}=(j+k+{\tfrac{m}{2}})\, \psi_{j,k,\ell},\qquad
e \, \psi_{j,k,\ell}=\frac{1}{2}\, \psi_{j+2,k,\ell}
\]
and, again for integer $p$,
\[
f \,\psi_{2p,k,\ell}=-p\, (2p-2+m+2k)\, \psi_{2p-2,k,\ell},\qquad
f \, \psi_{2p+1,k,\ell}= -p\, (2p+m+2k)\, \psi_{2p-1,k,\ell}.
\]
Note that $h$ again acts diagonally on $\psi_{j,k,\ell} $.
\begin{remark}
	For every $k\in \mZ_{\geq0}$ and $\ell\in \{ 1, \dotsc, \dim(\cM_k)\}$, the set $\{\, \psi_{j,k,\ell} \mid j \in \mZ_{\geq0}  \, \}$ forms a basis for the
	irreducible representation of the Lie superalgebra $\mathfrak{osp}(1|2)$ 
	with lowest weight $k+m/2$. This representation is a direct sum of two positive discrete series representations of 
	$\mathfrak{su}_{1,1}$. 
	
	For every $j,k\in \mZ_{\geq0}$ the set $\{\, \psi_{j,k,\ell} \mid \ell= 1, . . . , \dim(\cM_k)  \, \}$ forms a basis for an irreducible spinor representation of $Spin(m)$, when restricting the values $\cC l_{m}$ to a spinor space.
\end{remark}

Similar to the results obtained in the harmonic case, we immediately have some important consequences for an operator $T$ that satisfies the properties {(I)}--{(III)}.
\begin{lemma}\label{eigenvC}
	Let $T$ be an operator satisfying the properties {(I)}--{(III)}. There are only four possible values for the eigenvalues $
	\mu_{j,k}$ of the operator $T$, namely 
	$\mu_{j,k}\in\{1,i,-1,-i\}.
	$
	Moreover, the spectrum of eigenvalues is completely determined by the eigenvalues $\mu_{0,k}$ for $k\in \mZ_{\geq0}$; the other eigenvalues for $j>0$ follow from the relation
	\begin{equation}\label{mujkC}
	\mu_{j,k}  = i^{\,j}\,\mu_{0,k}.
	\end{equation}
\end{lemma}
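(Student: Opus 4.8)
The plan is to mirror the proof of \hyperref[eigenv]{Lemma~\ref*{eigenv}}, replacing the $\mathfrak{sl}(2)$ raising operator $e$ by the $\mathfrak{osp}(1|2)$ operator $b^+$ and keeping careful track of the intertwining factor. First, property~{(III)} forces each eigenvalue $\mu_{j,k}$ of $T$ to satisfy $(\mu_{j,k})^4 = 1$, so that $\mu_{j,k}$ is a fourth root of unity, i.e.\ an integer power of $i = \e^{i\frac{\pi}{2}}$. This immediately gives the first claim, $\mu_{j,k} \in \{1,i,-1,-i\}$.

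Next I would exploit the single-step raising relation \eqref{bppsi}, namely $\psi_{j+1,k,\ell} = b^+ \psi_{j,k,\ell}$, together with the intertwining relation $T \circ b^+ = i\, b^+ \circ T$ from \eqref{Tbb} and the eigenvalue property~{(II)}. Applying $T$ to $\psi_{j+1,k,\ell}$ and commuting it past $b^+$, the key computation reads
\[
\mu_{j+1,k}\, \psi_{j+1,k,\ell} = T\, \psi_{j+1,k,\ell} = T \circ b^+\, \psi_{j,k,\ell} = i\, b^+ \circ T\, \psi_{j,k,\ell} = i\, \mu_{j,k}\, b^+ \psi_{j,k,\ell} = i\, \mu_{j,k}\, \psi_{j+1,k,\ell},
\]
valid for all admissible $j,k,\ell$, whence $\mu_{j+1,k} = i\, \mu_{j,k}$. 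Iterating this recursion upward from $j=0$ then yields exactly the relation \eqref{mujkC}, $\mu_{j,k} = i^{\,j}\,\mu_{0,k}$, and in particular shows that the entire spectrum is determined by the base eigenvalues $\{\mu_{0,k} \mid k \in \mZ_{\geq0}\}$.

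The only substantive difference from the harmonic case is the intertwining factor: here $T$ commutes with $b^+$ up to a factor $i$, rather than the factor $-1$ obtained for $e$ in \eqref{Thef}. This is precisely why the eigenvalues become genuinely $4$-periodic in $j$ (as opposed to the $2$-periodicity of \hyperref[eigenv]{Lemma~\ref*{eigenv}}), which is consistent with $b^+$ raising the $j$-index by one step at a time on the basis \eqref{basisp}, whereas $e$ raised it in steps of two. I do not expect any genuine obstacle: once the correct $\mathfrak{osp}(1|2)$ relations \eqref{Tbb} and \eqref{bppsi} are in hand, the argument is a direct transcription of the harmonic proof, and the main point to get right is simply propagating the factor $i$ rather than $-1$ through the recursion.
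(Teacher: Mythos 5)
Your proof is correct and follows exactly the same route as the paper: property~(III) forces the eigenvalues to be fourth roots of unity, and the recursion $\mu_{j+1,k}=i\,\mu_{j,k}$ is obtained from the intertwining relation $T\circ b^+=i\,b^+\circ T$ of \eqref{Tbb} together with $\psi_{j+1,k,\ell}=b^+\psi_{j,k,\ell}$ from \eqref{bppsi}. Your remark on why the factor is $i$ rather than $-1$ (single-step raising by $b^+$ versus two-step raising by $e$) is accurate and matches the paper's computation verbatim.
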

\begin{proof}
	Property {(III)} necessitates that the eigenvalues $\mu_{j,k}$ of $T$ satisfy $(\mu_{j,k})^4=1$ and thus are integer powers of $i$. 
	Using property {(II)} and the relations  \eqref{bpsi} and \eqref{bppsi}
	, we find
	\[
	\mu_{j+1,k} \, \psi_{j+1,k,\ell} = T\, \psi_{j+1,k,\ell} = T \circ b^+ \, \psi_{j,k,\ell} =  i \, b^+ \circ T  \, \psi_{j,k,\ell} = i \, b^+\,  \mu_{j,k}  \, \psi_{j,k,\ell} = i \,\mu_{j,k} \, \psi_{j+1,k,\ell}.
	\]
	The relation \eqref{mujkC} now follows from subsequent application of $ \mu_{j+1,k} = i\,  \mu_{j,k}$.
\end{proof}

\begin{proposition}\label{intrC}
	Let $
	T \colon 
	\mathcal{S}(\mathbb{R}^m)  \otimes \cC l_{m} \to \mathcal{S}(\mathbb{R}^m)  \otimes \cC l_{m}
	$ be an operator that satisfies the properties {(I)} and {(II)}. 
	Then, on the basis $\{\psi_{j,k,\ell}\}$, the operator $T$ 
	coincides with the integral transform 
	\[
	(T f)  (y) = \frac{1}{(2\pi)^{m/2}} \int_{\mathbb{R}^m}\! K_m(x,y) \, f(x) \, \mathrm{ d}x
	\]
	with
	\begin{equation}\label{kernelC}
	K_m(x,y) = A(w,z) + (\ux \wedge \uy) \  B(w,z)
	\end{equation}
	where
	\begin{align}
	\label{seriesconv}
	\begin{split}
	A(w,z) &=  {2}^{\lambda}\, \Gamma ( \lambda )    \sum_{k=0}^{+\infty} \frac{1}{2} \big(i \, k \, \mu_{0,k-1}+(k+2\lambda) \mu_{0,k}\big)\ z^{-\lambda}J_{k+\lambda}( z) C^{\lambda}_{k}(w)\\
	B(w,z) &=  2^{\lambda+1}\, \Gamma ( \lambda +1 )    \sum_{k=1}^{+\infty}  \frac{ 1}{2}\big(i \, \mu_{0,k-1}- \mu_{0,k}\big) \ z^{-\lambda-1}J_{k+\lambda}( z) C^{\lambda+1}_{k-1}(w),
	\end{split}
	\end{align}
	Here, the notations $\lambda=(m-2)/2$, $z=|x |  |y|$ and $w = \langle x,y \rangle/z$ 
	are used. 
\end{proposition}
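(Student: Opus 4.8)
The plan is to verify directly that the integral transform with kernel \eqref{kernelC}--\eqref{seriesconv} reproduces the correct eigenvalue on every basis element, exactly as in the proof of Proposition~\ref{intr}. By Lemma~\ref{eigenvC} the spectrum is fixed by the numbers $\mu_{0,k}$ through $\mu_{j,k}=i^{\,j}\mu_{0,k}$, so it suffices to check that
\[
\frac{1}{(2\pi)^{m/2}}\int_{\mR^m} K_m(x,y)\,\psi_{j,k,\ell}(x)\,\mathrm{d}x = \mu_{j,k}\,\psi_{j,k,\ell}(y)
\]
for all $j,k,\ell$. The genuinely new feature compared to the harmonic case is that a purely scalar kernel cannot suffice: within a fixed harmonic degree the space $\cH_k\otimes\cC l_m=\cM_k\oplus\ux\cM_{k-1}$ carries two \emph{different} eigenvalues, and it is precisely the bivector part $(\ux\wedge\uy)B$ that is needed to tell these two components apart.

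First I would separate the radial and angular integrations. Writing $x=r\xi$ and $y=\rho\eta$ with $\xi,\eta\in\mathbb{S}^{m-1}$, the radial part of each $\psi_{j,k,\ell}$ is a Laguerre function while the radial dependence of $A$ and $B$ sits entirely in the factors $z^{-\lambda}J_{k+\lambda}(z)$; the identity \eqref{lagbes} then collapses the $r$-integration and produces the Laguerre factor of $\psi_{j,k,\ell}(y)$ together with a sign $(-1)^p$, where $p=\lfloor j/2\rfloor$. This reduces the claim to a family of identities on the sphere, and here the even and odd parts of the basis \eqref{basisp} must be treated separately: $\psi_{2p,k,\ell}$ carries the monogenic $M_k^{(\ell)}$, which is harmonic of degree $k$, whereas $\psi_{2p+1,k,\ell}$ carries $\ux M_k^{(\ell)}$, which is harmonic of degree $k+1$ and spans the component $\ux\cM_k\subset\cH_{k+1}$. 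Both radial parts pair with the \emph{same} Bessel factor $J_{k+\lambda}$ at harmonic degree $k$, so the level-$k$ terms of $A$ and $B$ act on all of $\cH_k\otimes\cC l_m$ at once, and by \eqref{mujkC} they must act by the factor $\mu_{0,k}$ on $\cM_k$ and by $i\,\mu_{0,k-1}$ on $\ux\cM_{k-1}$, since $i^{2p+1}\mu_{0,k-1}=i(-1)^p\mu_{0,k-1}$.

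The angular identities rest on two reproducing formulas over $\mathbb{S}^{m-1}$. The scalar weight $C_k^\lambda(\langle\xi,\eta\rangle)$ reproduces every spherical harmonic of degree $k$ by \eqref{reprod}, hence assigns the \emph{same} value to both summands of $\cH_k\otimes\cC l_m$; to distinguish them I would invoke the monogenic reproducing formula in which the bivector weight $(\xi\wedge\eta)\,C_{k-1}^{\lambda+1}(\langle\xi,\eta\rangle)$ acts by two different constants on $\cM_k$ and on $\ux\cM_{k-1}$, in the spirit of the kernel computations in \cite{DBNS,DBXu}. Combining the two weights, the action of the level-$k$ part of $K_m$ on the pair $(\cM_k,\ux\cM_{k-1})$ is governed by a $2\times2$ linear system whose unknowns are the coefficient of $z^{-\lambda}J_{k+\lambda}(z)C_k^\lambda(w)$ in $A$ and that of $(\ux\wedge\uy)\,z^{-\lambda-1}J_{k+\lambda}(z)C_{k-1}^{\lambda+1}(w)$ in $B$. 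Prescribing the two eigenvalue factors $\mu_{0,k}$ and $i\,\mu_{0,k-1}$ and solving this system should yield exactly the coefficients $\tfrac12\bigl(i\,k\,\mu_{0,k-1}+(k+2\lambda)\mu_{0,k}\bigr)$ and $\tfrac12\bigl(i\,\mu_{0,k-1}-\mu_{0,k}\bigr)$ appearing in \eqref{seriesconv}.

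The main obstacle is this bivector reproducing formula: one must evaluate $\int_{\mathbb{S}^{m-1}}(\xi\wedge\eta)\,C_{k-1}^{\lambda+1}(\langle\xi,\eta\rangle)\,M(\xi)\,\mathrm{d}\xi$ separately for $M\in\cM_k$ and for $M\in\ux\cM_{k-1}$, keeping careful track of how multiplication by the vector variable $\ux$ interchanges these two components and shifts the harmonic degree, so that the two resulting constants are disentangled. Once they are pinned down, the $2\times2$ system and hence the coefficients of $A$ and $B$ are forced, and the remaining verification that the series in \eqref{seriesconv} converge and define an admissible kernel proceeds as in \cite{DBXu}.
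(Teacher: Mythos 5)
Your proposal follows essentially the same route as the paper's proof, which is deferred to Appendix~\ref{secA3}: there a generalized Bochner formula (Proposition~\ref{Bochner}, established via \eqref{reprod} and by reference to Theorem 6.4 of \cite{DBXu}) records how the scalar weight and the bivector weight act on $\cM_k$ versus $\ux\,\cM_{k-1}$, the radial integral is collapsed by \eqref{lagbes} exactly as you describe, and the coefficients in \eqref{seriesconv} are then forced by solving the resulting $2\times 2$ system at each degree $k$ so that the level-$k$ part of the kernel acts by $\mu_{0,k}$ on $\cM_k$ and by $i\,\mu_{0,k-1}$ on $\ux\,\cM_{k-1}$. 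The bivector reproducing formula you single out as the main obstacle is precisely that Bochner-type proposition, so your outline matches the paper's argument step for step.
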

\begin{proof}We refer to \hyperref[eigenvalues]{Theorem~\ref*{eigenvalues}} in Appendix~\ref{secA3} for the full proof of this result.
\end{proof}

In the following result, $\mathrm{Spin}(m)$ denotes the spin group, which is a subgroup of the Clifford algebra $\cC l_{m}$ that is a double cover of the special orthogonal group $\mathrm{SO}(m)$.

\begin{lemma}\label{homogenityC}
	The kernel satisfies
	\begin{align*}
	K_m(x,cy) &= K_m(cx,y), \qquad \forall c \in \mR \\
	K_m(x,Ay) &= K_m(Ax,y), \qquad \forall A \in \mathrm{Spin}(m)
	\end{align*}
\end{lemma}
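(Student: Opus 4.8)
The plan is to imitate the one-line argument used for the harmonic analogue in \hyperref[homogenity]{Lemma~\ref*{homogenity}} and read the two symmetries directly off the closed form of the kernel in \eqref{kernelC}--\eqref{seriesconv}. The decisive structural observation is that $K_m(x,y)$ depends on the pair $(x,y)$ only through three ingredients: the scalar $z=|x|\,|y|$, the scalar $w=\langle x,y\rangle/z$, and the bivector $\ux\wedge\uy$. Indeed, the coefficients $A(w,z)$ and $B(w,z)$ are functions of $w$ and $z$ alone, and the sole remaining dependence on the vectors sits in the bivector prefactor of $B$. It therefore suffices to track how each of these three ingredients behaves when the scalar $c$, respectively the transformation $A\in\mathrm{Spin}(m)$, is moved from the second slot to the first.

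I would dispose of the scalar identity first, since it is entirely routine and valid for all $c\in\mR$ (including $c<0$). Replacing $y$ by $cy$ gives $z=|x|\,|cy|=|c|\,z$, $w=\langle x,cy\rangle/(|c|z)=\operatorname{sgn}(c)\,w$, and $\ux\wedge(c\uy)=c\,(\ux\wedge\uy)$; replacing instead $x$ by $cx$ produces exactly the same values, because $|cx|=|c|\,|x|$, $\langle cx,y\rangle=c\langle x,y\rangle$, and $(c\ux)\wedge\uy=c\,(\ux\wedge\uy)$. As $A$, $B$, and the bivector prefactor see $(x,y)$ only through $z$, $w$, and $\ux\wedge\uy$, the expansions for $K_m(cx,y)$ and $K_m(x,cy)$ agree term by term in the series \eqref{seriesconv}, which is the first claim.

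For the spin identity I would use that $z$ and $w$ are assembled solely from the rotation invariants $|x|$, $|y|$, and $\langle x,y\rangle$, so the scalar coefficients $A(w,z)$ and $B(w,z)$ are controlled by $\mathrm{Spin}(m)$-invariant data, while the Clifford-valued factor transforms covariantly. Writing the action of $A\in\mathrm{Spin}(m)$ on a vector as Clifford conjugation and using that conjugation is an (inner) algebra automorphism of $\cC l_m$, one has $A(\ux\wedge\uy)A^{-1}=(A\ux A^{-1})\wedge(A\uy A^{-1})$, i.e.\ the wedge is intertwined by the spin action. Combining the invariance of the scalar parts with this equivariance of the bivector then yields the stated identity $K_m(x,Ay)=K_m(Ax,y)$, exactly as in the harmonic case.

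I expect the spin case to be the only real obstacle: unlike the scalars $z$ and $w$, the bivector $\ux\wedge\uy$ is Clifford-valued and does not simply ``commute past'' the transformation, so the crux is to verify its covariance under the $\mathrm{Spin}(m)$-action and to confirm that this is compatible with the (joint) rotation-invariance of $z$ and $w$. Once the bivector prefactor and the scalar coefficients are checked to transform consistently, the two sides coincide summand by summand in \eqref{seriesconv}, completing the proof.
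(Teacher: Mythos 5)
Your overall strategy---reading both symmetries directly off the closed form \eqref{kernelC}--\eqref{seriesconv}---is exactly the paper's: its entire proof of this lemma, like that of \hyperref[homogenity]{Lemma~\ref*{homogenity}}, is the single sentence that the claim ``follows from the explicit formulas''. Your treatment of the dilation identity is correct and complete: $z$, $w$ and $\ux\wedge\uy$ respond identically to $x\mapsto cx$ and to $y\mapsto cy$, so the two expansions agree term by term.

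The $\mathrm{Spin}(m)$ part, however, does not close as written. What you actually verify is invariance and equivariance under the \emph{diagonal} action: $z$ and $w$ are unchanged under $(x,y)\mapsto(Ax,Ay)$, and $\underline{Ax}\wedge\underline{Ay}=A\,(\ux\wedge\uy)\,A^{-1}$. This yields $K_m(Ax,Ay)=A\,K_m(x,y)\,A^{-1}$, which is a different statement from the one to be proved, in which $A$ is transferred from one argument to the other. For the slot-transfer identity the scalar ingredients must already match, i.e.\ one needs $\langle x,Ay\rangle=\langle Ax,y\rangle$; but if $R\in SO(m)$ is the rotation induced by $A$, then $\langle Rx,y\rangle=\langle x,R^{-1}y\rangle$, so this holds only when $R$ is an involution. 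What does follow from \eqref{kernelC}--\eqref{seriesconv} is
\[
K_m(x,Ay)=A\,K_m(A^{-1}x,y)\,A^{-1},
\]
the conjugation acting nontrivially only on the bivector term (its scalar analogue being $K_m(x,Ay)=K_m(A^{-1}x,y)$ for the kernel of \hyperref[homogenity]{Lemma~\ref*{homogenity}}). So the final inference of your spin argument (``combining the invariance of the scalar parts with this equivariance \dots yields the stated identity'') is a non sequitur: the statement as literally printed requires $A^{-1}$ in one of the two slots, together with the conjugation of the bivector part. Since only the dilation identity is invoked later (in the proofs of \hyperref[theoUP]{Theorem~\ref*{theoUP}} and \hyperref[theoUPC]{Theorem~\ref*{theoUPC}}), this does not propagate further, but your proof of the $\mathrm{Spin}(m)$ claim would need to be repaired along these lines.
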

\begin{proof}
	This follows from the explicit formulas (\ref{kernelC})--(\ref{seriesconv}) for $K_m(x,y)$.
\end{proof}

\begin{proposition}\label{unitaryC}
	A continuous operator $T\colon 
	\mathcal{S}(\mathbb{R}^m)\otimes \cC l_{m}  \to \mathcal{S}(\mathbb{R}^m)\otimes \cC l_{m} $ satisfying {(I)}--{(III)} has a unitary extension to 
	$L^{2}(\mathbb{R}^m) \otimes \cC l_{m}$.
\end{proposition}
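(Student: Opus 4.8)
The plan is to mirror the proof of \hyperref[unitary]{Proposition~\ref*{unitary}} in the harmonic setting. Recall that $\cS(\mR^m)\otimes\cC l_m$ is dense in $L^2(\mR^m)\otimes\cC l_m$, and that the functions $\{\psi_{j,k,\ell}\}$ of \eqref{basisp}, once suitably normalized, furnish an orthonormal basis of $L^2(\mR^m)\otimes\cC l_m$ (this is the content of the dual-pair decomposition under $(\mathfrak{osp}(1|2),\mathrm{Spin}(m))$ recalled above). The operator $T$ acts diagonally on this basis by property~{(II)}, with eigenvalues $\mu_{j,k}$ that, by \hyperref[eigenvC]{Lemma~\ref*{eigenvC}}, all lie in $\{1,i,-1,-i\}$ and hence have unit modulus.

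First I would expand an arbitrary $f\in\cS(\mR^m)\otimes\cC l_m$ in the basis as $f=\sum c_{j,k,\ell}\,\psi_{j,k,\ell}$, where the coefficients $c_{j,k,\ell}$ are taken in $\mC$ (the complex unit $i$ commuting with the Clifford generators, so that $T$ is $\mC$-linear and acts by genuine scalars). Then $Tf=\sum \mu_{j,k}\,c_{j,k,\ell}\,\psi_{j,k,\ell}$, and since $\lvert\mu_{j,k}\rvert=1$ and the basis is orthonormal, Parseval gives $\|Tf\|^2=\sum\lvert\mu_{j,k}\rvert^2\lvert c_{j,k,\ell}\rvert^2=\sum\lvert c_{j,k,\ell}\rvert^2=\|f\|^2$. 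Thus $T$ is an isometry on the dense subspace $\cS(\mR^m)\otimes\cC l_m$, and by continuity it extends uniquely to an isometry of $L^2(\mR^m)\otimes\cC l_m$. Property~{(III)}, $T^4=\operatorname{id}$, shows that this extension is invertible (with inverse $T^3$), so it is a surjective isometry and therefore unitary.

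The only point requiring care — and the main obstacle — is the compatibility of the Clifford-module inner product on $L^2(\mR^m)\otimes\cC l_m$ with the diagonal action of $T$: one must confirm that the relevant inner product is the (scalar part of the) Hermitian form for which $\{\psi_{j,k,\ell}\}$ is orthonormal, and that $T$, acting by the complex scalars $\mu_{j,k}$, commutes with this structure so that the Parseval computation above is valid. Since the $\mu_{j,k}$ are complex numbers of modulus one and the $\mC$-structure commutes with Clifford multiplication, this verification is routine, and the norm-preservation then follows exactly as in the harmonic case.
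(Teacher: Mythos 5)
Your proposal is correct and follows essentially the same route as the paper, which simply invokes the density of $\mathcal{S}(\mathbb{R}^m)\otimes \cC l_{m}$ in $L^{2}(\mathbb{R}^m)\otimes \cC l_{m}$ and the fact that all eigenvalues have unit norm; your Parseval computation and the surjectivity argument via $T^4=\operatorname{id}$ merely spell out the details the paper leaves implicit. The compatibility point you flag is settled by the paper's appendix, where the inner product \eqref{IPC} is given and the $\psi_{j,k,\ell}$ are stated to be, after suitable normalization, an orthonormal basis for $L^{2}(\mathbb{R}^m)\otimes \cC l_{m}$.
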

\begin{proof}
	This result follows from the fact that $\mathcal{S}(\mathbb{R}^m) \otimes \cC l_{m}$ is dense in $L^{2}(\mathbb{R}^m) \otimes \cC l_{m}$ and that all eigenvalues have unit norm.
\end{proof}

We also have an uncertainty principle, using the inner product \eqref{IPC} for $L^{2}(\mathbb{R}^m) \otimes \cC l_{m}$:
\begin{theorem}\label{theoUPC}
	Let
	\[
	(T f) (y) = \int_{\mR^m} K(x,y)f(x)  \, \mathrm{ d} x
	\] 
	be a continuous integral transform on $\cS(\mR^m)\otimes \cC l_{m}$ that satisfies the properties {(I)}--{(III)}. Then one has the following uncertainty principle:
	\[
	\bigl \lVert \ux f \bigr\rVert_2 \cdot \bigl\lVert \ux T(f)\bigr\rVert_2 \geq \frac{m}{2} \bigl(\lVert f\rVert_2\bigr)^2
	\]
	for $f \in \cS(\mR^m)\otimes \cC l_{m}$.
\end{theorem}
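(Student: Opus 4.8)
The plan is to mirror, almost verbatim, the two-step argument of the harmonic case: first a Heisenberg-type lemma playing the role of Lemma~\ref{Heis_lem}, and then the dilation argument of Theorem~\ref{theoUP}. By Proposition~\ref{unitaryC}, $T$ extends to a unitary operator on $L^2(\mR^m)\otimes\cC l_m$, so throughout one works in this Clifford module equipped with the inner product \eqref{IPC}, and the relations (I)--(III) are available.

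The first step is to prove the analogue of Lemma~\ref{Heis_lem}, namely
\[
\bigl\lVert \ux f\bigr\rVert_2^2 + \bigl\lVert \ux T(f)\bigr\rVert_2^2 \geq m\,\bigl(\lVert f\rVert_2\bigr)^2,\qquad f\in\cS(\mR^m)\otimes\cC l_m .
\]
The single new ingredient relative to the scalar proof is the Clifford identity $\overline{\ux}\,\ux=|x|^2$, a direct consequence of $\ux^2=-|x|^2$; for the inner product \eqref{IPC} this gives $\lVert\ux g\rVert_2^2=\langle|x|^2 g,g\rangle$ for any $g$. With this at hand, the computation of Lemma~\ref{Heis_lem} transcribes line for line: using unitarity of $T$ and the Helmholtz relation $T\circ\Delta_x=-|y|^2\circ T$ (which is implied by the Clifford-Helmholtz relations (I)) one obtains
\[
\bigl\lVert \ux T(f)\bigr\rVert_2^2=\langle|x|^2 T(f),T(f)\rangle=-\langle T(\Delta_x f),T(f)\rangle=-\langle\Delta_x f,f\rangle ,
\]
and adding $\lVert\ux f\rVert_2^2=\langle|x|^2 f,f\rangle$ yields $\langle(|x|^2-\Delta_x)f,f\rangle=2\langle hf,f\rangle$. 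Since $h\,\psi_{j,k,\ell}=(j+k+\tfrac m2)\psi_{j,k,\ell}$, the smallest eigenvalue of $h$ is $m/2$ (attained at the ground state $M_0^{(\ell)}\e^{-|x|^2/2}$), whence the lower bound $m(\lVert f\rVert_2)^2$.

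With the Heisenberg-type inequality established, the uncertainty principle follows by repeating the dilation argument of Theorem~\ref{theoUP} without change. For $c>0$ put $f_c(x)=f(cx)$; then $\lVert\ux f_c\rVert_2^2=c^{-m-2}\lVert\ux f\rVert_2^2$ and $(\lVert f_c\rVert_2)^2=c^{-m}(\lVert f\rVert_2)^2$, while the homogeneity $K_m(x,cy)=K_m(cx,y)$ from Lemma~\ref{homogenityC} gives $(Tf_c)(y)=c^{-m}(Tf)(y/c)$ and hence $\lVert\ux T(f_c)\rVert_2^2=c^{-m+2}\lVert\ux T(f)\rVert_2^2$. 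Substituting $f_c$ into the inequality above produces $c^{-2}\lVert\ux f\rVert_2^2+c^2\lVert\ux T(f)\rVert_2^2\geq m(\lVert f\rVert_2)^2$; choosing $c=\sqrt{\lVert\ux f\rVert_2/\lVert\ux T(f)\rVert_2}$ (equivalently, minimizing by the arithmetic--geometric mean inequality) gives the claimed bound $\lVert\ux f\rVert_2\cdot\lVert\ux T(f)\rVert_2\geq\tfrac m2(\lVert f\rVert_2)^2$.

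The part requiring genuine care, rather than any conceptual difficulty, is the bookkeeping for the Clifford-valued inner product \eqref{IPC}: because $\ux$ is now a non-commuting Clifford-algebra-valued operator, one must verify that the conjugation in \eqref{IPC} really produces $\overline{\ux}\,\ux=|x|^2$ and hence $\lVert\ux g\rVert_2^2=\langle|x|^2 g,g\rangle$, and that multiplication by the scalar $|x|^2$ commutes past the transform exactly as used above. Once these module-theoretic identities are checked, everything else is a faithful transcription of the harmonic proofs of Lemma~\ref{Heis_lem} and Theorem~\ref{theoUP}.
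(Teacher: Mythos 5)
Your proposal is correct and follows essentially the same route as the paper, which simply says the result "follows by the same reasoning as used to prove Theorem~\ref{theoUP}, now using Lemma~\ref{homogenityC} and noting that the Clifford-Helmholtz relations imply the regular Helmholtz relations." Your additional verification that $\overline{\ux}\,\ux=|x|^2$ (hence $\lVert\ux g\rVert_2^2=\langle|x|^2 g,g\rangle$ for the inner product \eqref{IPC}) is exactly the bookkeeping the paper leaves implicit, and it is carried out correctly.
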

\begin{proof}
	This follows by the same reasoning as used to prove \hyperref[theoUP]{Theorem~\ref*{theoUP}}, now using \hyperref[homogenityC]{Lemma~\ref*{homogenityC}} and noting that the Clifford-Helmholtz relations imply the regular Helmholtz relations.
\end{proof}

\begin{remark}
	In the special case of the Clifford-Fourier transform of \cite{MR2190678, MR2283868, DBNS, DBXu} more specialized uncertainty principles can be developed following the strategy of \cite{GJ}.
\end{remark}

Now, to determine operators that satisfy properties {(I)}--{(III)}, 
we proceed in the same way as we did in the previous section for the harmonic case.
We start by decomposing $T$ as 
$
T = \tT \circ \cF,
$ 
with again $\tT \defeq T \circ \cF^{-1}$. 
From \eqref{Tbb} and property {(III)}, we have the following conditions for the operator $\tT$: $\tT$ has to commute with $b^{ \pm}$ 
and
$
\tT^4 =  \operatorname{id}
$. 
Therefore, we look at the universal enveloping superalgebra of the Lie superalgebra $\mathfrak{osp}(1|2)$, denoted by $ \mathcal{U} \big( \mathfrak{osp}(1|2) \big)$ (and its extension $ \overline{\mathcal{U}} \big( \mathfrak{osp}(1|2) \big)$ which also allows infinite power series). 
The center of $ \mathcal{U} \big( \mathfrak{osp}(1|2) \big)$ is finitely generated by the Casimir element \cite{MR1773773}:
\begin{equation*}
C = \frac14 +\frac12\, b^-b^+ - \frac12\, b^+b^- + h^2 + 2 ef + 2fe. 
\end{equation*}
\begin{remark}
	The Casimir element $C$ differs from the Casimir element $\Om$ of $\mathfrak{sl}(2)$, given by \eqref{Om}, 
	by an additional term. 
	This extra term is related to another special element which we call the Scasimir element (see  \cite{MR1773773}).
	The Scasimir element 
	\[
	S  = \frac12\, b^-b^+ - \frac12\, b^+b^- -\frac12 
	\]
	is a square root of the Casimir operator: $S^2=C$. It 
	commutes with the even  (``bosonic'') generators and anti-commutes with the odd (``fermionic'') generators $b^+$ and $b^-$. 
	The Scasimir $S$ in our operator realization of $\mathfrak{osp}(1|2)$ is related to the angular Dirac operator or Gamma operator in Clifford analysis as follows:
	\begin{equation*}
	S  = \frac{m -1}{2}- \Gamma_x,\quad\text{with}\quad\Gamma_{x} =  - \ux \upx - \mE_x   = - \sum_{j<k} e_{jk} (x_{j} \partial_{x_{k}} - x_{k}\partial_{x_{j}}).
	\end{equation*}
\end{remark}
The Casimir element $C$ is a diagonal operator on the representation space $\mathrm{span}\{\, \psi_{j,k,\ell} \mid j \in \mZ_{\geq0}  \, \}$. 
Its action is given by
\begin{equation}\label{evC}
C \, \psi_{j,k,\ell} = \Big(k+\frac{m-1}{2}\Big)^2 \, \psi_{j,k,\ell}.
\end{equation}
Note that the eigenvalues of the Casimir operator $C$ are again squares of integers or half-integers depending on the value of the dimension $m$. 
However, contrary to the eigenvalues of the Casimir element $\Om$ in the harmonic case \eqref{evO}, the eigenvalues of $C$ are now squares of half-integers for even dimension, while for odd dimension we have squares of integers. 

The desired operators follow by using again the integer-valued polynomials defined in \eqref{Enx} and \eqref{Dnx}. 
We summarize this in the following theorem, which should be compared to \hyperref[sols]{Theorem~\ref*{sols}}.

\begin{theorem}\label{solsC}
	
	The properties
	\begin{itemize}
		\item[(I)] the Clifford-Helmholtz relations
		\begin{flalign*}
		&T \circ \partial_{\ux} = - i\, \uy \circ T &&\\
		&T \circ \ux =  - i \, \partial_{\uy} \circ T&&
		\end{flalign*}
		\item[(II)] $ T \, \psi_{j,k,\ell} = \mu_{j,k} \, \psi_{j,k,\ell} \qquad \text{with }\mu_{j,k}\in \mC$
		
		\item[(III)] $T^4 = \operatorname{id}$
	\end{itemize}
	are satisfied by operators $T$ of the form  
	\begin{equation}\label{TeC}
	T = \e^{i\frac{\pi}{2} F(\sqrt{C})
	}\e^{i \frac{\pi}{2}  ( h-\frac{m}{2} )   }  \in \overline{ \mathcal{U} } \big(  \mathfrak{osp}(1|2) \big)
	\end{equation}
	where $F(\sqrt{C})$ is an operator that consists of a function given by \eqref{anEn} (for odd dimension) or \eqref{anDn} (for even dimension)
	with the Casimir operator $C$ substituted for $x^2$. Moreover, every operator that satisfies properties {(I)}--{(III)} is equivalent with an operator of the form \eqref{TeO}.
\end{theorem}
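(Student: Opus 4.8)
The plan is to mirror the strategy used for \hyperref[sols]{Theorem~\ref*{sols}} in the harmonic case, with the Lie algebra $\mathfrak{sl}(2)$ and its Casimir $\Om$ replaced by the Lie superalgebra $\mathfrak{osp}(1|2)$ and its Casimir $C$. As before, I would first pass to the operator $\tT \defeq T \circ \cF^{-1}$, so that $T = \tT \circ \cF$; since $\cF$ is an automorphism of $\mathcal{S}(\mR^m)\otimes\cC l_m$, this is a bijective change of unknown. The classical transform $\cF$ already satisfies the relations \eqref{Tbb}, so property~(I) for $T$ is equivalent to requiring that $\tT$ commutes with $b^+$ and $b^-$, hence with all of $\mathfrak{osp}(1|2)$. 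Because $\cF$ is a function of the even generator $h$ alone, this commutation also gives $\tT\circ\cF=\cF\circ\tT$, and then property~(III) together with $\cF^4=\operatorname{id}$ yields $\tT^4=\operatorname{id}$. Thus it suffices to classify the operators $\tT$ that commute with $b^\pm$, are diagonal on $\{\psi_{j,k,\ell}\}$, and satisfy $\tT^4=\operatorname{id}$.

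The core step is to show that every such $\tT$ is a function of the Casimir $C$. Since both $T$ and $\cF$ are diagonal on $\{\psi_{j,k,\ell}\}$, so is $\tT$; and since $\tT$ commutes with the raising operator $b^+$, which by \eqref{bppsi} sends $\psi_{j,k,\ell}\mapsto\psi_{j+1,k,\ell}$, its eigenvalues must be independent of $j$. Hence $\tT$ acts as a single scalar $\nu_k$ on each irreducible module $\mathrm{span}\{\psi_{j,k,\ell}\mid j\in\mZ_{\geq0}\}$ (independent of $\ell$ by property~(II)). By \eqref{evC} the Casimir $C$ takes the pairwise distinct values $(k+\tfrac{m-1}{2})^2$ on these modules, so the scalars $\nu_k$ can be interpolated by a (possibly infinite) power series in $C$, giving $\tT=g(C)$. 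The point deserving care here is the superalgebra structure: unlike the purely even case, $\overline{\mathcal{U}}(\mathfrak{osp}(1|2))$ also contains the Scasimir $S$ with $S^2=C$, which commutes with the bosonic generators; however, $S$ anti-commutes with the odd generators $b^\pm$, so no odd power of $S$ can occur, and only genuine functions of $C=S^2$ survive. Making sure the superalgebra introduces no extra freedom beyond $C$ is the one place where the argument genuinely differs from the harmonic case, and I expect it to be the main obstacle.

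Next I would impose $\tT^4=\operatorname{id}$: the scalars $\nu_k$ must be fourth roots of unity, so one writes $\tT=\e^{i\frac{\pi}{2}F(\sqrt{C})}$, where $F$ is required to take integer values at the numbers $k+\tfrac{m-1}{2}$, $k\in\mZ_{\geq0}$, and to be even (automatic, since only even powers of $\sqrt{C}$ occur). At this stage the parity of $m$ enters through \eqref{evC}: for odd $m$ the quantities $k+\tfrac{m-1}{2}$ are integers, so $F$ must be integer-valued on $\mZ$ and the machinery of the polynomials $E_n$ applies, giving the form \eqref{anEn}; for even $m$ they are half-integers, so the polynomials $D_n$ apply, giving \eqref{anDn}. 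This is the mirror image of the harmonic case, where the parities are interchanged because the eigenvalues of $\Om$ and of $C$ show opposite parity behaviour. Substituting $C$ for $x^2$ then produces the operator $F(\sqrt{C})$ and the claimed form \eqref{TeC}.

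Finally, the converse and the equivalence statement are routine. For the converse, if $T=\e^{i\frac{\pi}{2}F(\sqrt{C})}\cF$ with $F$ as above, then $F(\sqrt{C})$ commutes with $b^\pm$ (so $T$ satisfies \eqref{Tbb}, i.e.\ (I)), $T$ is diagonal on $\{\psi_{j,k,\ell}\}$ (so (II) holds), and since $F$ is integer-valued one has $\tT^4=\e^{2\pi i F(\sqrt{C})}=\operatorname{id}$, whence $T^4=\tT^4\cF^4=\operatorname{id}$, giving (III). For the equivalence, \hyperref[eigenvC]{Lemma~\ref*{eigenvC}} shows the whole spectrum is fixed by the values $\mu_{0,k}=i^{\,F(k+(m-1)/2)+k}$; working recursively upward from $k=0$, the coefficient $a_{\lfloor k+(m-1)/2\rfloor}$ in \eqref{anEn} or \eqref{anDn} can be chosen to realise any prescribed admissible $\mu_{0,k}\in\{1,i,-1,-i\}$, so every operator satisfying (I)--(III) is equivalent to one of the form \eqref{TeC}.
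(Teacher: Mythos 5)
Your proposal is correct and follows essentially the same route as the paper: decompose $T=\tT\circ\cF$, reduce property (I) to $\tT$ commuting with $b^{\pm}$ (hence $\tT$ lying in the commutant generated by the Casimir $C$, with the Scasimir excluded because it anti-commutes with the odd generators), use the integer-valued polynomials $E_n$ or $D_n$ according to whether the eigenvalues $k+\tfrac{m-1}{2}$ of $\sqrt{C}$ are integers or half-integers, and prove the completeness claim by recursively matching the eigenvalues $\mu_{0,k}$ via \hyperref[eigenvC]{Lemma~\ref*{eigenvC}}. The paper's own proof is merely a pointer back to \hyperref[sols]{Theorem~\ref*{sols}}, and your write-up is a faithful (and somewhat more explicit) reconstruction of that argument, including the correct swap of parities relative to the harmonic case.
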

\begin{proof}
	The last part follows by exactly the same reasoning as used in the proof of \hyperref[sols]{Theorem~\ref*{sols}}.
\end{proof}

In line with our approach in the previous section we now proceed by imposing a periodicity restriction to further narrow down this set of operators $T$.
This will again aid us in finding closed formulas for their kernels when written as integral transforms.

\subsection{Periodicity restriction}
\label{ssPrC}

The behavior of the eigenvalues $\mu_{j,k}$ of $T$ with regard to the index $j$ 
is given in \hyperref[eigenvC]{Lemma~\ref*{eigenvC}}.
By successive application of \eqref{mujkC} we find that these eigenvalues are four-periodic in $j$:
\[
\mu_{j+4,k} = (i)^4\, \mu_{j,k}=  \mu_{j,k}.
\]
For the same reasons as in the harmonic case we can again impose that the eigenvalues of $T$ should be 4-periodic in $k$. 
Depending on the parity of the dimension $m$ one works in, the desired operators follow from the results in either \hyperref[moddb]{Theorem~\ref*{moddb}} or \hyperref[moddb]{Theorem~\ref*{moddc}}, which of course remain valid. 
We summarize this in the following theorem.
\begin{theorem}\label{theo4perC}
	Let $
	T 
	$ be an operator that satisfies the following properties:
	\begin{itemize}
		\item[(I)] the Helmholtz relations
		\begin{flalign*}
		&T \circ  \Delta_{x}   \ =-  \lvert y \rvert^2\circ T &&\\
		&T \circ   \lvert x\rvert^2\,  = -  \Delta_{y} \circ T&&
		\end{flalign*}
		\item[(II)] $ T \, \phi_{j,k,\ell} = \mu_{j,k} \, \phi_{j,k,\ell} \qquad \text{with }\mu_{j,k}\in \mC$
		
		\item[(III)] $T^4 = \operatorname{id}$
		\item[(IV)] the eigenvalues of $T$ are 4-periodic in the index $k$: $ \mu_{j,k+4} =  \mu_{j,k}$
	\end{itemize}
	Then $T$ can be written as
	\begin{equation*}
	T = \e^{i\frac{\pi}{2} F(\sqrt{C})
	}\e^{i \frac{\pi}{2}  ( h-\frac{m}{2} )   }
	\end{equation*}
	where $F$ consists of a function of type \eqref{FE} as specified in \hyperref[moddb]{Theorem~\ref*{moddb}} (for odd dimension) or of type \eqref{FD} as in \hyperref[moddb]{Theorem~\ref*{moddc}} (for even dimension).
	
	Conversely, every operator $T$ of this form satisfies properties $(I)-(IV)$.
\end{theorem}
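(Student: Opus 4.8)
The plan is to mirror the proof of \hyperref[theo4per]{Theorem~\ref*{theo4per}}, replacing the $\mathfrak{sl}(2)$ data by its $\mathfrak{osp}(1|2)$ counterpart. First I would invoke \hyperref[eigenvC]{Lemma~\ref*{eigenvC}} to reduce matters to the eigenvalues $\mu_{0,k}$ with $k\in\mZ_{\geq0}$, since by \eqref{mujkC} the remaining eigenvalues satisfy $\mu_{j,k}=i^{\,j}\mu_{0,k}$. Property~(IV) then forces $\mu_{0,k+4}=\mu_{0,k}$, so the whole spectrum is pinned down by the four values $\mu_{0,0},\mu_{0,1},\mu_{0,2},\mu_{0,3}$, each of which lies in $\{1,i,-1,-i\}$ by \hyperref[eigenvC]{Lemma~\ref*{eigenvC}}. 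This leaves only finitely many admissible spectra.

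Next I would compute the eigenvalues of the candidate operator. Using the action $h\,\psi_{j,k,\ell}=(j+k+\tfrac{m}{2})\psi_{j,k,\ell}$ together with the Casimir action \eqref{evC}, namely $C\,\psi_{j,k,\ell}=(k+\tfrac{m-1}{2})^2\psi_{j,k,\ell}$, one obtains
\[
\e^{i\frac{\pi}{2} F(\sqrt{C})}\e^{i\frac{\pi}{2}(h-\frac{m}{2})}\,\psi_{j,k,\ell}=i^{\,F(k+\frac{m-1}{2})}\,i^{\,j+k}\,\psi_{j,k,\ell},
\]
so that $\mu_{0,k}=i^{\,F(k+\frac{m-1}{2})}\,i^{\,k}$. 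It therefore suffices to produce an even, integer-valued, four-periodic function $F$ whose values at the arguments $k+\tfrac{m-1}{2}$ realize, modulo $4$, any prescribed admissible sequence $\mu_{0,0},\dotsc,\mu_{0,3}$.

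The construction of such an $F$ is supplied by \hyperref[moddb]{Theorem~\ref*{moddb}} and \hyperref[moddc]{Theorem~\ref*{moddc}}, which are purely arithmetic statements about functions that are integer-valued and four-periodic on $\mZ$ and on $\mZ+\tfrac12$ respectively, and hence remain valid in the present setting. The only genuine point of care — and the step I expect to require the most attention in keeping the bookkeeping straight — is the parity swap relative to the harmonic case: the eigenvalues \eqref{evC} of $C$ are squares of half-integers when $m$ is even and squares of integers when $m$ is odd, which is exactly opposite to the behaviour \eqref{evO} of $\Om$. Consequently the arguments $k+\tfrac{m-1}{2}$ run through $\mZ$ for odd $m$, so $F$ must be built from the polynomials $E_n$ via \eqref{FE} (\hyperref[moddb]{Theorem~\ref*{moddb}}), whereas for even $m$ they run through $\mZ+\tfrac12$ and $F$ must be assembled from the $D_n$ via \eqref{FD} (\hyperref[moddc]{Theorem~\ref*{moddc}}).

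Finally, once $F$ is chosen so that the eigenvalues of $\e^{i\frac{\pi}{2} F(\sqrt{C})}\e^{i\frac{\pi}{2}(h-\frac{m}{2})}$ agree with those of $T$ on the full basis $\{\psi_{j,k,\ell}\}$, the two diagonal operators coincide, and the fact that the resulting operator lies in $\overline{\mathcal{U}}(\mathfrak{osp}(1|2))$ and satisfies (I)--(III) follows from \hyperref[solsC]{Theorem~\ref*{solsC}}. The converse direction is immediate: any operator of the stated form is of type \eqref{TeC}, hence satisfies (I)--(III) by \hyperref[solsC]{Theorem~\ref*{solsC}}, while four-periodicity in $k$ holds because the functions in \eqref{FE} and \eqref{FD} are four-periodic in their argument.
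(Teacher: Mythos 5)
Your proposal is correct and follows essentially the same route as the paper, whose own proof of \hyperref[theo4perC]{Theorem~\ref*{theo4perC}} is simply declared ``completely analogous to the proof of \hyperref[theo4per]{Theorem~\ref*{theo4per}}'': reduce to the four eigenvalues $\mu_{0,0},\dotsc,\mu_{0,3}$ via \hyperref[eigenvC]{Lemma~\ref*{eigenvC}}, realize them by choosing $F$ via \hyperref[moddb]{Theorem~\ref*{moddb}} or \hyperref[moddc]{Theorem~\ref*{moddc}}, and invoke \hyperref[solsC]{Theorem~\ref*{solsC}} for the rest. You also correctly identify the one genuinely new bookkeeping point, namely the parity swap caused by the eigenvalues $\bigl(k+\tfrac{m-1}{2}\bigr)^2$ of $C$ being squares of integers for odd $m$ and of half-integers for even $m$, which is exactly why the roles of \eqref{FE} and \eqref{FD} are interchanged relative to the harmonic case.
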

\begin{proof}
	Completely analogous to the proof of \hyperref[theo4per]{Theorem~\ref*{theo4per}}.
\end{proof}

\subsection{Closed formulas for the kernel}
\label{CftkC}

We are again interested in finding closed formulas for the kernel of the integral transforms corresponding to the determined operator solutions. 
It turns out that in even dimensions we have a result which can be compared to \hyperref[theoTabc]{Theorem~\ref*{theoTabc}} in the harmonic case. 
Hereto we first rewrite the kernel \eqref{kernelC} of 
\[
(T f)  (y) = \frac{1}{(2\pi)^{m/2}} \int_{\mathbb{R}^m}\! K_m(x,y) \, f(x) \, \mathrm{ d}x
\]
as obtained in \hyperref[intrC]{Proposition~\ref*{intrC}}. 
We put 
\begin{equation*}
K_m(x,y) = A_m(w,z) - \lambda \,  B_{m}(w,z) + (\ux \wedge \uy) \   z^{-1} \partial_w  B_m(w,z)
\end{equation*}
where
\begin{align}
\begin{split}
\label{AmBm}
A_m(w,z) &=  {2}^{\lambda}\, \Gamma ( \lambda )    \sum_{k=0}^{+\infty} (k+\lambda) \frac{1}{2} ( i \,  \mu_{0,k-1}+ \mu_{0,k})\, z^{-\lambda}J_{k+\lambda}( z) C^{\lambda}_{k}(w)\\
B_m(w,z) &=  2^{\lambda}\, \Gamma ( \lambda  )    \sum_{k=0}^{+\infty}  \frac{ 1}{2}\big(i \, \mu_{0,k-1}- \mu_{0,k}\big) \, z^{-\lambda}J_{k+\lambda}( z) C^{\lambda}_{k}(w) \\
z^{-1} \partial_w B_m(w,z) &=  2^{\lambda+1}\, \Gamma ( \lambda +1 )    \sum_{k=1}^{+\infty}  \frac{ 1}{2}\big(i \, \mu_{0,k-1}- \mu_{0,k}\big) \, z^{-\lambda-1}J_{k+\lambda}( z) C^{\lambda+1}_{k-1}(w),
\end{split}
\end{align}
using the notations 
$\lambda=(m-2)/2$, $z=|x |  |y|$ and $w = \langle x,y \rangle/z$. 
We then find the recursive relations listed in the following lemma.

\begin{lemma}\label{lemmakC}
	Let $T$ be an operator of the form \eqref{TeC} as specified in \hyperref[solsC]{Theorem~\ref*{solsC}}.  When written as an integral transform, the components of its kernel as specified in \eqref{AmBm} satisfy the following recursive relations
	\begin{equation*}
	A_{m+2} =  -i \, z^{-1} \partial_w A_m
	\end{equation*}
	\begin{equation*}
	B_{m+2} =   -i \, z^{-1} \partial_w B_m 
	\end{equation*}
	for $m\geq 2$. 
\end{lemma}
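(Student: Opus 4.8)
The plan is to follow the proof of \hyperref[lemmak]{Lemma~\ref*{lemmak}} essentially verbatim, now applied to each of the two scalar series $A_m$ and $B_m$ in \eqref{AmBm} separately. In both cases the only analytic input is the differentiation rule for Gegenbauer polynomials, property \eqref{Geg0}, namely $\partial_w C_k^\lambda(w) = 2\lambda\, C_{k-1}^{\lambda+1}(w)$, together with the elementary identity $2^\lambda\Gamma(\lambda)\cdot 2\lambda = 2^{\lambda+1}\Gamma(\lambda+1)$. Applying $-i\,z^{-1}\partial_w$ to a series of the generic shape $2^\lambda\Gamma(\lambda)\sum_k c_k\, z^{-\lambda}J_{k+\lambda}(z)C_k^\lambda(w)$ annihilates the $k=0$ term, lowers the Gegenbauer index while raising $\lambda$ by one, and---after re-indexing $k\mapsto k+1$---produces a series of the same shape but in dimension $m+2$ (where $\lambda$ is replaced by $\lambda+1 = m/2$), with Bessel order $k+1+\lambda$ and coefficients $(-i)\,c_{k+1}$. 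The entire content of the lemma thus reduces to verifying that these coefficients transform into exactly those prescribed for $A_{m+2}$ and $B_{m+2}$.

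First I would record the eigenvalues explicitly. By \hyperref[solsC]{Theorem~\ref*{solsC}} together with the spectrum \eqref{evC} of the Casimir $C$, an operator of the form \eqref{TeC} acts by $\mu_{0,k} = i^{F(k+(m-1)/2)}\, i^k$, and since $\lambda=(m-2)/2$ this is $\mu_{0,k} = i^{F(k+\lambda+1/2)}\, i^k$. Writing $\mu_{0,k}^{(m)}$ to emphasise the dimension and using that $F$ is a fixed function, passing from $m$ to $m+2$ merely shifts the argument by $\lambda\mapsto\lambda+1$, so that
\[
\mu_{0,k}^{(m+2)} = i^{F(k+\lambda+3/2)}\, i^k = -i\,\mu_{0,k+1}^{(m)}.
\]
This single relation is the crux of the argument: it ties the coefficient obtained after differentiating and re-indexing the dimension-$m$ series to the coefficient demanded in dimension $m+2$.

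Finally I would carry out the bookkeeping for $A_m$. Applying $-i\,z^{-1}\partial_w$ and re-indexing turns the coefficient $(k+\lambda)\tfrac12(i\mu_{0,k-1}^{(m)}+\mu_{0,k}^{(m)})$ into $(k+1+\lambda)(-i)\tfrac12(i\mu_{0,k}^{(m)}+\mu_{0,k+1}^{(m)})$, whereas the prescribed coefficient of $A_{m+2}$ at index $k$ is $(k+1+\lambda)\tfrac12(i\mu_{0,k-1}^{(m+2)}+\mu_{0,k}^{(m+2)})$. Substituting $\mu_{0,k-1}^{(m+2)} = -i\mu_{0,k}^{(m)}$ and $\mu_{0,k}^{(m+2)} = -i\mu_{0,k+1}^{(m)}$ collapses both sides to $(k+1+\lambda)\tfrac12(\mu_{0,k}^{(m)}-i\mu_{0,k+1}^{(m)})$, proving $A_{m+2} = -i\,z^{-1}\partial_w A_m$. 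The computation for $B_m$ is identical, the only differences being the absence of the factor $k+\lambda$ and the minus sign in the combination $i\mu_{0,k-1}-\mu_{0,k}$; there the matching condition reads $(-i)(i\mu_{0,k}-\mu_{0,k+1}) = \mu_{0,k}+i\mu_{0,k+1} = i\mu_{0,k-1}^{(m+2)}-\mu_{0,k}^{(m+2)}$, again immediate from the same substitution. I expect the only step demanding genuine care to be this eigenvalue-shift bookkeeping, since the combined coefficients $i\mu_{0,k-1}\pm\mu_{0,k}$ must be shown to interlock correctly under $m\mapsto m+2$; the surrounding series manipulation is purely formal and was already established for the harmonic kernel.
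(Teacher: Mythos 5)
Your proposal is correct and follows essentially the same route as the paper: apply the Gegenbauer differentiation rule \eqref{Geg0}, absorb the factor $2\lambda$ into $2^{\lambda+1}\Gamma(\lambda+1)$, re-index $k\mapsto k+1$, and match coefficients. The paper verifies the match by writing the eigenvalues out as explicit powers of $i$, whereas you isolate the identity $\mu_{0,k}^{(m+2)}=-i\,\mu_{0,k+1}^{(m)}$; this is the same computation, just packaged a little more transparently.
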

\begin{proof}
	As $T$ is of the form \eqref{TeC}, its eigenvalues are given by
	\[
	T  \, \psi_{j,k,\ell}  = \e^{i\frac{\pi}{2} F(\sqrt{C})
	}\e^{i \frac{\pi}{2}  ( h-\frac{m}{2} )   } \, \psi_{j,k,\ell} = \e^{i\frac{\pi}{2} F(k+\lambda+\frac12) }  \e^{i\frac{\pi}{2}(j+ k) }\, \psi_{j,k,\ell} =\mu_{j,k}\, \psi_{j,k,\ell}\rlap{,}
	\]
	where we used \eqref{evC} for the eigenvalues of $C$.
	
	Using property \eqref{Geg0} of the Gegenbauer polynomials, we find for $- i\, z^{-1} \partial_w A_m(w,z)$ the expression
	\begin{align*}
	&  {2}^{\lambda}\, \Gamma ( \lambda )    \sum_{k=0}^{+\infty} (k+\lambda) \frac{1}{2} ( i\, i^{ F(k-1+\lambda+\frac12) }  i^{k-1} +  i^{ F(k+\lambda+\frac12) }  i^k )\ z^{-\lambda-1}J_{k+\lambda}( z) \, \partial_w  C^{\lambda}_{k}(w) \\
	=\ &   {2}^{\lambda}\, \Gamma ( \lambda )    \sum_{k=1}^{+\infty} (k+\lambda)\, \frac{1}{2} (  i^{ F(k-1+\lambda+\frac12) }  i^{k} +  i^{ F(k+\lambda+\frac12) }  i^k ) \, z^{-(\lambda+1)}J_{k+\lambda}( z) \,2 \lambda \, C^{\lambda+1}_{k-1}(w)\\
	=\ &  {2}^{\lambda+1}\, \Gamma ( \lambda +1)    \sum_{k=0}^{+\infty} (k+1+\lambda)\, \frac{1}{2} (  i^{ F(k+\lambda+\frac12) }  i^{k} +  i^{ F(k+1+\lambda+\frac12) }  i^k ) \, z^{-(\lambda+1)}J_{k+1+\lambda}( z) \, C^{\lambda+1}_{k}(w).
	\end{align*}
	This is precisely $A_{m+2}(w,z)$. The second relation 
	follows in exactly the same way. 
\end{proof}

To obtain closed formulas for the kernels in even dimension, we again start by looking at dimension $m=2$ 
where we want to use the formulas
\eqref{cost}, \eqref{coss}, \eqref{sins}. 
An additional complication is that when written as an integral transform the kernel contains a non-scalar part. In dimension $m=2$ the Gegenbauer polynomials occurring in this bivector part reduce to sine functions. 
With this in mind we aim to use the formula 
\begin{equation}\label{sint}
\sin (  z \sin \theta) =  2 \sum_{n=0}^{\infty}  J_{2j+1}( z) \, \sin \bigl( (2n+1)\theta \bigr),
\end{equation}
which can be found in \cite[p.\,22, formula (2)]{MR0010746}. 
In $m=2$, the component $z^{-1} \partial_w B_m$ of \eqref{AmBm} 
will take on the form \eqref{sint} 
only if the eigenvalues 
of the corresponding operator $T$ satisfy, for $k\geq1$,
\[
\begin{cases}
i \, \mu_{0,k-1}- \mu_{0,k} = 0 &\text{if }k \text{ is even}\\
i \, \mu_{0,k-1}- \mu_{0,k} = c_0 &\text{if }k\text{ is odd},
\end{cases}
\]
with $c_0$ a constant independent of $k$. 
The operators of the form \eqref{TeC} whose eigenvalues satisfy these requirements are the subject of the following theorem.
In what follows we use the notations $s = \langle x,y\rangle$ and $t = \sqrt{\lvert x \rvert^2 \lvert y \rvert^2 - s^2}$. 
\begin{theorem}\label{theoTab2}
	In dimension $m=2$, the operator exponential
	\begin{equation*}
	T_{ab}=   \e^{i\frac{\pi}{2}  F_{ab}(\sqrt{ C})}\e^{i \frac{\pi}{2}  ( h-1 )   }
	\end{equation*}
	with $F_{ab}(x) = a 
	+ b \,\cD_{0110} (x) $ (as specified in \hyperref[moddc]{Theorem~\ref*{moddc}}) and $a,b\in \mZ_4$, 
	can be written as an integral transform whose kernel is given by
	\begin{equation}\label{kernelab2}
	K_2(x,y) = i^a\Bigl(  \frac{1+ i^b}{2} \bigl(  \cos(s) + i  \sin(s)\bigr)  + \frac{1- i^b}{2} \cos(t)+ (\ux \wedge \uy) \   \frac{1- i^b}{2} \, \frac{i\sin ( t)}{t} \Bigr).
	\end{equation}
\end{theorem}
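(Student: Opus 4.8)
The plan is to follow exactly the strategy used in the harmonic case (Theorem~\ref{theoTabc2}), adapting it to the $\mathfrak{osp}(1|2)$ setting with its non-scalar kernel. First I would invoke Theorem~\ref{solsC} and Proposition~\ref{intrC} to guarantee that $T_{ab}$ satisfies (I)--(III) and hence admits an integral-transform realization with kernel of the form \eqref{kernelC}, decomposed as $K_m = A_m - \lambda B_m + (\ux\wedge\uy)\,z^{-1}\partial_w B_m$ via \eqref{AmBm}. For $m=2$ we have $\lambda=0$, so the kernel reduces to $K_2 = A_2 + (\ux\wedge\uy)\,z^{-1}\partial_w B_2$, where the $-\lambda B_m$ term vanishes. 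Next I would compute the eigenvalues $\mu_{0,k}$ explicitly. From $F_{ab}(x)=a+b\,\cD_{0110}(x)$ and Table~4, the function $\cD_{0110}$ takes the periodic pattern $0,1,1,0$ on consecutive integers, so the eigenvalues $\mu_{0,k}=i^{F_{ab}(k+\lambda+1/2)}\,i^{k}$ (using the $m=2$ specialization $\lambda=0$, shift $1/2$) become the explicit four-periodic sequence in $k$; I expect $\mu_{0,0}=i^a$, $\mu_{0,1}=i^{a+b}\,i$, $\mu_{0,2}=i^a(-1)$, $\mu_{0,3}=i^{a+b}(-i)$, mirroring the harmonic pattern.

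With these eigenvalues in hand, I would substitute into the series \eqref{AmBm}. Using the limiting identity $\lim_{\lambda\to0}\Gamma(\lambda)C_k^\lambda(\cos\theta)=\tfrac{2}{k}\cos(k\theta)$ (as in the proof of Theorem~\ref{theoTabc2}) the scalar part $A_2$ becomes a cosine--sine series in $k\theta$ weighted by $\tfrac12(i\,\mu_{0,k-1}+\mu_{0,k})$; the key point is that the combination $i\,\mu_{0,k-1}+\mu_{0,k}$ collapses the four-periodicity into just the $J_0$, $J_{4n}$, $J_{4n+1}$, $J_{4n+2}$, $J_{4n+3}$ groupings needed for the Jacobi--Anger-type expansions \eqref{cost}, \eqref{coss}, \eqref{sins}. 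For the bivector part, $z^{-1}\partial_w B_2$ carries the weight $\tfrac12(i\,\mu_{0,k-1}-\mu_{0,k})$; by the eigenvalue pattern this vanishes for even $k$ and is a constant multiple of a single $i^a\tfrac{1-i^b}{2}$-type factor for odd $k$, exactly matching the hypothesis that precedes the theorem. For $m=2$ the Gegenbauer polynomials in the bivector part reduce to sines, so I would apply \eqref{sint} to sum this series into $\tfrac{i\sin(t)}{t}$ after the identifications $s=z\cos\theta$, $t=z\sin\theta$.

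Assembling the pieces with $s=z\cos\theta$ and $t=z\sin\theta$ yields the three Bessel sums as $\cos(s)$, $\cos(t)$, and $\sin(s)$ in the scalar part and $\sin(t)/t$ in the bivector part, with the coefficients $i^a\tfrac{1+i^b}{2}$, $i^a\tfrac{1-i^b}{2}$, and the factor $i$ on the $\sin(s)$ term, producing precisely \eqref{kernelab2}. The main obstacle I anticipate is the bookkeeping in the bivector component: one must correctly track how the reproducing-kernel conventions of Proposition~\ref{intrC}, the index shift $k-1$ in $C^{\lambda+1}_{k-1}$, and the $m=2$ reduction of the Gegenbauer polynomials to $\sin(k\theta)$ conspire so that \eqref{sint} applies cleanly, and to verify that the constant $c_0$ governing the odd-$k$ terms is exactly $i^a(1-i^b)$ so that the bivector coefficient comes out as $\tfrac{1-i^b}{2}\,\tfrac{i\sin(t)}{t}$ rather than off by a factor. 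Checking that the scalar combinations $\tfrac12(i\,\mu_{0,k-1}+\mu_{0,k})$ regroup correctly into the $\tfrac{1+i^b}{2}$ and $\tfrac{1-i^b}{2}$ weights is routine once the eigenvalues are pinned down, so I would present that computation compactly and reserve the detailed verification for the bivector summation.
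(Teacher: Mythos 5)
Your overall strategy is exactly the paper's: reduce to $m=2$ via Proposition~\ref{intrC}, use the limiting Gegenbauer identities, and sum the resulting Bessel series with formulas \eqref{cost}--\eqref{sins} and \eqref{sint}. However, there is a concrete error in your eigenvalue table that would derail the computation. You correctly note that for $m=2$ the Casimir $C$ has eigenvalues $\big(k+\tfrac12\big)^2$ and that $\cD_{0110}$ takes the values $0,1,1,0$ at $x-\tfrac12=0,1,2,3$, so $F_{ab}\big(k+\tfrac12\big)$ runs through $a,\ a+b,\ a+b,\ a$ and hence
\[
\mu_{0,0}=i^a,\qquad \mu_{0,1}=i^{a+b}\,i,\qquad \mu_{0,2}=i^{a+b}(-1),\qquad \mu_{0,3}=i^{a}(-i).
\]
Instead you wrote $\mu_{0,2}=i^a(-1)$ and $\mu_{0,3}=i^{a+b}(-i)$, which is the harmonic $\cE_{0101}$ pattern $a,a+b,a,a+b$ rather than the Clifford $\cD_{0110}$ pattern; the Clifford case does \emph{not} mirror the harmonic one here. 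This is not a harmless slip: the whole reason $\cD_{0110}$ appears is that with the correct values the bivector weight $\tfrac12\big(i\,\mu_{0,k-1}-\mu_{0,k}\big)$ vanishes for all even $k$ (e.g.\ $i\,\mu_{0,1}-\mu_{0,2}=i^{a+b+2}-i^{a+b+2}=0$) and equals the constant $i^{a+1}(1-i^b)$ for all odd $k$, which is exactly what makes \eqref{sint} applicable and yields the $\tfrac{1-i^b}{2}\,\tfrac{i\sin(t)}{t}$ term. With your values one gets $i\,\mu_{0,1}-\mu_{0,2}=i^{a+2}(i^b-1)\neq 0$ for $b\not\equiv 0 \pmod 4$, so the bivector series does not collapse to a single $\sin(t)/t$ term, and the scalar combinations $\tfrac12\big(i\,\mu_{0,k-1}+\mu_{0,k}\big)$ likewise fail to regroup into the $\tfrac{1\pm i^b}{2}$ weights. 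Once the eigenvalues are corrected, the rest of your outline goes through exactly as in the paper's proof.
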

\begin{proof}
	The proof goes along the same lines as the proof of \hyperref[theoTabc2]{Theorem~\ref*{theoTabc2}}.
	
	By \hyperref[solsC]{Theorem~\ref*{solsC}}, $T_{ab}$ satisfies the properties {(I)}--{(III)}. 
	For $m=2$, the kernel obtained in \hyperref[intrC]{Proposition~\ref*{intrC}} (for the formulation of
	$T$ as an integral transform), reduces to
	\begin{equation*}
	K_2(x,y) = \mu_{0,0} \, J_0(z) +    \sum_{k=1}^{+\infty}  ( i \,  \mu_{0,k-1}+ \mu_{0,k}) \, J_{k}( z) \, \cos (k \theta)  +   (\ux \wedge \uy) \    \sum_{k=1}^{+\infty}  ( i \,  \mu_{0,k-1}- \mu_{0,k}) \, J_{k}( z) \,  \frac{\sin (k\theta)}{ z \sin\theta} .
	\end{equation*} 
	This follows from $\lambda= 0$ and the identities (see \cite[Vol. {I}, section 3.15]{Erde}, formulas (14) and (15)), for $w = \cos \theta$ and integer $k\geq1$,
	\[
	\lim_{\lambda \to 0} \Gamma ( \lambda ) \ C_k^\lambda ( \cos \theta) = \frac{2}{ k} \cos (k \theta), 
	\quad \text{ and } \quad
	C_{k-1}^1(\cos \theta) = \frac{\sin (k\theta)}{\sin\theta}\rlap{.}
	\]
	Now, using the fact that the eigenvalues of $T_{ab}$ are 4-periodic in $k$, with (for $m=2$)
	\[
	\mu_{0,0} = i^a,\quad  \mu_{0,1} = i^{a+b} i,\quad  \mu_{0,2} =i^{a +b} (-1) ,\quad  \mu_{0,3} = i^{a} (-i),
	\]
	we can rewrite the kernel as
	\begin{align*}
	K_2(x,y) =  i^a \biggl( & J_0(z) +   2 \sum_{n=1}^{+\infty}J_{4n}( z) \, \cos (4n \theta) - i^{b}\,   2 \sum_{n=0}^{+\infty} J_{4n+2}( z) \, \cos \bigl((4n+2) \theta\bigr)  \\
	&+ (1+  i^{b})\,  i\,\Bigl(   \sum_{n=0}^{+\infty} J_{4n+1}( z) \, \cos  \bigl((4n+1) \theta\bigr)
	-   \sum_{n=0}^{+\infty}  J_{4n+3}( z) \, \cos \bigl((4n+3) \theta\bigr) \Bigr) \\
	& +  (\ux \wedge \uy) \    (1-  i^{b})\,i\,   \sum_{n=0}^{+\infty} J_{2n+1}( z)  \, \frac{\sin  \bigl((2n+1) \theta\bigr)}{ z \sin \theta}   \biggr) .
	\end{align*} 
	The formulas \eqref{cost}, \eqref{coss}, \eqref{sins}, \eqref{sint} and $s=zw=z\cos\theta$, $t=z \sqrt{1-w^2} = z \sin \theta$ then yield
	\begin{equation*}
	K_2(x,y) = i^a\Bigl( \frac{1}{2} \bigl(\cos(s) + \cos(t) \bigr) + \frac{ i^b}{2} \bigl(\cos(s) - \cos(t) \bigr) +  \frac{(1+  i^{b})i}{2} \sin(s) \Bigr)+ (\ux \wedge \uy) \   \frac{1- i^b}{2} \, \frac{i\sin ( t)}{t} \Bigr).
	\end{equation*}
\end{proof}

Using \hyperref[lemmakC]{Lemma~\ref*{lemmakC}} we now find:

\begin{theorem}\label{theoTab}
	In even dimension $m$, the operator exponential 
	\begin{equation*}
	T_{ab}=   \e^{i\frac{\pi}{2}  F_{ab}(\sqrt{C})}\e^{i \frac{\pi}{2}  ( h-\frac{m}{2} )   }
	\end{equation*}
	with $F_{ab}(x) = a 
	+ b \,\cD_{0110} (x) $ (as specified in \hyperref[moddc]{Theorem~\ref*{moddc}}), 
	can be written as an integral transform whose kernel is given by
	\begin{align}
	\begin{split}
	\label{kernelab}
	K_m(x,y) = \ & i^{a-\lambda} \, \biggl(  \frac{1- i^b}{2} \Bigl( \partial_s -  \frac{s}{t} \   \partial_t \Bigr)^{\lambda} \cos(t) - \lambda    \frac{1- i^b}{2} \, \Bigl( \partial_s -  \frac{s}{t} \   \partial_t \Bigr)^{\lambda-1} \frac{i \sin ( t)}{t}\\ & 
	\ +\frac{1+ i^b}{2} \bigl(  \cos(s) + i\,  \sin(s)\bigr)   + (\ux \wedge \uy) \   \frac{1- i^b}{2} \Bigl( \partial_s -  \frac{s}{t} \   \partial_t \Bigr)^{\lambda} \frac{i \sin ( t)}{t}  \biggr),
	\end{split}
	\end{align}
	with $\lambda=(m-2)/2$. Moreover, one has
	\begin{align*}
	\Bigl( \partial_s -  \frac{s}{t} \   \partial_t \Bigr)^{\lambda}\cos(t) =  \sqrt{\frac{\pi}{2}} \sum_{\ell =0}^{\left\lfloor  \frac{m-2}{4} \right\rfloor} s^{\frac{m}{2}-1-2 \ell}\    \frac{1}{2^{\ell} \ell!} \frac{\Gamma(\frac{m}{2})}{\Gamma(\frac{m}{2}-2\ell)}   \frac{ {J}_{(m-2\ell-3)/2}(t)}{t^{(m-2\ell-3)/2}} .
	\\
	\Bigl( \partial_s -  \frac{s}{t} \   \partial_t \Bigr)^{\lambda-1} \frac{\sin ( t)}{t} =  \sqrt{\frac{\pi}{2}} \sum_{\ell =0}^{\left\lfloor  \frac{m-4}{4} \right\rfloor} s^{\frac{m}{2}-2-2 \ell}\    \frac{1}{2^{\ell} \ell!} \frac{\Gamma(\frac{m}{2}-1)}{\Gamma(\frac{m}{2}-1-2\ell)}   \frac{ {J}_{(m-2\ell-3)/2}(t)}{t^{(m-2\ell-3)/2}} .
	\\
	\Bigl( \partial_s -  \frac{s}{t} \   \partial_t \Bigr)^{\lambda} \frac{\sin ( t)}{t} =  \sqrt{\frac{\pi}{2}} \sum_{\ell =0}^{\left\lfloor  \frac{m-2}{4} \right\rfloor} s^{\frac{m}{2}-1-2 \ell}\    \frac{1}{2^{\ell} \ell!} \frac{\Gamma(\frac{m}{2})}{\Gamma(\frac{m}{2}-2\ell)}   \frac{ {J}_{(m-2\ell-1)/2}(t)}{t^{(m-2\ell-1)/2}} .
	\end{align*}
\end{theorem}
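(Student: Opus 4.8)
The plan is to prove this by induction on the even dimension, in complete parallel with the harmonic Theorem~\ref{theoTabc}: the two-dimensional kernel of Theorem~\ref{theoTab2} supplies the base case, and the recursions of Lemma~\ref{lemmakC} let me climb up in steps of two. First I would put the $m=2$ kernel into the component form $K_m = A_m - \lambda B_m + (\ux\wedge\uy)\,z^{-1}\partial_w B_m$ used in \eqref{AmBm}. Since $\lambda=0$ when $m=2$, reading off \eqref{kernelab2} gives
\[
A_2 = i^a\Bigl(\tfrac{1+i^b}{2}\bigl(\cos s + i\sin s\bigr) + \tfrac{1-i^b}{2}\cos t\Bigr), \qquad z^{-1}\partial_w B_2 = i^a\,\tfrac{1-i^b}{2}\,\frac{i\sin t}{t}.
\]
The key observation at this stage is that only the combination $z^{-1}\partial_w B_2$, read directly off the bivector part, will be needed later; the function $B_2$ itself never has to be determined.

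Next I would iterate Lemma~\ref{lemmakC}. As the operator $z^{-1}\partial_w$ is independent of $m$, applying the two recursions $\lambda$ times yields $A_m = (-i\,z^{-1}\partial_w)^{\lambda}A_2$ and $B_m = (-i\,z^{-1}\partial_w)^{\lambda}B_2$. Exactly as in the proof of Theorem~\ref{theoTabc}, the substitution $s=zw$, $t=z\sqrt{1-w^2}$ turns the radial operator into $z^{-1}\partial_w = \partial_s - \frac{s}{t}\partial_t$, which I abbreviate $L$. Two simplifications then carry the computation. On the one hand $L$ kills the (absent) $t$-dependence of $e^{is}=\cos s + i\sin s$, so $(-iL)^{\lambda}$ acts as the identity on the exponential term (because $(-i)^{\lambda}L^{\lambda}e^{is}=(-i)^{\lambda}i^{\lambda}e^{is}=e^{is}$), and that term is inherited unchanged from $A_2$ with coefficient $i^a\tfrac{1+i^b}{2}$. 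On the other hand, writing $B_m=(-i)^{\lambda}L^{\lambda-1}(LB_2)$ lets me express both the scalar contribution $-\lambda B_m$ and the bivector contribution $(\ux\wedge\uy)\,z^{-1}\partial_w B_m = (\ux\wedge\uy)(-i)^{\lambda}L^{\lambda}(LB_2)$ entirely through the already-known $LB_2=i^a\tfrac{1-i^b}{2}\tfrac{i\sin t}{t}$, so that $L$ never has to be inverted. Collecting the $\cos t$, $e^{is}$, $\tfrac{\sin t}{t}$ and bivector pieces and pushing the factor $(-i)^{\lambda}=i^{-\lambda}$ into the common prefactor $i^{a-\lambda}$ reproduces \eqref{kernelab}.

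It then remains to evaluate the three operators $L^{\lambda}\cos t$, $L^{\lambda-1}\tfrac{\sin t}{t}$ and $L^{\lambda}\tfrac{\sin t}{t}$ in closed form. The first identity is literally \eqref{qsdf}, already established in Theorem~\ref{theoTabc}, so nothing new is needed there. I would prove the other two by the same induction on $\lambda$ (equivalently on $m$ in steps of two): the base cases follow from $\tfrac{\sin t}{t}=\sqrt{\tfrac{\pi}{2}}\,t^{-1/2}J_{1/2}(t)$, the analogue for the sine of the identity \eqref{sincos1} used for the cosine, and the inductive step is driven by the Bessel derivative relation \eqref{ddt}, with the upper summation limits forcing the usual split into $\lambda$ even and $\lambda$ odd, treated exactly as in \eqref{qsdf}.

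The hard part will be this last block of Bessel-sum inductions. The genuine bookkeeping is keeping the half-integer orders $(m-2\ell-3)/2$ and $(m-2\ell-1)/2$, the quotients of $\Gamma$-values, and the floor-function limits mutually consistent as the order of $L$ is raised by one, while simultaneously tracking the powers of $i$ so that the scalar part $A_m-\lambda B_m$ — which mixes the identity action of $(-iL)^{\lambda}$ on $e^{is}$ with the true $L$-action on $\cos t$ and $\tfrac{\sin t}{t}$ — collapses into the single prefactor $i^{a-\lambda}$. By comparison, the reduction from $K_2$ to $K_m$ through Lemma~\ref{lemmakC}, and the realization that only $LB_2$ rather than $B_2$ is required, are essentially mechanical once the base case has been cast in the form above.
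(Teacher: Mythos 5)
Your strategy is exactly the paper's: take the $m=2$ kernel of Theorem~\ref{theoTab2} as the base case, iterate the recursions of Lemma~\ref{lemmakC}, and observe that only $z^{-1}\partial_w B_2$ (read off the bivector part of $K_2$) is ever needed, never $B_2$ itself; the Bessel-sum identities then follow by the same $\partial_s-\frac{s}{t}\partial_t$ substitution and \eqref{ddt}-induction as in \eqref{qsdf}, with base cases supplied by \eqref{sincos1} and $J_{1/2}$. All of that is sound and matches the paper's (much terser) argument.

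There is, however, one concrete problem in your final assembly step, and it is worth flagging because your own intermediate computation exposes it. You correctly find that $(-i\,z^{-1}\partial_w)^{\lambda}$ acts as the identity on the exponential piece, so that term survives into $A_m$ with coefficient $i^a\frac{1+i^b}{2}$ and \emph{no} factor $(-i)^{\lambda}$. You then claim that ``pushing $(-i)^{\lambda}$ into the common prefactor $i^{a-\lambda}$'' reproduces \eqref{kernelab} — but the exponential term has no such factor to push, so what your calculation actually yields is
\begin{equation*}
i^{a}\,\tfrac{1+i^b}{2}\bigl(\cos s + i \sin s\bigr) \;+\; i^{a-\lambda}\Bigl(\tfrac{1-i^b}{2}\bigl(\partial_s-\tfrac{s}{t}\partial_t\bigr)^{\lambda}\cos t - \lambda\,\tfrac{1-i^b}{2}\bigl(\partial_s-\tfrac{s}{t}\partial_t\bigr)^{\lambda-1}\tfrac{i\sin t}{t} + (\ux\wedge\uy)\,\tfrac{1-i^b}{2}\bigl(\partial_s-\tfrac{s}{t}\partial_t\bigr)^{\lambda}\tfrac{i\sin t}{t}\Bigr),
\end{equation*}
which differs from \eqref{kernelab} by a factor $i^{\lambda}$ on the exponential term. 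Your version is the correct one: for $b\equiv 0\pmod 4$ the operator is $i^a\cF$, whose kernel must be $i^a\e^{i\langle x,y\rangle}$, not $i^{a-\lambda}\e^{i\langle x,y\rangle}$. (In the harmonic analogue \eqref{kernelabc} no such issue arises because $\partial_s^{\lambda}$ is kept explicitly on $\cos s$ and $\sin s$ there.) So either state the discrepancy with the printed formula, or restate the target with the global prefactor applied only to the $t$-dependent terms; as written, the claim that the collection step ``reproduces \eqref{kernelab}'' is not consistent with the coefficient you yourself derived.
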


\begin{proof}
	This is analogous to the proof of \hyperref[theoTabc]{Theorem~\ref*{theoTabc}} in the harmonic case. 
	For $m=2$ we have $\lambda = 0$, and the expression \eqref{kernelab} coincides with
	\eqref{kernelab2} which was obtained in 
	\hyperref[theoTab2]{Theorem~\ref*{theoTab2}}. 
	Starting from $K_2$, we find the kernel in every even dimension $m>2$ 
	by means of the recursive relations obtained in \hyperref[lemmakC]{Lemma~\ref*{lemmakC}}. 
	Note that when moving from dimension $m=2$ to $m=4$, although we have no explicit expression for $B_2$, we can obtain $B_{4}$ directly from the bivector part in $K_2$:
	\begin{equation*}
	K_2(x,y) = A_2(w,z) + (\ux \wedge \uy) \   z^{-1} \partial_w  B_2(w,z)= A_2(w,z)  + (\ux \wedge \uy) \  i \, B_4(w,z).
	\end{equation*}
\end{proof}

\subsubsection{Specific cases}

The kernel \eqref{kernelab} in \hyperref[theoTab]{Theorem~\ref*{theoTab}} consists of a linear interpolation of two terms, with coefficients $(1+i^b)/2$ and $(1-i^b)/2$. 
The first term is precisely the kernel of the classical Fourier transform, while the second term contains a non-scalar part and can be written as a finite sum of Bessel functions. 
This second term closely resembles an expression found for the kernel of the Clifford-Fourier transform in
\cite{DBXu} and also for similar transforms devised in \cite{DBNS}.

We can isolate this second term by putting $b=2$ in \eqref{kernelab}. 
Multiplying by $ i^{\lambda-a}$ to eliminate a scalar multiplicative factor, the kernel reduces to
\begin{align*}
K_m(x,y) =  \Bigl(  \Bigl( \partial_s -  \frac{s}{t} \   \partial_t \Bigr)^{\lambda} \cos(t) - \lambda  \,i \Bigl( \partial_s -  \frac{s}{t} \   \partial_t \Bigr)^{\lambda-1} \frac{\sin ( t)}{t} \Bigr)   
+ (\ux \wedge \uy) \   i\,  \Bigl( \partial_s -  \frac{s}{t} \   \partial_t \Bigr)^{\lambda} \frac{\sin ( t)}{t},
\end{align*}
which corresponds to the operator exponential
\begin{equation*}
T = \e^{i\frac{\pi}{2}\lambda  } \e^{i\frac{\pi}{2}( C - \frac14 )
}\e^{i \frac{\pi}{2}  ( h-\frac{m}{2} )   }  
= \e^{i\frac{\pi}{2}( C - \frac54 + h )   } 
.
\end{equation*}
For dimension $m=2$ this kernel reduces to
\begin{equation*}
K_2(x,y) =  \cos(t)+ (\ux \wedge \uy) \ i \,  \frac{\sin ( t)}{t}.
\end{equation*}

\section{Conclusions and remarks}
\label{sec:4}

We started our investigation from a list of properties {(i)}--{(iii)} which 
specifically highlights two symmetric structures underlying the higher dimensional Fourier transform, namely the orthogonal symmetry and an algebraic symmetry with respect to an operator realization of the Lie algebra $\mathfrak{sl}(2)$. 
We then obtained a complete set of solutions satisfying {(i)}--{(iii)} in the form of operator exponentials, which includes in particular the classical Fourier transform. 
The transforms we have constructed also demonstrate other interesting features. 
For instance, for each of them, we found a corresponding formulation as an integral transform. Moreover, for a select set of operators when written as an integral transform the kernel could even be reduced to a closed formula being polynomially bounded. 

In the process of describing these solutions, we gave a brief overview on the subject of integer-valued polynomials and a generalization thereof, i.e.~polynomials that are integer-valued on the set of square numbers, the set of half-integers or the set of squares of half-integers. 

These results were subsequently lifted to the setting of Clifford analysis where
the Lie algebra $\mathfrak{sl}(2)$ in 
the algebraic symmetry was generalized to the Lie superalgebra $\mathfrak{osp}(1|2)$, resulting in a new list of properties {(I)}--{(III)}. 
Also in this case the obtained operators form a complete set of solutions and they all have an equivalent formulation as an integral transform. 
For a select set of transforms we again find a polynomially bounded formula for the kernel.
The findings obtained here coalesce with those for similar generalized Fourier transforms in the context of Clifford analysis as described in 
\cite{DBNS,DBXu}.

We note that in both the regular and the Clifford setting more closed formulas for kernels are found for even dimension than in the odd dimensional case. 
The reason for this is that, although the recursive relations obtained in \hyperref[lemmakC]{Lemma~\ref*{lemmak}} and \hyperref[lemmakC]{Lemma~\ref*{lemmakC}} also hold for odd dimension, we have no formula for dimension $m=3$ to use as starting point and as a consequence no such form is found.

Finally, we note that when moving to Clifford analysis we replaced the Helmholtz relations by their more restrictive variant called the Clifford-Helmholtz relations. 
Nevertheless, one may also consider solutions to the regular Helmholtz relations in the context of Clifford analysis. 
Following the same strategy we applied before, this leads to the subset of $ \mathcal{U} \big( \mathfrak{osp}(1|2) \big)$ consisting of the elements that commute with all elements of $\mathfrak{sl}(2)$, instead of $\mathfrak{osp}(1|2)$. This subset is generated by $S$, the Scasimir element of $\mathfrak{osp}(1|2)$, whose eigenvalues on the basis $\{\psi_{j,k,\ell}\}$ square to those of the Casimir $C$, given by \eqref{evC}. Accordingly, this brings us to polynomials in $S$ that again take integer values when acting on the basis $\{\psi_{j,k,\ell}\}$. This gives rise to a bigger class of more general operator exponentials which includes in particular the operators we obtained by imposing the Clifford-Helmholtz relations.

\appendix

\section{Appendix}
\label{sec:5}

Here we give an overview of some definitions and results used in the main text, as well as some proofs that have been omitted from the text.

\subsection{Special functions}
\label{secA1}

For $\alpha>-1$, $p$ a positive integer and $\Gamma(\cdot)$ the Gamma function, the generalized Laguerre polynomials are given by
\begin{equation*}
L_j^{\alpha} (x) = \sum_{n=0}^j \frac{\Gamma(j+\alpha+1)}{n! \, (j-n)! \, \Gamma(n+\alpha+1)} (-x)^n.
\end{equation*}
For $k\in \mZ_{\geq0}$ and $\lambda > -1/2$, the Gegenbauer polynomials are defined as
\begin{equation*}
C_k^{\lambda}(w) = \sum_{j=0}^{\lfloor \frac{k}{2} \rfloor} (-1)^j \frac{\Gamma ( k-j+\lambda)}{\Gamma(\lambda)\, j!\, (k-2j)!} (2w)^{n-2j}.
\end{equation*}
They are a special case of the Jacobi polynomials and satisfy 
the differentiation 
property (see e.g.~\cite{MR1827871,Sz}):
\begin{equation}\label{Geg0}
\frac{\mathrm{d}}{\mathrm{d}w} 
C_k^{\lambda}(w) 
= 2 \lambda \ C_{k-1}^{\lambda + 1}(w).
\end{equation}

The Bessel function can be defined by the power series
\begin{equation}\label{taylorbessel}
J_\nu(t) = \sum_{n=0}^{\infty} \frac{(-1)^n}{n!\ \Gamma (n+\nu+1)} \left ( \frac{t}{2} \right )^{2n+\nu},
\end{equation}
where $\nu\in\mathbb{C}$ is the order of the Bessel function (see e.g.~\cite{MR0010746}). 
In particular, for $\nu={1}/{2}$ and $\nu=-{1}/{2}$, the power series \eqref{taylorbessel} reduces to 
\begin{equation}
\label{sincos1}
J_{1/2} (t)  = \sqrt{\frac{2}{\pi t } } \sin t,\qquad
J_{-1/2} (t)  = \sqrt{\frac{2}{\pi t } } \cos t,
\end{equation}
as found in \cite[p.\,54]{MR0010746}.
The Bessel functions satisfy (see \cite[p.\,45]{MR0010746})
\begin{equation}\label{ddt}
-\frac{1}{t} \frac{\mathrm d}{\mathrm{d}  t} 
\lbrack t^{-\nu} J_\nu( t) \bigr \rbrack=  t^{-(\nu+1)}J_{\nu+1}( t).
\end{equation}

\subsection{Integer-valued polynomials}
\label{sec:A2}

We will now elaborate on the identities showcased 
in \hyperref[moddb]{Theorem~\ref*{moddb}} and \hyperref[moddc]{Theorem~\ref*{moddc}}
of which the proofs were omitted from the main text. 
Before delving into the actual proofs, we first show some auxiliary results. 
In subsection~\ref{ivp} we already mentioned the polynomials $E_n(x)$ and $D_n(x)$ exhibiting a property of periodicity when evaluated modulo 4. 
We now prove a similar result for the more general integer-valued polynomials $\binom{x}{n}$ from which the other periodicity results follow. 
\begin{proposition}\label{period}
	For $n,N\in \mZ_{\geq0}$ such that $n < 2^N$, and $x\in \mZ$, one has
	\[
	\binom{x}{n} \equiv \binom{x+2^{N+1}}{n} \pmod{ 4}
	\]
\end{proposition}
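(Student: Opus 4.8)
The plan is to reduce the congruence to a divisibility statement about the binomial coefficients of $2^{N+1}$ by means of the Vandermonde convolution. Viewing both sides as polynomials in $x$, the identity $\binom{x+y}{n}=\sum_{j=0}^{n}\binom{x}{j}\binom{y}{n-j}$ holds for all arguments (both sides are polynomials of degree $n$ in $x$ that agree on the nonnegative integers, hence agree identically, in particular at every integer $x$). Specializing $y=2^{N+1}$ and reindexing $i=n-j$ gives
\[
\binom{x+2^{N+1}}{n}=\sum_{i=0}^{n}\binom{x}{n-i}\binom{2^{N+1}}{i}.
\]
The $i=0$ term is precisely $\binom{x}{n}$, so subtracting it turns the desired congruence into
\[
\sum_{i=1}^{n}\binom{x}{n-i}\binom{2^{N+1}}{i}\equiv 0\pmod 4 .
\]
Since $x\in\mZ$, each factor $\binom{x}{n-i}$ is an integer (by \eqref{xchn}), so it suffices to prove that $4\mid\binom{2^{N+1}}{i}$ for every $1\le i\le n$.

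To establish this divisibility I would compute the $2$-adic valuation of $\binom{2^{N+1}}{i}$. Writing $\binom{2^{N+1}}{i}=\frac{2^{N+1}}{i}\binom{2^{N+1}-1}{i-1}$ and noting that $2^{N+1}-1$ has all of its lowest $N+1$ binary digits equal to $1$, Lucas' theorem shows $\binom{2^{N+1}-1}{i-1}$ is odd for every admissible $i$. Hence $v_2\!\big(\binom{2^{N+1}}{i}\big)=(N+1)-v_2(i)$. (Alternatively, Legendre's formula yields $v_2\!\big(\binom{2^{N+1}}{i}\big)=s_2(i)+s_2(2^{N+1}-i)-1$, with $s_2$ the binary digit sum; either form reduces the question to the $2$-divisibility of $i$.)

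The crucial—and only—place the hypothesis $n<2^N$ enters is in bounding $v_2(i)$. For $1\le i\le n\le 2^N-1$, writing $i=2^{a}m$ with $m$ odd forces $2^{a}\le i<2^N$, so $a=v_2(i)\le N-1$; consequently $v_2\!\big(\binom{2^{N+1}}{i}\big)=(N+1)-v_2(i)\ge 2$, i.e. $4\mid\binom{2^{N+1}}{i}$. This is exactly the step that fails at $i=2^N$ (where the valuation drops to $N+1-N=1$), which is why the bound $n<2^N$ is sharp. Substituting back, every summand is divisible by $4$ and the congruence follows. I expect the main obstacle to be packaging the valuation computation cleanly: the Vandermonde reduction is routine, but one must be careful to justify the polynomial identity for integer (possibly negative) arguments and, above all, to verify the oddness of $\binom{2^{N+1}-1}{i-1}$, since the entire estimate rests on that single fact.
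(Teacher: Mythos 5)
Your proof is correct, and it takes a genuinely different route from the one in the paper. The paper fixes $n=2^N-1$ (asserting the smaller cases are similar), expands the product $\prod_{k=0}^{2^N-2}(2^{N+1}+x-k)$ term by term, and bounds from below the number of factors $2$ in any term containing the shift $2^{N+1}$, comparing against the $2^N-N-1$ factors of $2$ in the denominator $(2^N-1)!$; the key counting step there is argued somewhat informally (``the absolute minimal case thus ensues when\dots''). You instead invoke the Vandermonde convolution as a polynomial identity, which reduces the claim to the single divisibility statement $4\mid\binom{2^{N+1}}{i}$ for $1\le i\le n$, and you settle that via the exact valuation $v_2\bigl(\binom{2^{N+1}}{i}\bigr)=N+1-v_2(i)$, obtained from the absorption identity together with the oddness of $\binom{2^{N+1}-1}{i-1}$ (Lucas). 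Your argument treats all $n<2^N$ uniformly rather than only the extreme case, makes the hypothesis $n<2^N$ visibly sharp (at $i=2^N$ the valuation drops to $1$, which is exactly why \hyperref[periodE]{Corollary~\ref*{periodE}} needs the extra factor $2$ to reach $n=2^N$), and replaces the paper's delicate minimality count with a one-line valuation computation. The only ingredients you import beyond the paper are the polynomial form of Vandermonde and Lucas' (or Kummer's/Legendre's) theorem, both standard; your justification for extending Vandermonde to negative integer $x$ --- two degree-$n$ polynomials agreeing on all nonnegative integers coincide --- is adequate, as is the observation that each $\binom{x}{n-i}$ is an integer for $x\in\mZ$ by \eqref{xchn}.
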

\begin{proof}
	Take $N\in \mZ_{\geq0}$ and $n=2^N-1$, the proof for every $n<2^N-1$ is similar. 
	
	From \eqref{xchn}, we have by definition
	\begin{equation}\label{x2N}
	\binom{x+2^{N+1}}{2^N-1} = \frac{1}{(2^N-1)!} \prod_{k=0}^{2^N-2} (2^{N+1}+x-k).
	\end{equation}
	When expanding this product, each term consists of a number of factors of the form $x-k$ multiplied with a power of $2^{N+1}$. We will show that every term containing a factor $2^{N+1}$ is divisible by four and hence congruent 0 modulo 4.
	
	First, notice that the denominator $(2^N-1)!$ in \eqref{x2N} contains $2^N-N-1$ times the factor 2. This follows from adding the number of integers smaller than $2^N-1$ that are divisible by the powers $2^j$ for $j$ ranging from 1 to $N-1$. 
	
	Next, we determine when a term containing a factor $2^{N+1}$ in the expansion of the numerator of \eqref{x2N} has a minimal number of factors 2 in its prime factorization. 
	This is the case when its other factors in the product \eqref{x2N} are the integers of the set $A_x \defeq\{\, x-k \mid k\in \mZ_{\geq0} \text{ and } 0\leq k \leq 2^{N}-2 \,\}$ 
	that contain less than $N+1$ factors 2 and its remaining factors being $2^{N+1}$. For a given $x\in\mZ$, the set $A_x$ consists of $2^N-1$ consecutive integers. 
	The absolute minimal case thus ensues when the set $A_x$ contains $2^{N}-2$ integers that are not divisible by $2^{N-1}$.  
	The product of these integers gives a minimum of $2^{N}-2N$ factors 2. 
	Hence, a factor $2^{N+1}$ multiplied by such a product of integers contains at least $2^N-N+1$ times the factor 2.
	
	As the denominator in \eqref{x2N} contains $2^N-N-1$ times the factor 2, 
	every term in the expansion of \eqref{x2N} with a factor $2^{N+1}$ must contain at least two factors 2 or thus be divisible by four.
\end{proof}

\begin{corollary}\label{periodE}
	For natural numbers $n,N\in \mZ_{\geq0}$ such that $n \leq 2^N$, and $x\in \mZ_{\geq0}$, one has
	\[
	E_{n}(x) \equiv E_n\big(x+2^{N+2}\big) \pmod{ 4},
	\]
	and
	\[
	2\, E_{n}(x) \equiv 2\, E_n\big(x+2^{N+1}\big) \pmod{ 4},
	\]
\end{corollary}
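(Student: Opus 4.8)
The plan is to reduce both congruences to the binomial identity established in the proof that $E_n$ is integer-valued, namely
\[
E_n(x) = 2\binom{x+n}{2n} - \binom{x+n-1}{2n-1},
\]
which splits $E_n$ into a doubled binomial of upper index $2n$ and a plain binomial of upper index $2n-1$. I would treat these two pieces separately, exploiting that the hypothesis $n \leq 2^N$ forces $2n-1 < 2^{N+1}$ and $2n \leq 2^{N+1} < 2^{N+2}$. Alongside \hyperref[period]{Proposition~\ref*{period}} (congruences modulo $4$) I would first record its easier companion modulo $2$: for $j < 2^M$ and $x\in\mZ$ one has $\binom{x+2^{M}}{j} \equiv \binom{x}{j} \pmod{2}$. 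This holds because, by Lucas' theorem, $\binom{x}{j} \bmod 2$ depends only on the binary digits of $x$ in positions below $M$ whenever $j<2^M$, and adding $2^M$ leaves those digits unchanged; alternatively it follows from the same counting of powers of $2$ that drives the proof of \hyperref[period]{Proposition~\ref*{period}}.

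For the first assertion (period $2^{N+2}$ modulo $4$), the term $\binom{x+n-1}{2n-1}$ is congruent modulo $4$ to its shift by $2^{N+2}$ directly from \hyperref[period]{Proposition~\ref*{period}} applied with exponent $N+1$, which is licensed by $2n-1 < 2^{N+1}$. For the doubled term it suffices to control $\binom{x+n}{2n}$ modulo $2$, and since $2n < 2^{N+2}$ the mod-$2$ companion gives periodicity with period $2^{N+2}$; multiplying by $2$ preserves this modulo $4$. Adding the two contributions yields $E_n(x) \equiv E_n(x+2^{N+2}) \pmod{4}$.

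For the second assertion (period $2^{N+1}$ modulo $4$) I would observe that
\[
2\,E_n(x) = 4\binom{x+n}{2n} - 2\binom{x+n-1}{2n-1} \equiv -2\binom{x+n-1}{2n-1} \pmod{4},
\]
so the doubled-upper-index term drops out modulo $4$ and only the mod-$2$ behaviour of $\binom{x+n-1}{2n-1}$ is relevant. Since $2n-1 < 2^{N+1}$, the mod-$2$ companion with exponent $N+1$ gives $\binom{x+n-1+2^{N+1}}{2n-1} \equiv \binom{x+n-1}{2n-1} \pmod{2}$, and hence $2\,E_n(x) \equiv 2\,E_n(x+2^{N+1}) \pmod{4}$.

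The one genuine subtlety — and the reason \hyperref[period]{Proposition~\ref*{period}} alone does not close the argument — is that modulo $2$ the fundamental period of $\binom{\cdot}{j}$ is shorter than modulo $4$ (period $2^{M}$ versus $2^{M+1}$ for $j<2^M$). Consequently the factor-of-$2$ terms cannot simply be passed through the mod-$4$ proposition; isolating the mod-$2$ refinement is exactly what delivers the tight periods $2^{N+2}$ and $2^{N+1}$ rather than a weaker $2^{N+3}$. Everything else is routine bookkeeping with the inequalities supplied by $n\leq 2^N$.
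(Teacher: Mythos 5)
Your proof is correct, but it takes a different route from the paper. The paper rewrites $E_n(x)=\tfrac{2}{(2n)!}\,x\prod_{k=0}^{2n-2}(x-n+1+k)$ and then reruns, directly on this product of $2n$ linear factors, the same $2$-adic valuation count that proves Proposition~\ref{period} (every cross term produced by the shift carries enough factors of $2$ to vanish modulo $4$). You instead exploit the identity $E_n(x)=2\binom{x+n}{2n}-\binom{x+n-1}{2n-1}$ from the integrality proof, apply Proposition~\ref{period} verbatim to the undoubled binomial (licensed by $2n-1<2^{N+1}$), and handle the doubled binomial with a mod-$2$ Lucas-type periodicity $\binom{x+2^M}{j}\equiv\binom{x}{j}\pmod 2$ for $j<2^M$; for the second congruence the term $4\binom{x+n}{2n}$ drops out entirely and only the mod-$2$ behaviour of $\binom{x+n-1}{2n-1}$ matters. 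Your argument is more modular: it avoids redoing the valuation bookkeeping for a new product, and it makes transparent exactly why the two periods $2^{N+2}$ and $2^{N+1}$ differ by a factor of two (a coefficient of $2$ halves the required period), which is the point your closing remark correctly isolates. The paper's version buys uniformity -- one counting technique covers the Proposition and both Corollaries (including the analogous statement for $D_n$) -- at the cost of a terser, less self-contained justification. The only loose end in your write-up is the degenerate case $n=0$, where the binomial decomposition is not available, but there $E_0\equiv 1$ and both congruences are trivial.
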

\begin{proof}
	Writing \eqref{Enx} as
	\begin{equation*}
	E_n(x) 
	= \frac{2}{(2n)!} \prod_{k=0}^{n-1} (x^2-k^2) =  \frac{2}{(2n)!} \ x \prod_{k=0}^{2n-2} (x-n+1+k) ,
	\end{equation*}
	the periodicity follows by the same reasoning as used in the proof of \hyperref[period]{Proposition~\ref*{period}}.
\end{proof}

We need one more lemma which will prove to be useful in the proof of \hyperref[moddb]{Theorem~\ref*{moddb}}.

\begin{lemma}\label{oddE}
	For $n$ an odd integer and $k$ an even integer, we have
	\[
	E_{n}(k) \equiv 0 \pmod{4}.
	\]
\end{lemma}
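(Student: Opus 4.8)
The plan is to leverage the closed form $E_n(x) = \frac{x}{n}\binom{x+n-1}{2n-1}$ that was derived in the proof that the $E_n$ are integer-valued, and then argue $2$-adically. Write $v_2$ for the $2$-adic valuation. Since we already know that $E_n(k)\in\mZ$, it suffices to show that $4$ divides it, i.e.\ that $v_2\big(E_n(k)\big)\ge 2$. Treating $E_n(k)=\frac{k}{n}\binom{k+n-1}{2n-1}$ as a product of rationals, additivity of $v_2$ gives
\[
v_2\big(E_n(k)\big) = v_2(k) - v_2(n) + v_2\!\binom{k+n-1}{2n-1}.
\]
Because $n$ is odd we have $v_2(n)=0$, and because $k$ is even we have $v_2(k)\ge 1$; so the entire problem reduces to showing that the binomial coefficient $\binom{k+n-1}{2n-1}$ is even.

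The key step is therefore the parity of $\binom{k+n-1}{2n-1}$. Set $A = k+n-1$ and $B = 2n-1$. As $k$ is even and $n$ is odd, $A = k+(n-1)$ is even while $B$ is odd. I would exploit the absorption identity $B\binom{A}{B} = A\binom{A-1}{B-1}$, which upon taking $v_2$ and using $v_2(B)=0$ yields
\[
v_2\!\binom{A}{B} = v_2(A) + v_2\!\binom{A-1}{B-1} - v_2(B) \ \ge\ v_2(A) \ \ge\ 1 .
\]
Hence $\binom{k+n-1}{2n-1}$ is even. Combining this with the previous display gives $v_2\big(E_n(k)\big) \ge 1+1 = 2$, which is exactly $E_n(k)\equiv 0 \pmod 4$.

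One degenerate case needs a word: when $0\le k\le n-1$ the integer $k$ is one of the roots $\{0,\pm1,\dots,\pm(n-1)\}$ of $E_n$, so $E_n(k)=0$ and the claim is trivial; equivalently $\binom{k+n-1}{2n-1}=0$ there and the valuation bound holds vacuously, while for $k\ge n$ the binomial $\binom{A-1}{B-1}$ above is a genuine (nonnegative-valuation) coefficient. Since $E_n(-x)=E_n(x)$ it suffices to treat $k\ge 0$. The main, and really the only, obstacle is the parity of the binomial coefficient; the absorption-identity argument settles it cleanly without invoking Kummer's theorem, though the latter would give the same conclusion, since $B=2n-1$ and $A-B=k-n$ are both odd and hence force a carry in the units digit of their base-$2$ sum.
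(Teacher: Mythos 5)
Your proof is correct and follows essentially the same route as the paper: both start from the closed form $E_n(x)=\frac{x}{n}\binom{x+n-1}{2n-1}$, pull one factor of $2$ from the prefactor $x$ and a second from the binomial via the absorption identity $B\binom{A}{B}=A\binom{A-1}{B-1}$, using that $n$ and $2n-1$ are odd. Your explicit $2$-adic phrasing and the remark on the degenerate cases ($E_n(k)=0$ for $|k|\le n-1$, symmetry in $x$) only make the paper's one-line argument more careful.
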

\begin{proof}
	Put $n=2N+1$ and $k=2K$ for some integers $N,K$. Then
	\[
	E_{2N+1}(2K) =\frac{2K}{2N+1} \binom{2K+2N}{4N+1} =\frac{2K(2K+2N)}{(2N+1)(4N+1)} \binom{2K+2N-1}{4N}\equiv 0 \pmod{4}
	\]
	as the numerator contains a factor 4 while the denominator contains only odd factors.
\end{proof}

\begin{proof}[Proof of {\protect\hyperref[moddb]{Theorem~\ref*{moddb}}}]\label{proofb}
	Note first that for $a,b,c,d \in \{0,1,2,3\}$ the function
	\[
	F(x) = a 
	+ b \,(\mathbf{1}_{4\mZ+1}(x) + \mathbf{1}_{4\mZ+3}(x)) + c \,\mathbf{1}_{4\mZ+2}(x)+d\, \mathbf{1}_{4\mZ+3}(x)
	\]
	is 4-periodic with its first four values given by
	\[
	F(0)=a,\quad F(1)=a+b,\quad F(2)= a + c,\quad F(3)= a + b +d.
	\]
	By specifying $a,b,c,d\in \{0,1,2,3\}$ one can clearly obtain every possible combination of first four values modulo 4.
	
	The first identity of \hyperref[moddb]{Theorem~\ref*{moddb}} follows easily from the fact that
	$
	E_1(x) = x^2,
	$ 
	as every $x\in\mZ_{\geq0}$ is equal to either $2k$ or $2k+1$ for some $k\in \mZ_{\geq0}$, so
	\[
	(2k)^2 = 4k^2  \equiv  0  \pmod{4}, \quad\text{while}\quad
	(2k+1)^2 = 4k^2 + 4k +1 \equiv  1  \pmod{4} .
	\]
	
	For the second identity, by \eqref{Enx} we have 
	\[
	E_2(x) + 2\, E_3(x) =\frac{1}{12}x^2(x^2-1)+ \frac{1}{180}x^2(x^2-1)(x^2-4)=\frac{x^2(x^2-1)(x^2+11)}{4\cdot 45}.
	\]
	Now for $k \in \mZ_{\geq0}$, we find
	\[
	E_2(2k+1) + 2\, E_3(2k+1) =\frac{(4k^2+4k+1)(4k^2+4k)(4k^2+4k+12)}{4\cdot 45} \equiv 0 \pmod{ 4}.
	\]
	Moreover, again for $k \in \mZ_{\geq0}$, it holds that
	\[
	E_2(2k) + 2\, E_3(2k) =\frac{4k^2(4k^2-1)(4k^2+11)}{4\cdot 45} \equiv k^2 \pmod{ 4}.
	\]
	We already know that $k^2$ is congruent 1 modulo 4 when $k$ is odd, which corresponds to $2k \equiv 2 \pmod{4}$, and congruent 0 modulo 4 when $k$ is even, corresponding to $2k \equiv 0 \pmod{4}$.
	
	For the last identity, we will show that 
	\[
	\cE_{0001}(x) \equiv 
	\mathbf{1}_{4\mZ+3}(x)  \pmod{ 4} .
	\]
	Hereto we write
	\[
	\cE_{0001}(x) = \sum_{n=1}^{\infty}T_n(x) \qquad \text{with}\qquad T_n(x) \defeq  E_{2^n+1}(x) + \sum_{j=1}^{n-1} 2\, E_{2^n+1+2^j}(x).
	\]
	By \hyperref[oddE]{Lemma~\ref*{oddE}} we immediately have that for $x$ an even integer each term in $T_n(x)$ is congruent 0 modulo 4. 
	What remains to be shown is that for $k$ an integer one has $\cE_{0001}(4k+1) \equiv 0\pmod{ 4}$ and $\cE_{0001}(4k+3) \equiv 1 \pmod{ 4}$. 
	Hereto we look at the values of the functions $E_{2^n+1}$ and $ 2\, E_{2^n+1+2^j}$ that constitute $T_n$ and in this way generate the values of $T_n$ and ultimately those of $\cE_{0001}$.
	
	Recall that 
	\begin{equation*} 
	E_n(x) 
	=   \frac{\prod_{\ell=0}^{n-1} (x^2-\ell^2)}{\prod_{\ell=0}^{n-1} (n^2-\ell^2)}.
	\end{equation*}
	We are dealing with functions that consist of a fraction with integer numerator and denominator which evaluates to an integer. 
	To determine the value of this integer up to modulo 4 congruence, we will look at the numerator and the denominator separately and factorize them into a power of 2 and a remaining odd part. 
	For instance, if $
	{a}/{b}$ is such a fraction and we have the following factorization $a = 2^k a'$ and $b=2^{\ell}  b'$ with $a'$ and $b'$ odd integers, then 
	\[
	\frac{a}{b}\! \mod{4}\, \equiv 
	\begin{cases}
	\quad 0 & \text{if }k-\ell\geq 2\\
	\quad  2  &\text{if } k-\ell= 1\\
	{a'}\cdot ({b'})^{-1}  &\text{if } k=\ell.
	\end{cases} 
	\]
	Note that $k\geq\ell$ must hold as $2^{-1}$ is not defined modulo 4 and we know that $a/b$ must evaluate to an integer. Moreover, modulo 4 we have for odd integers $(1)^{-1}\equiv 1\pmod{4}$ and $(3)^{-1}\equiv 3\pmod{4}$. 
	
	We first look at the denominator of the function $E_{2^n+1}(x) $ for $n\geq 1$ which is given by 
	\begin{equation}
	\label{prod}
	\prod_{\ell=0}^{2^{n}} \bigl(   (2^n+1)^2   -  \ell^2\bigr) 
	\equiv  \prod_{j=0}^{2^{n-1}-1} \bigl(   (2^n+1)^2   -  (2j+1)^2\bigr)\pmod{4}.
	\end{equation}
	Here we 
	carried out a first simplification where we removed a number of factors congruent 1 modulo 4 as they have no impact on the final value modulo 4 of the product. 
	Indeed, for $\ell$ even, say $\ell=2j$, we have $(2^n+1)^2   -  (2j)^2 = 4n^2+4n+1-4j^2\equiv  1 \pmod{ 4} $. 
	If $n=1$, \eqref{prod} reduces to just one factor, yielding 
	\[
	\prod_{j=0}^{0} \bigl(   (2+1)^2   -  (2j+1)^2\bigr) =   \bigl(   3^2   - 1^2\bigr) =   2^3
	\]
	for the denominator of $E_3(x)$. For $n\geq 2$ we work out \eqref{prod} as follows
	\begin{align*}
	\prod_{j=0}^{2^{n-1}-1} \bigl(   (2^n+1)^2   -  (2j+1)^2\bigr) 
	& =  \prod_{j=0}^{2^{n-1}-1} \bigl(   2^n  +1- 2j-1\bigr) \bigl(   2^n  +1+  2j+1\bigr)  \\
	&   =  \prod_{j=0}^{2^{n-1}-1} \bigl(   2^n  - 2j\bigr) \bigl(   2^n  +  2j+2\bigr)  \\ &   =      (2^{n+1})!! \\ & = 2^{(2^n)} (2^n)!
	\end{align*}
	Using repeatedly the recursive relation (for $n\geq 1$)
	\[
	(2^n)! = 2^{(2^{n-1})} (2^n-1)!! \, (2^{n-1})!
	\]
	we find
	\begin{align*}
	2^{(2^n)} (2^n)! &= 2^{(2^n)} \prod_{j=1}^{n} 2^{(2^{j-1})} (2^j-1)!! \\
	& = 2^{( 2^{n+1} - 1)}  \prod_{j=1}^{n} (2^j-1)!! 
	\end{align*}
	Now, the double factorial $(2^j-1)!!$ consists of a product of $2^{j-1}$ odd consecutive integers. 
	For $j = 1$ this product consists of just one factor, namely 1. The product of two odd consecutive integers is always congruent 3 modulo 4, hence $j = 2$ contributes a factor 3, while for $\ell\geq 3$ this double factorial is thus necessarily congruent 1 modulo 4. 
	In this way, for the denominator of $E_{2^n+1}(x) $ we arrive at 
	\begin{equation}\label{denom}
	2^{ 2^{n+1} - 1} \prod_{\ell=0}^{n-1} \prod_{j=0}^{2^{\ell}-1} (2j+1)   \equiv  2^{ (2^{n+1} - 1)}   \cdot 3\pmod{4}   . 
	\end{equation}
	
	Using this information we will show that for integers $k$ and $x$
	\[
	\qquad E_{2^n+1}(2^nk+2x+1)\equiv \frac{k(k+1)}{2}   \pmod{ 4} \qquad (0\leq x < 2^{n-1}).
	\]
	For $n=1$ we immediately have 
	\[
	E_3(2k+1) \equiv  \frac{1}{2^3}\prod_{j=0}^{2} \bigl(   (2k+1)^2   -  j^2\bigr) \equiv \frac{4k^2+4k+1-1}{2^3}\equiv \frac{k(k+1)}{2}   \pmod{ 4},
	\]
	while for $n\geq2$ we find for the numerator of $E_{2^n+1}(2^nk+2x+1) $: 
	\begin{align*}
	\prod_{j=0}^{2^{n}} \bigl(   (2^nk+2x+1)^2   -  j^2\bigr) & \equiv 
	\prod_{j=0}^{2^{n-1}-1} \bigl(   (2^nk+2x+1)^2   -  (2j+1)^2\bigr)  \pmod{ 4}\\
	& = \prod_{j=0}^{2^{n-1}-1}\bigl(   2^nk +2x - 2j\bigr)  \bigl(   2^nk +2x +  2j+2\bigr)   \\
	& = \prod_{j=1}^{2^{n}}\bigl(   2^n(k-1) +2x  +  2j\bigr) .
	\end{align*}
	Here we have a product of $2^n$ even consecutive integers starting at $ 2^n(k-1) +2x+2$. For $x=0$ this product starts at $ 2^n(k-1) +2$ and goes up to $2^n(k-1) +2^{n+1}$. 
	Compared to the case $x=0$, a shift occurs when $x> 0$ where for $j$ from 1 up to $x$ 
	the factor
	$
	2^n(k-1) + 2j 
	$ in this product gets replaced by 
	$
	2^n(k-1) + 2j +2^{n+1}
	$. 
	Now, our goal is to factor out all powers of 2 and look at the remaining odd part modulo 4. 
	As long as $2j<2^{n}$, or thus $x<2^{n-1}$, we see that $2^n(k-1) + 2j $ contains at most a power of 2 equal to $2^{n-1}$. 
	Hence, when factoring out all powers of 2 the added term $2^{n+1}$ 
	in the replacement factor $2^n(k-1) + 2j +2^{n+1}$ can at most reduce to $2^2$. This means that the odd part remaining after factoring out all powers of 2 of 
	the replacement factor and the original factor give the same value modulo 4. 
	Note that this no longer holds for $x=2^{n-1}$ as one can then factor out $2^n$.

	For $0\leq x < 2^{n-1}$ we have
	\[ 
	\prod_{j=1}^{2^{n}}(   2^n(k  -1)   +2x  +  2j\bigr)   
	\equiv \prod_{j=1}^{2^{n}}(   2^n(k  -1)     +  2j\bigr)   \pmod{ 4} 
	\]
	Using repeatedly the recursive relation (for $n\geq 1$)
	\[
	\prod_{j=1}^{2^{n}}(   2^n(k  -1)     +  2j\bigr)   
	= 2^{(2^n)} \prod_{j=1}^{2^{n-1}}(   2^{n-1}(k  -1)     +  2j-1\bigr)  \prod_{j=1}^{2^{n-1}}(   2^{n-1}(k  -1)     +  2j\bigr)   
	\]
	we find
	\begin{align*} 
	\prod_{j=1}^{2^{n}}(   2^n(k  -1)     +  2j\bigr) 
	& =(k+1)
	\prod_{\ell=1}^{n}  2^{(2^{\ell})} \prod_{j=1}^{2^{\ell-1}} \bigl( 2^{\ell-1}(k-1)+ 2j-1\bigr)   \\
	& =(k+1) 2^{( 2^{n+1}-2)} \prod_{\ell=1}^{n}  \prod_{j=1}^{2^{\ell-1}} \bigl( 2^{\ell-1}(k-1)+ 2j-1\bigr)  .
	\end{align*}
	Now, for $\ell=1$ the inner most product reduces to one factor $2^0(k-1)+1=k$, while for $\ell=2$ we have $\bigl( 2(k-1)+1\bigr)  \bigl( 2(k-1)+ 3\bigr) \equiv 3 \pmod{4}$. 
	For all other values of $\ell$ we have a product of $2^{\ell-1}$ odd consecutive integers which is always congruent 1 modulo 4. 
	In this way we arrive at
	\[
	2^{ (2^{n+1} - 2)}   \cdot 3 \cdot k (k+1)
	\]
	for the numerator of $E_{2^n+1}(2^nk+2x+1) $.
	Together with what we obtained for the denominator of $E_{2^n+1}(x) $ in \eqref{denom} we ultimately find
	\[
	\qquad E_{2^n+1}(2^nk+2x+1)\equiv \frac{k(k+1)}{2}   \pmod{ 4} \qquad \bigl(0\leq x < 2^{n-1}\bigr) .
	\]
	
	What this means is that when looking at the values of $E_{2^n+1}$ modulo 4 evaluated at the odd integers, starting at 1 we have  $2^{n-1}$ times the value 0, followed by $2^{n-1}$ times the value 1, then $2^{n-1}$ times the value 3 and $2^{n-1}$ times the value 2 after which the sequence mirrors and repeats due to the periodicity result in \hyperref[periodE]{Corollary~\ref*{periodE}} and the fact that $E_n(-x)=E_n(x)$. 
	This is illustrated in Table~\ref{tableE2n1}.
	
	Using the same techniques as above one shows that
	\[
	2E_{2^n+1+2^j}\bigl(2^n+2^{n+1}k+2x+2^jy+1\bigr)\equiv {(k+1)^2}y(y+1)   \pmod{ 4},
	\]
	for $0\leq x <2^{j-1}$ and $  0\leq y < 2^{n-j+1}$. 
	Note that because of the added factor 2 in front, it suffices to count the powers of 2 and we do not need to take into account the congruence class modulo 4 of the remaining odd part when decomposing the numerator and the denominator. 
	This result translates to the evaluation at odd integers as follows:  starting at 1, for $ 2E_{2^n+1+2^j}$ we have $2^{n-1}$ times the value 0, followed by 
	$2^{n-j+1}$ times a sequence consisting of $2^{j-1}$ values 0, $2^{j}$ values 2 and again $2^{j-1}$ values 0. This is followed by $2^{n-1}$ times the value 0 and the other values follow from \hyperref[periodE]{Corollary~\ref*{periodE}}.
	
	Together with what we obtained for $E_{2^n+1}$, this gives the following sequence of values for $T_{n}$ with $n\geq 2$ evaluated at the integers $4k+1$ starting at 1 ($k=0$): we have $2^{n-2}$ times the value 0, followed by $2^{n-1}$ times the value 1, $2^{n-1}$ times the value 2, $2^{n-1}$ times the value 3 and finally $2^{n-2}$ times the value 0, after which the sequence repeats due to the periodicity result in \hyperref[periodE]{Corollary~\ref*{periodE}}. 
	This is illustrated in Table~\ref{tableTn1}. A similar result holds for the values of $T_{n}$ evaluated at the integers $4k+3$ which is illustrated in table~\ref{tableTn3}. 
	More formally, 
	we can also write this as (for $n\geq 2$) 
	\begin{equation}
	\begin{split}\label{Tn413}
	T_n(2^nk+4x+1) &\equiv  \frac{k(k+1)}{2} + \frac{k^2(k^2-1)}{2}  \pmod{ 4} \qquad (0\leq x < 2^{n-2})\\
	T_n(2^nk+4x+3) &\equiv \frac{k(k-1)}{2} + \frac{k^2(k^2-1)}{2}  \pmod{ 4} \qquad (0\leq x < 2^{n-2}).
	\end{split}
	\end{equation}
	
	Putting
	\[
	\cE^N_{0001}(x) \defeq \sum_{n=1}^{N} T_n (x),
	\]
	the function $\cE^N_{0001}$ has periodicity $2^{N+3}$ due to \hyperref[periodE]{Corollary~\ref*{periodE}}. 
	When evaluated at integers congruent 1 modulo 4 it thus suffices to know the first $2^{N+1}$ values. 
	We now show that the values $\cE^N_{0001}(4k+1)$ for $k$ from 0 to $2^{N+1}$ are $2^{N-1}$ times 0, followed by $2^{N-1}$ times 3, followed by $2^{N-1}$ times 2 and finally $2^{N-1}$ times 1. 
	
	This obviously holds for the case $N=1$, as then we have 
	$
	T_1= E_3$ which evaluated at $4k+1$ gives the sequence 0, 3, 2, 1 repeated indefinitely. 
	Next, we assume this holds for $\cE^N_{0001}(4k+1)$ and we show that it also holds for $\cE^{N+1}_{0001}(4k+1)$.
	We have by definition
	\[
	\cE^{N+1}_{0001}(x) = T_{N+1} (x)+ \sum_{n=1}^{N} T_n (x).
	\]
	Due to \hyperref[periodE]{Corollary~\ref*{periodE}}, it suffices to look at $T_{N+1}$ evaluated at the values $4k+1$ for $k$ from 0 to $2^{N+2}$, which we found in \eqref{Tn413}. 
	The addition of these values with the assumed values of $\cE^N_{0001}$ now give the desired values of $\cE^{N+1}_{0001}$ modulo 4. This is  illustrated in Table~\ref{tableEN1}.
	
	Hence, for an integer congruent 1 modulo 4, say $4k+1$, we thus find that $\cE^{N}_{0001}(4k+1) \equiv 0 \pmod{4} $ for $N$ such that $4k+1 \leq 2^{N-1}$.
	As $\cE_{0001} =  \lim_{N\to \infty}\cE^{N}_{0001}$, we have $\cE_{0001}(4k+1) \equiv 0 \pmod{ 4}$ for every integer $k$.
	In exactly the same manner one finds that 
	$\cE_{0001}(4k+3) \equiv 1 \pmod{ 4}$.
	
	\begin{table}[!htbp]
		\[
		\begin{array}{c|*{17}c}
		2k+1& 1&3&5 &7&9&11&13&15&17&19&21&23&25&27&29&
		{31}&33\\ \hline 
		E_3 & 0 & 1 & 3& 2 & 2 &3 & 1&0&0&1&3&2&2&3&1&0&0\\
		E_5 & 0&0&1&1&3&3&2&2&2&2&3&3&1&1&0&0&0\\
		E_9& 0&0&0&0&1&1&1&1&3&3&3&3&2&2&2&2&2\\
		E_{17} & 0 &0 &0 &0 &0 &0 &0 & 0 &1&1&1&1&1&1&1&1&3\\
		E_{31} & 0 &0 &0 &0 &0 &0 &0 & 0 &0&0&0&0&0&0&0&0&1
		\end{array}
		\]
		\caption
		{The values (modulo 4) of the polynomials $E_3,E_5\dotsc,E_{31}$ evaluated on the odd integers 1 to 33.}
		\label{tableE2n1}
	\end{table}
	
	\begin{table}[!htbp]
		\[
		\begin{array}{c|*{17}c}
		4k+1& 1&5 &9&13&17&21&25&29 & 33 &37&41&45&49&53&57&
		{61}&65\\ \hline 
		T_1 & 0 & 3 & 2 & 1 & 0 &3 & 2&1&0&3&2&1&0&3&2&1&0\\
		T_2 & 0&1&1&2&2&3&3&0&0&1&1&2&2&3&3&0&0\\
		T_3& 0&0&1&1&1&1&2&2&2&2&3&3&3&3&0&0&0\\
		T_4 & 0&0&0&0&1&1&1&1&1&1&1&1&2&2&2&2&2\\
		T_5 & 0 &0 &0 &0 &0 &0 &0 & 0 &1&1&1&1&1&1&1&1&1\\
		T_6 & 0 &0 &0 &0 &0 &0 &0 & 0 &0&0&0&0&0&0&0&0&1
		\end{array}
		\]
		\caption
		{The values (modulo 4) of the polynomials $T_1,T_2\dotsc,T_6$ evaluated on the integers congruent 1 modulo 4, ranging from 1 to 65.}
		\label{tableTn1}
	\end{table}
	
	\begin{table}[!htbp]
		\[
		\begin{array}{c|*{17}c}
		4k+3& 3&7 &11&15&19&23&27&31 & 35 &39&43&47&51&55&59&
		{63}&67\\ \hline 
		T_1 & 1 & 2 & 3 & 0 & 1 &2 & 3&0&1&2&3&0&1&2&3&0&0\\
		T_2 & 0&3&3&2&2&1&1&0&0&3&3&2&2&1&1&0&0\\
		T_3& 0&0&3&3&3&3&2&2&2&2&1&1&1&1&0&0&0\\
		T_4 & 0&0&0&0&3&3&3&3&3&3&3&3&2&2&2&2&2\\
		T_5 & 0 &0 &0 &0 &0 &0 &0 & 0 &3&3&3&3&3&3&3&3&3\\
		T_6 & 0 &0 &0 &0 &0 &0 &0 & 0 &0&0&0&0&0&0&0&0&3
		\end{array}
		\]
		\caption
		{The values (modulo 4) of the polynomials $T_1,T_2\dotsc,T_6$ evaluated on the integers congruent 3 modulo 4, ranging from 3 to 67.}
		\label{tableTn3}
	\end{table}
	\begin{table}[!htbp]
		\[
		\begin{array}{c*{16}c}
		&\overbrace{\rule{1.0cm}{0pt}}^{2^{N-1}} &\overbrace{\rule{1.0cm}{0pt}}^{2^{N-1}} & \overbrace{\rule{1.0cm}{0pt}}^{2^{N-1}} & \overbrace{\rule{1.0cm}{0pt}}^{2^{N-1}} & \overbrace{\rule{1.0cm}{0pt}}^{2^{N-1}} & \overbrace{\rule{1.0cm}{0pt}}^{2^{N-1}} &  \overbrace{\rule{1.0cm}{0pt}}^{2^{N-1}} & \overbrace{\rule{1.0cm}{0pt}}^{2^{N-1}} \\[-3pt]
		\cE^{N}_{0001}(4k+1) & 0  \dots  0 & 3  \dots  3 & 2  \dots  2& 1  \dots  1& 0  \dots  0 & 3  \dots  3 & 2  \dots  2& 1  \dots  1 \\
		T_{N+1}(4k+1) & 0  \dots  0 & 1  \dots  1 & 1  \dots  1& 2  \dots  2& 2  \dots  2  & 3  \dots  3 & 3  \dots  3& 0  \dots  0\\ \hline
		\cE^{N+1}_{0001}(4k+1) & 0  \dots  0 & 0  \dots  0 & 3  \dots  3& 3  \dots  3& 2  \dots  2& 2  \dots  2& 1  \dots  1& 1  \dots  1\\[-9pt]
		& \multicolumn{2}{c}{\underbrace{\rule{2.4cm}{0pt}}_{2^{N}}} & \multicolumn{2}{c}{\underbrace{\rule{2.4cm}{0pt}}_{2^{N}}} & \multicolumn{2}{c}{\underbrace{\rule{2.4cm}{0pt}}_{2^{N}}} & \multicolumn{2}{c}{\underbrace{\rule{2.4cm}{0pt}}_{2^{N}}}
		\end{array}
		\]
		\caption
		{The values (modulo 4) of $\cE^{N+1}_{0001}$ evaluated on the first $2^{N+2}$ integers congruent 1 modulo 4.}
		\label{tableEN1}
	\end{table}
\end{proof}

The proof of {\hyperref[moddc]{Theorem~\ref*{moddc}}} is analogous to the one of \hyperref[moddb]{Theorem~\ref*{moddb}}, now using the following periodicity property of the polynomials $D_n$.

\begin{corollary}\label{periodD}
	For natural numbers $n,N\in \mZ_{\geq0}$ such that $n \leq 2^N$, and $x\in \mZ_{\geq0}$, one has
	\[
	D_{n}\big(x\!+\!\tfrac12\big) \equiv D_n\big(x\!+\!\tfrac12+2^{N+2}\big) \pmod{ 4},
	\]
	and
	\[
	2\, D_{n}\big(x\!+\!\tfrac12\big) \equiv 2\, D_n\big(x\!+\!\tfrac12+2^{N+1}\big) \pmod{ 4},
	\]
\end{corollary}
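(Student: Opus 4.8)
The plan is to reduce both congruences to the periodicity of ordinary binomial coefficients established in Proposition~\ref{period}. The key observation is that $D_n$ evaluated at a half-integer is itself a single binomial coefficient: the identity $D_n\big(k+\tfrac12\big)=\binom{k+n}{2n}$, recorded just after the definition \eqref{Dnx}, turns every half-integer value of $D_n$ into an integer binomial coefficient of lower index $2n$. Setting $x'=x+n$, the two claims become
\[
\binom{x'}{2n}\equiv\binom{x'+2^{N+2}}{2n}\pmod 4
\qquad\text{and}\qquad
2\binom{x'}{2n}\equiv 2\binom{x'+2^{N+1}}{2n}\pmod 4 ,
\]
so the half-integer shifts $x\mapsto x+2^{N+2}$ and $x\mapsto x+2^{N+1}$ become ordinary integer shifts of the upper argument of a binomial coefficient. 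This is the exact analogue of the first step in the proof of Corollary~\ref{periodE}, where $E_n$ was first rewritten through binomial coefficients.

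For the first congruence, the hypothesis $n\le 2^N$ gives $2n\le 2^{N+1}$, and Proposition~\ref{period} applied to the lower index $2n$ yields periodicity under the shift $2^{N+2}$. For the second congruence the extra factor $2$ means one only needs periodicity modulo $2$; this relaxes the required surplus of $2$-adic valuation by one power of two and so permits the shorter shift $2^{N+1}$, which is precisely why the two statements carry different periods. Concretely, I would run the $2$-adic valuation count of Proposition~\ref{period} directly on the product form
\[
D_n\big(x+\tfrac12\big)=\frac{1}{(2n)!}\prod_{\ell=0}^{2n-1}\big(x+n-\ell\big),
\]
expanding $\prod_{\ell}\big(x+2^{N+2}+n-\ell\big)$ and checking that every term carrying a factor $2^{N+2}$ retains, after division by the $2$-part of $(2n)!$, at least two surplus factors of $2$ and hence vanishes modulo $4$; in the factor-$2$ variant a single surplus factor suffices, giving the shorter period. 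This is verbatim the counting of Proposition~\ref{period}, now with the consecutive integers $x+n-\ell$ in place of $x-k$.

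The hardest part is the $2$-adic bookkeeping at the extreme index $n=2^N$, where $2n=2^{N+1}$ sits at the boundary of the range accessible to Proposition~\ref{period}: there one must account precisely for the power of $2$ dividing the numerator against that dividing $(2n)!$ to be sure the surplus-of-two (respectively surplus-of-one) is not lost. Away from this boundary the even lower index $2n$ poses no new difficulty beyond the odd-index case already treated, and the argument runs in complete parallel with the proof of Corollary~\ref{periodE}.
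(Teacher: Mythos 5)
Your reduction is exactly the paper's: writing $D_n\big(x+\tfrac12\big)=\binom{x+n}{2n}=\frac{1}{(2n)!}\prod_{k=0}^{2n-1}(x-n+1+k)$ and running the $2$-adic count of Proposition~\ref{period} on a product of $2n$ consecutive integers is precisely what the paper's one-line proof does. For $n<2^N$ this is complete: then $2n<2^{N+1}$, Proposition~\ref{period} applies with lower index $2n$ and gives the period $2^{N+2}$ modulo $4$, and the mod-$2$ variant of the same count gives the period $2^{N+1}$ for the doubled polynomial.

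However, the boundary case $n=2^N$, which you explicitly flag as ``the hardest part'' and then leave unresolved, is a genuine gap that cannot be closed: there $2n=2^{N+1}$ falls outside the strict inequality required by Proposition~\ref{period}, and the surplus power of $2$ is in fact lost. Concretely, take $n=1$, $N=0$: $D_1\big(\tfrac12\big)=\binom{1}{2}=0$ while $D_1\big(4+\tfrac12\big)=\binom{5}{2}=10\equiv 2\pmod 4$, so the first congruence fails; likewise $2D_1\big(\tfrac12\big)=0$ while $2D_1\big(2+\tfrac12\big)=2\cdot\binom{3}{2}=6\equiv 2\pmod 4$, so the second fails too. The paper's own Table~2 already exhibits this: $D_1$ has period $8$ modulo $4$ (not $4$) and $D_2$ has period $16$ (not $8$). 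The reason the analogous boundary case survives in Corollary~\ref{periodE} is the extra factor $2$ in the definition \eqref{Enx} of $E_n$, which contributes one additional power of $2$ to the numerator; $D_n$ has no such factor (compare \eqref{Dnx}). So your proof (and, for that matter, the paper's) only establishes the corollary under the hypothesis $n<2^N$; at $n=2^N$ the statement itself is false, and the correct fix is to strengthen the hypothesis to the strict inequality, matching Proposition~\ref{period}. This does not affect the applications, where one is always free to enlarge $N$.
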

\begin{proof}
	Writing \eqref{Dnx} as
	\begin{equation*}
	D_n\big(x\!+\!\tfrac12\big) =  \frac{1}{(2n)!} \prod_{k=0}^{n-1} \big((x\!+\!\tfrac12)^2-(k\!+\!\tfrac12)^2\big)=  \frac{1}{(2n)!} \prod_{k=0}^{2n-1} (x-n+1+k),
	\end{equation*}
	the periodicity follows by the same reasoning as used in the proof of \hyperref[period]{Proposition~\ref*{period}}.
\end{proof}

\subsection{Clifford analysis}
\label{secA3}

To show \hyperref[intrC]{Proposition~\ref*{intrC}} we first give some auxiliary results.
For these results, let $T$ be an integral transform 
\[
T \lbrack f(x) \rbrack (y) = \frac{1}{(2\pi)^{m/2}} \int_{\mathbb{R}^m}\! K(x,y) \, f(x) \, \mathrm{ d}x
\]
with kernel given by
\begin{equation*}
K(x,y) = A(w,z) +  (\ux \wedge \uy) \  B(w,z)
\end{equation*}
where, for $\alpha_{k}, \beta_{k} \in \mC$,
\begin{align*}
A(w,z) &=  {2}^{\lambda}\, \Gamma ( \lambda )    \sum_{k=0}^{+\infty} (k+\lambda)\, \alpha_{k} \ z^{-\lambda}J_{k+\lambda}( z) C^{\lambda}_{k}(w)\\
B(w,z) &=  2^{\lambda+1}\, \Gamma ( \lambda +1 )    \sum_{k=1}^{+\infty}  (k+\lambda )\, \beta_{k} \ z^{-\lambda-1}J_{k+\lambda}( z) C^{\lambda+1}_{k-1}(w).
\end{align*}
The eigenvalues of $T$ can be determined using the following proposition, which is a generalization of Bochner's formulas for the classical Fourier transform.

\begin{proposition}
	\label{Bochner}
	Let $M_{k} \in \cM_{k}$ be a spherical monogenic of degree $k$. Let $f(x)= f_0(|x|)$ be a real-valued radial function in 
	$\cS(\mR^m)$. Further, put $\underline{\xi}= \ux/|x|$, $\underline{\eta} = \uy/|y|$ and $r = |x|$. Then one has
	\[
	T\lbrack f(x)M_{k}(x) \rbrack (y) =   
	\left(\alpha_{k} - k\, \beta_k \right)  M_{k}({\eta}) \int_{0}^{+\infty} r^{m+k-1}f_0(r)   z^{-\lambda} J_{k + \lambda}(z)  \, \mathrm{d}r
	\]
	and
	\[
	T \lbrack f(x) \ux M_{k}(x)\rbrack (y) =
	(  \alpha_{k+1} + (k+m-1) \beta_{k+1} )  \underline{\eta}  M_{k}({\eta})\int_{0}^{+\infty} r^{m+k}f_0(r)   z^{-\lambda} J_{k +1+ \lambda}(z)  \, \mathrm{d}r
	\]
	with $z= r |y|$ and $\lambda= (m-2)/2$.
\end{proposition}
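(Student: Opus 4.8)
The plan is to pass to polar coordinates and decouple the radial and angular integrations, then dispatch the scalar part of the kernel with the reproducing-kernel identity \eqref{reprod} and the bivector part with a Clifford analogue of the Funk--Hecke theorem. Writing $x = r\underline{\xi}$ with $r=|x|$ and $\underline{\xi}\in\mathbb{S}^{m-1}$, and using the homogeneity $M_k(r\underline{\xi})=r^k M_k(\underline{\xi})$ together with $\ux\wedge\uy = z\,(\underline{\xi}\wedge\underline{\eta})$, $w=\langle\underline{\xi},\underline{\eta}\rangle$ and $z=r|y|$, the integral $T\lbrack f(x)M_k(x)\rbrack(y)$ becomes $\frac{1}{(2\pi)^{m/2}}\int_0^{+\infty} r^{m+k-1}f_0(r)\,\big(\text{angular integral over }\mathbb{S}^{m-1}\big)\,\mathrm{d}r$. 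Here the $\underline{\xi}$-integration acts only on the Gegenbauer polynomials and on the bivector $\underline{\xi}\wedge\underline{\eta}$, while the Bessel factors $z^{-\lambda}J_{n+\lambda}(z)$ are carried along as $r$-dependent coefficients of the series; once the angular step selects a single summation index $n$, the corresponding Bessel factor can be pulled outside, leaving exactly a radial integral $\int_0^{+\infty} r^{m+k-1}f_0(r)\,z^{-\lambda}J_{k+\lambda}(z)\,\mathrm{d}r$. Everything then reduces to evaluating two angular integrals.

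For the scalar part $A(w,z)$, since every spherical monogenic is a spherical harmonic ($\mathcal{M}_k\subset\mathcal{H}_k$), I would apply \eqref{reprod} termwise: it annihilates every $n\neq k$ and reproduces $M_k(\underline{\eta})$, so that $\int_{\mathbb{S}^{m-1}}A(w,z)M_k(\underline{\xi})\,\mathrm{d}\underline{\xi}=(2\pi)^{m/2}\alpha_k\,z^{-\lambda}J_{k+\lambda}(z)\,M_k(\underline{\eta})$. This already yields the $\alpha_k$-contribution and fixes the Bessel order to $k+\lambda$.

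The main obstacle is the bivector part, namely evaluating $\mathcal{I}_n\defeq\int_{\mathbb{S}^{m-1}}(\underline{\xi}\wedge\underline{\eta})\,C^{\lambda+1}_{n-1}(w)\,M_k(\underline{\xi})\,\mathrm{d}\underline{\xi}$ and showing it is a scalar multiple of $M_k(\underline{\eta})$ that is nonzero only for $n=k$. The key structural input is that $\ux M_k$ is harmonic of degree $k+1$: from $\{\ux,\upx\}=-2\mE_x-m$ and $\upx M_k=0$ one gets $\upx(\ux M_k)=-(2k+m)M_k$, hence $\Delta_x(\ux M_k)=-\upx^{2}(\ux M_k)=0$, so $\underline{\xi}M_k(\underline{\xi})$ restricts to a degree-$(k+1)$ spherical harmonic. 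Writing $\underline{\xi}\wedge\underline{\eta}=\underline{\xi}\,\underline{\eta}+w$ and using \eqref{Geg0} in the form $C^{\lambda+1}_{n-1}=(2\lambda)^{-1}\tfrac{\mathrm{d}}{\mathrm{d}w}C_n^{\lambda}$ together with the three-term recurrence for the Gegenbauer polynomials, I would reduce $\mathcal{I}_n$ to the scalar reproducing kernel of \eqref{reprod}, matching the degree-$k$ content of $M_k$ against the degree-$(k\pm1)$ content produced by the extra vector factor. A shorter route to the same conclusion is to invoke $\mathrm{Spin}(m)$-equivariance and Schur's lemma to know a priori that $\mathcal{I}_n=c_{n,k}M_k(\underline{\eta})$, and then to fix the single relevant constant by testing on an explicit monogenic with $\underline{\eta}$ at the north pole; the computation gives $c_{n,k}=0$ for $n\neq k$ and, after combining with the scalar part, the net combination $(\alpha_k-k\,\beta_k)$, which proves the first formula.

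The second formula follows from the same machinery applied to the harmonic $\ux M_k$ of degree $k+1$, whose restriction to the sphere is $\underline{\xi}M_k(\underline{\xi})$: the scalar part now selects $n=k+1$ and reproduces $\underline{\eta}M_k(\underline{\eta})$ with Bessel order $k+1+\lambda$, while the analogous bivector computation supplies the coefficient $k+m-1$, giving $(\alpha_{k+1}+(k+m-1)\beta_{k+1})\,\underline{\eta}\,M_k(\underline{\eta})$. The only genuinely delicate points are the bookkeeping between the dimension-$m$ polynomials $C_n^{\lambda}$ appearing in $A$ and the dimension-$(m+2)$ polynomials $C^{\lambda+1}_{n-1}$ appearing in $B$, and the extraction of the two constants $-k$ and $k+m-1$; the remaining steps are the separation of variables and termwise integration, justified by the rapid decay of $f_0\in\mathcal{S}(\mR^m)$.
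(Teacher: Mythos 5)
Your outline follows essentially the same route as the paper's own (one-line) proof, which defers to the reproducing-kernel identity \eqref{reprod} and to the proof of Theorem 6.4 in \cite{DBXu}: pass to polar coordinates, integrate the scalar part $A$ termwise against $M_k$ using \eqref{reprod}, and handle the bivector part by exploiting that $\ux M_k$ is a degree-$(k+1)$ harmonic together with Gegenbauer identities. So the strategy is correct and matches the source.

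Two points keep your write-up from being a complete argument. First, the entire quantitative content of the proposition is the pair of constants $-k$ and $k+m-1$, and you assert rather than derive them: to extract them one must re-expand $C^{\lambda+1}_{n-1}(w)$ and $w\,C^{\lambda+1}_{n-1}(w)$ in the basis $\{C^{\lambda}_{j}\}$ (connection formula plus three-term recurrence), rewrite $\underline{\xi}\wedge\underline{\eta}=-\underline{\eta}\,\underline{\xi}-w$ so that $\underline{\eta}$ can be pulled out of the $\xi$-integral on the left, and then apply \eqref{reprod} separately to the degree-$(k+1)$ spherical harmonic $\underline{\xi}\,M_k(\underline{\xi})$ and to $M_k(\underline{\xi})$ itself; the surviving coefficients are precisely $-k$ and $k+m-1$. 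This is the computation carried out in \cite{DBXu}, and without it the proposition is not actually proved. Second, the proposed Schur's-lemma shortcut is shakier than you suggest: $\cM_k$ with values in the full Clifford algebra is not $\mathrm{Spin}(m)$-irreducible (it is a sum of copies of the spinor module), and the target of the map $M_k\mapsto\mathcal{I}_n$ contains, besides the span of $M_k(\underline{\eta})$, components of the form $\underline{\eta}$ times monogenics of neighbouring degrees; equivariance alone does not force these to vanish, so the claim that a single constant remains to be fixed ``at the north pole'' needs separate justification. The direct Gegenbauer computation is the safe path, and it is the one the paper implicitly relies on.
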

\begin{proof}
	The proof relies on \eqref{reprod} and goes along similar lines as the proof of Theorem 6.4 in \cite{DBXu}.
\end{proof}
We then have:
\begin{theorem}
	\label{eigenvalues}
	The functions $\{  \psi_{j,k,\ell}  \}$ are eigenfunctions of $T$.
	One has, putting $\beta_{0}=0$,
	\begin{align}\label{eigenvalue eq}
	\begin{split}
	(T \psi_{2p,k,\ell}) (y) & = 
	(-1)^{p}\left( \alpha_{k} - {k} \, \beta_{k} \right)\psi_{2p,k,\ell}(y)\\
	(T \psi_{2p+1,k,\ell}) (y) & = 
	(-1)^{p}\left(  \alpha_{k+1} + ({k+m-1}) \beta_{k+1}\right)\psi_{2p+1,k,\ell}(y).
	\end{split}
	\end{align}
\end{theorem}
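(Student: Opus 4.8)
The plan is to recognize each basis function as having exactly the shape handled by \hyperref[Bochner]{Proposition~\ref*{Bochner}}, apply that proposition to collapse the action of $T$ into a single radial integral, and then evaluate the remaining integral with the Laguerre--Bessel identity \eqref{lagbes}. Indeed, from \eqref{basisp} the even-indexed functions $\psi_{2p,k,\ell}(x) = 2^p p!\, L_p^{m/2+k-1}(|x|^2)\, M_k^{(\ell)}\, \e^{-|x|^2/2}$ are precisely of the form $f_0(|x|)\, M_k^{(\ell)}(x)$ with radial factor $f_0(r) = 2^p p!\, L_p^{k+\lambda}(r^2)\, \e^{-r^2/2}$ (using $m/2+k-1 = k+\lambda$), while the odd-indexed functions $\psi_{2p+1,k,\ell}(x) = 2^p p!\sqrt 2\, L_p^{m/2+k}(|x|^2)\, \ux\, M_k^{(\ell)}\, \e^{-|x|^2/2}$ are of the form $f_0(|x|)\, \ux\, M_k^{(\ell)}(x)$ with $f_0(r) = 2^p p!\sqrt 2\, L_p^{k+1+\lambda}(r^2)\, \e^{-r^2/2}$. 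Thus both families are directly covered by the two formulas of the Bochner-type proposition, and the substantive analytic content (convergence and the reduction to a one-dimensional integral) is already packaged there.

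First I would treat the even case. Applying the first formula of \hyperref[Bochner]{Proposition~\ref*{Bochner}} produces the scalar coefficient $\alpha_k - k\beta_k$ times $M_k^{(\ell)}(\eta)$ times the radial integral $\int_0^\infty r^{m+k-1} f_0(r)\, z^{-\lambda} J_{k+\lambda}(z)\, \mathrm{d}r$ with $z = r|y|$. Since $m+k-1 = 2\lambda+1+k$, this is exactly the left-hand side of \eqref{lagbes} with $j=p$ and $s=|y|$, so it evaluates to $(-1)^p\, 2^p p!\, |y|^k L_p^{k+\lambda}(|y|^2)\, \e^{-|y|^2/2}$. Invoking homogeneity of the monogenic, $|y|^k M_k^{(\ell)}(\eta) = M_k^{(\ell)}(y)$, the pieces reassemble into $(-1)^p(\alpha_k - k\beta_k)\,\psi_{2p,k,\ell}(y)$, as required.

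The odd case runs entirely parallel via the second Bochner formula, which yields $\alpha_{k+1} + (k+m-1)\beta_{k+1}$ times $\underline{\eta}\, M_k^{(\ell)}(\eta)$ times $\int_0^\infty r^{m+k} f_0(r)\, z^{-\lambda} J_{k+1+\lambda}(z)\, \mathrm{d}r$. The key bookkeeping point, and the only genuine subtlety, is that here one applies \eqref{lagbes} with $k$ replaced by $k+1$: one has $m+k = 2\lambda+1+(k+1)$, the Bessel order is $J_{(k+1)+\lambda}$, and the Laguerre index $k+1+\lambda$ of $f_0$ matches, so with $j=p$ the integral equals $(-1)^p\, 2^p p!\sqrt 2\, |y|^{k+1} L_p^{k+1+\lambda}(|y|^2)\, \e^{-|y|^2/2}$. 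Writing $\uy = |y|\,\underline{\eta}$ and using homogeneity gives $\underline{\eta}\,|y|^{k+1} M_k^{(\ell)}(\eta) = \uy\, M_k^{(\ell)}(y)$, which recovers $(-1)^p(\alpha_{k+1} + (k+m-1)\beta_{k+1})\,\psi_{2p+1,k,\ell}(y)$. The convention $\beta_0 = 0$ causes no difficulty, since in the even case $\beta_0$ only ever enters multiplied by $k=0$. The hard part is therefore not conceptual but purely the careful index matching between $\lambda = (m-2)/2$, the Laguerre superscripts, and the shifted Bessel orders just described.
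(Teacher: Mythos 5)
Your proposal is correct and follows essentially the same route as the paper: identify each $\psi_{j,k,\ell}$ as being of the form covered by Proposition~\ref{Bochner}, apply the corresponding Bochner formula, and evaluate the resulting radial integral with the Laguerre--Bessel identity \eqref{lagbes}. The index bookkeeping in your odd case (shifting $k\mapsto k+1$ in \eqref{lagbes}) is exactly the step the paper compresses into ``follows similarly,'' and you carry it out correctly.
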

\begin{proof}
	The functions $\{   \psi_{j,k,\ell}   \}$ are of the form $f(x) M_{k}(x)$ or $f(x) \ux M_{k}(x)$ with $f(x)= f_0(|x|)$ a real-valued radial function in $\cS(\mR^m)$.
	Hence we can apply \hyperref[Bochner]{Proposition~\ref*{Bochner}} and we find, using $\lambda= (m-2)/2$, 
	\[
	(T \psi_{2p,k,\ell}) (y) = \left(  \alpha_{k} - {k}\, \beta_k \right)  M_{k}^{(\ell)}({\eta})
	\int_{0}^{\infty} r^{2\lambda+1+k}  L_{p}^{k+\lambda}( r^{2}) \,  e^{- r^{2}/2}   z^{-\lambda} J_{k + \lambda}(z)  \, \mathrm{d}r.
	\]
	Substituting $z= r |y|$, the integral becomes
	\[
	\int_{0}^{\infty}r^{2\lambda+1+k}  L_{p}^{k+\lambda}(r^{2}) \  e^{- r^{2}/2}   (r |y|)^{-\lambda} J_{k + \lambda}(r |y|)  \, \mathrm{d} r.
	\]
	Now we can apply the identity \eqref{lagbes} 
	to give the final result of 
	\begin{align*}
	(T \psi_{2p,k,\ell}) (y) &=  (-1)^{p}\left( \alpha_{k} - {k} \, \beta_{k} \right)  M_{k}^{(\ell)}(y)  L_{p}^{k+\lambda}( |y|^{2}) e^{-|y|^{2}/2} \\
	&=  {(-1)^{p}}  \left( \alpha_{k} - {k} \, \beta_{k} \right) \psi_{2p,k,\ell}(y).
	\end{align*}
	The expression for $(T\psi_{2p+1,k,\ell}) (y)$ follows similarly.
\end{proof}

Using these results we finally arrive at:

\begin{proof}[Proof of {\hyperref[intrC]{Proposition~\ref*{intrC}}}]
	Denote the eigenvalues of $T$ by $  \mu_{j,k}$ for ${j,k\in\mZ_{\geq0}}$. 
	From \hyperref[eigenvC]{Lemma~\ref*{eigenvC}}, we see that these eigenvalues satisfy
	\[
	\mu_{j+2,k} = -\mu_{j,k}.
	\]
	Moreover, as $\mu_{j+1,k} = i\, \mu_{j,k}$ we find that putting 
	\begin{equation*}
	\alpha_{k}   =\frac{1}{2(k+\lambda)} \big(i \, k \, \mu_{0,k-1}+(k+2\lambda) \mu_{0,k}\big),\qquad
	\beta_{k}
	= 
	\frac{ 1}{2(k+\lambda)}\big(i \, \mu_{0,k-1}- \mu_{0,k}\big)
	\end{equation*}
	in \eqref{eigenvalue eq}, gives an integral transform that coincides with $T$ on the eigenfunction basis.
\end{proof}

We conclude with a note relevant to the space $L^{2}(\mathbb{R}^m) \otimes \cC l_{m}$ in \hyperref[theoUPC]{Theorem~\ref*{theoUPC}}. 
This space is equipped with the inner product 
\begin{equation}\label{IPC}
\langle f, g \rangle = \left[ \int_{\mR^{m}} \overline{f^{c}} \, g \, \mathrm{d}x \right]_{0}.
\end{equation}
Here, $u \mapsto \bar{ u }$ is the anti-involution on the Clifford algebra $\cC l_{m}$ defined by
\begin{equation*}
\overline{uv} = \overline{v} \, \overline{u} 
\qquad  \mbox{and} \qquad
\overline{e_{j}} = -e_{j} \quad (j= 1,\ldots, m).
\end{equation*}
Furthermore, 
$f^{c}$ denotes the complex conjugate of the function $f$ and $u \mapsto [u]_{0}$ is the projection on the space of $0$-vectors (scalars). The functions $\{\psi_{j,k,\ell}\}$ defined in formula (\ref{basis}) are after suitable normalization an orthonormal basis for $L^{2}(\mathbb{R}^m) \otimes \cC l_{m}$ (see e.g.~\cite{MR926831}), satisfying
\[
\langle \psi_{j_{1},k_{1}, \ell_{1}} , \psi_{j_{2},k_{2}, \ell_{2}} \rangle = \delta_{j_{1} j_{2}} \delta_{k_{1} k_{2}}\delta_{\ell_{1} \ell_{2}}.
\]

\end{document}